\documentclass[10pt]{article}

\usepackage{amsmath}    
\usepackage{amsthm}     
\usepackage{amssymb}    
\usepackage{geometry}   
\usepackage{amssymb}
\usepackage{stmaryrd}
\usepackage{xcolor}
\usepackage{hyperref}
\usepackage{graphicx}
\usepackage{algorithm}
\usepackage{subcaption}
\usepackage{algpseudocode}
\usepackage{authblk}
\usepackage[normalem]{ulem}
\DeclareMathOperator*{\argmax}{arg\,max}
\DeclareMathOperator*{\argmin}{arg\,min}

\geometry{a4paper, margin=1in}

\newtheorem{theorem}{Theorem}[section]
\newtheorem{lemma}[theorem]{Lemma}

\newtheorem{corollary}[theorem]{Corollary}

\newtheorem{example}[theorem]{Example}


\begin{document}

\title{Information-theoretic \textcolor{black}{coordinate} subset \textcolor{black}{and partition} selection of multivariate Markov chains via submodular optimization}
\author[1]{Zheyuan Lai\thanks{Email: zheyuan\_lai@u.nus.edu}}
\author[1]{Michael C.H. Choi\thanks{Email: mchchoi@nus.edu.sg, corresponding author}}
\affil[1]{Department of Statistics and Data Science, National University of Singapore, Singapore}
\date{\today}
\maketitle

\begin{abstract}
We study the problem of optimally projecting the transition matrix of a finite ergodic multivariate Markov chain onto a lower-dimensional state space, \textcolor{black}{as well as the problem of finding an optimal partition of coordinates such that the factorized Markov chain gives minimal information loss compared to the original multivariate chain.} Specifically, we seek to construct a Markov chain that optimizes various information-theoretic criteria under cardinality constraints. These criteria include entropy rate, information-theoretic distance to factorizability, independence, and stationarity. We formulate these tasks as best subset \textcolor{black}{or partition} selection problems over multivariate Markov chains and leverage the \textcolor{black}{($k$-)}submodular (or \textcolor{black}{($k$-)}supermodular) structures of the objective functions to develop efficient greedy-based algorithms with theoretical guarantees. \textcolor{black}{Along the way,} we introduce a generalized version of the distorted greedy algorithm, which may be of independent interest. Finally, we illustrate the theory and algorithms through extensive numerical experiments with publicly available code on multivariate Markov chains associated with the Bernoulli--Laplace and Curie--Weiss \textcolor{black}{models}.
\newline  
\textbf{Keywords}: Markov chains, submodularity, greedy algorithms, Kullback-Leibler divergence \newline
\textbf{AMS 2020 subject classification}: 60J10, 60J22, 90C27, 94A15, 94A17
\end{abstract}

\section{Introduction}

Consider a multivariate ergodic Markov chain with transition matrix $P$ that admits a stationary distribution $\pi$ on a finite product state space $\mathcal{X}$ with $d \in \mathbb{N}$ coordinates. Given a subset $S \subseteq \{1,\ldots,d\}$, let $P^{(S)}$ denote the projected transition matrix onto $S$, and let $\Pi$ be the transition matrix where each row is given by $\pi$. These notations are formally defined in Section \ref{sec:prelim} and \ref{sec:dist2stat}.  

A number of natural and interesting information-theoretic optimization problems arise in the context of subset selection for multivariate Markov chains. For example, among all subsets $S$ with $|S| \leq m$, which projected transition matrix $P^{(S)}$ maximizes the entropy rate, thereby identifying the most ``random'' coordinates? Similarly, which choice of $S$ minimizes the Kullback-Leibler (KL) divergence between $P^{(S)}$ and $\Pi^{(S)}$, making it closest to stationarity and highlighting subsets nearest to equilibrium? 
\textcolor{black}{More generally, which choice of partition $\mathbf{S}$ of coordinates approximates a complex transition matrix $P$ by a simpler model that factorizes the groups of coordinates with respect to $\mathbf{S}$ with minimal information losses?}
These insights have direct applications in Markov chain Monte Carlo (MCMC), where selecting and analyzing optimal subsets or \textcolor{black}{partitions} may lead to the design of accelerated MCMC samplers. \textcolor{black}{For example, in Section \ref{subsec:MCMC} below, we make use of the identified subset from the output of submodular optimization algorithms to propose a heuristics that leads to an improved sampler for the Curie--Weiss model.}

Model reduction problems for Markov chains have been studied previously using spectral theory \cite{DMM11}. While such approach is powerful, combinatorial approaches to subset \textcolor{black}{or partition} selection in the context of Markov chains remain underexplored, a gap we address in this paper by leveraging submodular optimization.
\textcolor{black}{Spectral approaches rely on the knowledge of eigenvalues and eigenvectors of the underlying transition matrices, which might not be tractable especially under high-dimensional settings. On the other hand, the combinatorial approach given in this paper relies on the knowledge of the transition matrix, which is usually known or has a tractable structure in most models of interests.}
Our work builds on recent efforts, such as \cite{choi2024ratedistortionframeworkmcmcalgorithms}, and adapts a combinatorial lens to develop efficient greedy-based algorithms for finding approximate optimizers.
\textcolor{black}{In this work, we are interested in selecting subset or partition of the coordinates of multivariate Markov chains, which is different from aggregating the states of Markov chains in the investigation of lumpability \cite{GT14}.}

{\color{black}Our main contributions are as follows. Throughout, we use several information-theoretic ``distance'' functionals that compare the original chain to a reference model: \emph{distance to stationarity} compares a projected Markov chain transition matrix and its projected stationary kernel, \emph{distance to independence} quantifies how far a multivariate Markov chain is from behaving independently in an information-theoretic sense, and \emph{distance to factorizability} measures the information loss when approximating a multivariate Markov chain by a model that factorizes according to a coordinate partition. Precise definitions are given in Sections \ref{sec:prelim} and \ref{sec:dist2stat}.

\begin{itemize}
    \item \textbf{Identification of new submodular functions in Markov chain theory.} 
    We show that several natural set functionals arising from the above information-theoretic distances exhibit a \emph{diminishing returns} (i.e. submodular) or \emph{increasing returns} (i.e. supermodular) structure: under suitable assumptions, the distance-to-stationarity functional is \emph{supermodular}, the distance-to-stationarity functional for the complement set is \emph{submodular}, whereas the distance-to-independence functional for the complement set is \emph{submodular}. This extends the line of work initiated in \cite{choi2024ratedistortionframeworkmcmcalgorithms}.  
    
    \item \textbf{Greedy subset/partition selection for Markov chains via submodular optimization.} 
    Leveraging the above diminishing returns properties, we adapt existing greedy-style algorithms with theoretical guarantees to approximately optimize these information-theoretic objectives under practical constraints (e.g., a budget on the number of selected coordinates, or a prescribed number of groups in a partition). To the best of our knowledge, this is the first systematic use of greedy submodular optimization for subset/partition selection problems tailored to multivariate Markov chains.  
    
    \item \textbf{A generalized distorted greedy algorithm for $k$-submodular maximization under cardinality constraints.} 
    We generalize the distorted greedy algorithm of \cite{pmlr-v97-harshaw19a} into $k$-submodular setting, where the decision variable is a \emph{partition} of coordinates up to $k$ labeled groups, and we provide theoretical guarantees under cardinality-type constraints. This algorithmic result is of independent interest in $k$-submodular maximization problems beyond the Markov chain applications considered here.
    
    \item \textbf{Numerical validation on structured multivariate Markov chains.} 
    We conduct experiments on multivariate Markov chains associated with the Bernoulli--Laplace level model and the Curie--Weiss model, demonstrating that the proposed greedy procedures efficiently identify subsets/partitions that optimizes the targeted information-theoretic criteria.  
\end{itemize}}

{\color{black}
\paragraph{Relation to \cite{choi2024ratedistortionframeworkmcmcalgorithms}.}
Although \cite{choi2024ratedistortionframeworkmcmcalgorithms} also considers information-theoretic distances to factorizability or independence, in this paper we tackle this and other related problems from a submodular optimization perspective which is fundamentally different.
In \cite{choi2024ratedistortionframeworkmcmcalgorithms}, the goal is to \emph{construct or approximate} a transition matrix that is ``closest'' to a given multivariate chain under structural constraints (such as factorization). 
In contrast, in this manuscript we \emph{do not modify the original chain} \(P\). Instead, we treat \(P\) as fixed and perform \emph{combinatorial selection} over the coordinate set: we choose a subset \(S\subseteq\{1,\ldots,d\}\) (or a partition \(\mathbf{S}\)) and evaluate the induced projected (or factorized-by-partition) dynamics. 
This leads to a different class of problems, that is to optimize information-theoretic criteria \emph{over subsets/partitions} subject to natural cardinality-type constraints. 
Examples include selecting coordinates whose projection has large entropy rate, or selecting subsets/partitions whose induced dynamics are closest (in Kullback-Leibler divergence) to stationarity, independence, or factorizability. 
To the best of our knowledge, this subset/partition-selection formulation and its greedy submodular optimization approach have not been developed in the Markov chain literature.}

The remainder of this paper is organized as follows. \textcolor{black}{Section \ref{subsec:infotheoryMC} discusses the information-theoretic properties of multivariate Markov chains.} Section \ref{subsec:submodular} provides a review of submodularity and $k$-submodularity \textcolor{black}{and some related examples in Markov chain theory}. Section \ref{subsec:distortgreedy} introduces the distorted greedy algorithm and presents our generalized version with theoretical guarantees. We then explore optimization problems related to entropy rate (Section \ref{sec:entropy_rate}), distance to factorizability (Section \ref{sec:dist2fact}), distance to independence (Section \ref{sec:dist2indp}), distance to stationarity (Section \ref{sec:dist2stat}), and distance to factorizability over a fixed set (Section \ref{sec:dist2fact_fixed}). Finally, we illustrate the algorithms through numerical experiments in Section \ref{sec:numericalexp}.  

\section{Preliminaries}\label{sec:prelim}
\subsection{\textcolor{black}{Information theory of multivariate Markov chains}}
\label{subsec:infotheoryMC}

Throughout this paper, we consider a finite $d$-dimensional state space described by $\mathcal{X}= \mathcal{X}^{(1)} \times \ldots \times \mathcal{X}^{(d)}$. For $S\subseteq \llbracket d \rrbracket$, we write $\mathcal{X}^{(S)} = \times_{i \in S} \mathcal{X}^{(i)}$. 
\textcolor{black}{One of our goals is to choose a subset $S \subseteq \llbracket d \rrbracket$ that yields, in suitable senses, an optimal lower-dimensional description of a given multivariate Markov chain.}
We denote by $\mathcal{L}(\mathcal{X})$ to be the set of transition matrices on $\mathcal{X}$, and $\mathcal{P}(\mathcal{X}) = \{\pi;~\min_{x \in \mathcal{X}} \pi(x) > 0\}$ to be the set of probability masses with support on $\mathcal{X}$. 
We say that $P\in \mathcal{L}(\mathcal{X})$ is $\pi$-stationary if it satisfies $\pi = \pi P$.

We now recall the definition of the tensor product of transition matrices and probability masses, see e.g. Exercise 12.6 of \cite{levin2017markov}. 
Define, for $M_l \in \mathcal{L}(\mathcal{X}^{(l)})$, $\pi_l \in \mathcal{P}(\mathcal{X}^{(l)})$, $x^l,y^l \in \mathcal{X}^{(l)}$ for $l \in \{i,j\}, i \neq j \in \llbracket d \rrbracket,$
\begin{align*}
    (M_i \otimes M_j)((x^i, x^j), (y^i, y^j)) &:= M_i(x^i, y^i) M_j(x^j, y^j), \\
    (\pi_i \otimes \pi_j)(x^i, x^j) &:= \pi_i(x^i) \pi_j(x^j).
\end{align*}

A transition matrix $P\in \mathcal{L}(\mathcal{X})$ is said to be in a product form if there exists $M_i \in \mathcal{L}(\mathcal{X}^{(i)})$ for $i\in \llbracket d \rrbracket$ such that \textcolor{black}{$P$ can be expressed as a $d$-fold tensor product, that is,  $P = \otimes_{i=1}^d M_i$.}
\textcolor{black}{A product form $P$ describes a Markov chain where the $d$ coordinates evolve independently and in parallel.}
A probability mass $\pi$ is said to be in a product form if there exists $\pi_i \in \mathcal{P}(\mathcal{X}^{(i)})$ such that $\pi = \otimes_{i=1}^d \pi_i$, \textcolor{black}{which means that the coordinates are independent under $\pi$.}

We then recall the definition of leave-$S$-out and keep-$S$-in transition matrices of a given transition matrix $P$, see Section 2.2 of \cite{choi2024ratedistortionframeworkmcmcalgorithms}. Let \(\pi \in P(\mathcal{X})\), \(P \in \mathcal{L} (\mathcal{X})\), and \(S \subseteq \llbracket d \rrbracket\). For any \((x^{(-S)}, y^{(-S)}) \in \mathcal{X}^{(-S)} \times \mathcal{X}^{(-S)}\), we define the $\textbf{leave}$-$S$-\textbf{out} transition matrix to be $P_\pi^{(-S)}$ with entries given by
    \[
    P_\pi^{(-S)}(x^{(-S)}, y^{(-S)}) :=
    \frac{\sum_{(x^{(S)}, y^{(S)}) \in \mathcal{X}^{(S)} \times \mathcal{X}^{(S)}} \pi(x^1, \dots, x^d) P((x^1, \dots, x^d), (y^1, \dots, y^d))}
    {\sum_{x^{(S)} \in \mathcal{X}^{(S)}} \pi(x^1, \dots, x^d)}.
    \]
The \textbf{keep}-\(S\)-\textbf{in} transition matrix of \(P\) with respect to \(\pi\) is 
    \[
    P_\pi^{(S)} := P_\pi^{(-\llbracket d \rrbracket \setminus S)} \in \mathcal{L}(\mathcal{X}^{(S)}).
    \]
In the special case of \(S = \{i\}\) for \(i \in \llbracket d \rrbracket\), we write
    \[
    P_\pi^{(-i)} = P_\pi^{(-\{i\})}, \quad P_\pi^{(i)} = P_\pi^{(\{i\})}.
    \]
When \(P\) is \(\pi\)-stationary, we omit the subscript $\pi$ and write directly \(P^{(-S)}, P^{(S)}\). We also apply the convention of $P^{(\emptyset)} = P^{(-\llbracket d \rrbracket)} = 1$.

{\color{black}To understand leave-$S$-out and keep-$S$-in intuitively, first let us imagine that we only care about coordinates \emph{outside} $S$ and simply do not track the coordinates in $S$. Then the leave-$S$-out transition matrix is the effective transition probabilities rule for how the remaining coordinates move in one step, obtained by averaging over the forgotten coordinates in $S$ according to the reference distribution $\pi$. On the other hand, suppose we keep only the coordinates in $S$ and treat everything else in $S^c$ as hidden background.
Then the keep-$S$-in transition matrix is the induced one-step evolution law on $S$, again formed by averaging over the hidden background $S^c$ using $\pi$.}

We proceed to recall the Shannon entropy of a probability distribution and the entropy rate of the transition matrix, see Section 1 of~\cite{Polyanskiy_Wu_2025}. \textcolor{black}{For a probability distribution $\pi$ on $\mathcal{X}$}, its \textbf{Shannon entropy} is defined as
    $$H(\pi) := -\sum_{x\in\mathcal{X}} \pi(x) \ln{\pi(x)},$$
where the standard convention of $0 \ln 0 := 0$ applies. For $\pi$-stationary $P\in \mathcal{L}(\mathcal{X})$, the \textbf{entropy rate} of $P$ is defined as
$$H(P) := -\sum_{x\in \mathcal{X}} \sum_{y\in \mathcal{X}} \pi(x) P(x, y) \ln{P(x, y)}.$$

{\color{black}Intuitively, the Shannon entropy $H(\pi)$ of $\pi$ measures the typical ``surprise'' (information content) of a draw from $\pi$. It is large when $\pi$ is spread out (high uncertainty) and small when $\pi$ concentrates on a few states (low uncertainty). Similarly, for a $\pi$-stationary $P$, its entropy rate $H(P)$ is the average one-step randomness in the next state given the current state. It quantifies how unpredictable the trajectory is per time step: deterministic transitions give $H(P)=0$, while more diffusive transition rows yield larger entropy rate.}

We shall also recall the definition of KL divergence between Markov chains (Definition 2.1 of~\cite{choi2024ratedistortionframeworkmcmcalgorithms}) and the distance to independence (Definition 2.2 of~\cite{choi2024ratedistortionframeworkmcmcalgorithms}). For given \(\pi \in \mathcal{P}(\mathcal{X})\) and transition matrices \(M, L \in \mathcal{L} (\mathcal{X})\), we define the \textbf{KL divergence} from \(L\) to \(M\) with respect to \(\pi\) as
\[
D_\mathrm{KL}^{\pi}(M\|L) := \sum_{x \in \mathcal{X}} \pi(x) \sum_{y \in \mathcal{X}} M(x, y) \ln \left( \frac{M(x, y)}{L(x, y)} \right).
\]
where the convention of $0 \ln \frac{0}{a} := 0$ applies for $a \in [0,1]$. 
Note that \(\pi\) need not be the stationary distribution of \(L\) or \(M\). In particular, when \(M,L\) are assumed to be \(\pi\)-stationary, we write \(D(M\|L) := D_\mathrm{KL}^{\pi}(M\|L)\), which can be interpreted as the KL divergence rate from \(L\) to \(M\). Given $P\in \mathcal{L}(\mathcal{X})$, we define the \textbf{distance to independence} of $P$ with respect to $D_\mathrm{KL}^\pi$ to be $$\mathbb{I}^\pi (P):= \min_{L_i \in \mathcal{L}(\mathcal{X}^{(i)}),~ \forall i \in \llbracket d \rrbracket} D_\mathrm{KL}^\pi (P \| \otimes_{i=1}^d L_i) = D^\pi_\mathrm{KL}(P \| \otimes_{i=1}^d P_\pi^{(i)}).$$
We write $\mathbb{I} (P) = \mathbb{I}^\pi (P)$ if $P$ is $\pi$-stationary.

{\color{black}Intuitively, the KL divergence $D^\pi_\mathrm{KL}(M \| L)$ from $L$ to $M$ measures the row-wise discrepancies of $M,L$ as weighted by $\pi$. The quantity $\mathbb{I}^\pi(P)$ measures how far a multivariate $P$ is from behaving like $d$ independent coordinate-wise Markov chain, in the sense of KL divergence under $\pi$. The minimizer $\otimes_{i=1}^d P_\pi^{(i)}$ is the closest product approximation to $P$ obtained by matching each coordinate's induced dynamics, so $\mathbb{I}^\pi(P)=0$ exactly when the transition factorizes as $P=\otimes_{i=1}^d P_\pi^{(i)}$.}

{\color{black}One major difference between this manuscript and \cite{choi2024ratedistortionframeworkmcmcalgorithms} is that in the latter, one is interested in a single minimization problem of seeking the closest product form Markov chain, that is,
\begin{align*}
    \mathbb{I}^\pi (P) = \min_{L_i \in \mathcal{L}(\mathcal{X}^{(i)}),~ \forall i \in \llbracket d \rrbracket} D_\mathrm{KL}^\pi (P \| \otimes_{i=1}^d L_i),
\end{align*}
while in Section \ref{sec:dist2indp} of this manuscript, we shall investigate a double minimization problem where we seek a projected Markov chain being closest to independence with given cardinality constraint, that is, for $m \in \llbracket d \rrbracket$,
\begin{align*}
    \min_{S\subseteq \llbracket d \rrbracket;~ |S| = m} \mathbb{I}^{\pi^{(S)}}(P^{(S)}) &= \min_{S\subseteq \llbracket d \rrbracket;~ |S| = m} \min_{L_i \in \mathcal{L}(\mathcal{X}^{(i)}),~ \forall i \in S} D^{\pi^{(S)}}_\mathrm{KL}(P^{(S)} \| \otimes_{i \in S} L_i)\\
    &= \min_{S\subseteq \llbracket d \rrbracket;~ |S| = m} D^{\pi^{(S)}}_\mathrm{KL}(P^{(S)} \| \otimes_{i \in S} P^{(i)}).
\end{align*}}

Finally, we recall the partition lemma for KL divergence of Markov chains, \textcolor{black}{which shows that the KL divergence between the original multivariate Markov chains is at least as large as the projected coordinate subset of the two chains.}
\begin{theorem}[Partition lemma \textcolor{black}{(Theorem 2.4 of \cite{choi2024ratedistortionframeworkmcmcalgorithms})}]\label{thm:part_lem}
    Let $\pi \in \mathcal{P}(\mathcal{X})$, $P, L \in \mathcal{L}(\mathcal{X})$ and suppose $S \subseteq \llbracket d \rrbracket$, we have:
    $$D^\pi_\mathrm{KL}(P \| L) \geq D^{\pi^{(S)}}_\mathrm{KL} (P^{(S)} \| L^{(S)}).$$
\end{theorem}

\subsection{Definition and properties of submodular functions}\label{subsec:submodular}

We first recall the definition of a submodular function \cite{MR3549595}. Given a finite nonempty ground set \( U \), a set function \( f : 2^U \to \mathbb{R} \)  defined on subsets of \( U \) is called \textbf{submodular} if for all \( S, T \subseteq U \),
    \[
    f(S) + f(T) \geq f(S \cap T) + f(S \cup T).
    \]
$f$ is said to be \textbf{supermodular} if $-f$ is submodular, and $f$ is said to be \textbf{modular} if $f$ is both submodular and supermodular.

Next, we recall a result that states the complement of a submodular function is still submodular:

\begin{lemma}\label{lem:complement_submodularity}
    If $S \mapsto f(S)$ is submodular, then $S\mapsto f(U \backslash S)$ is submodular.
\end{lemma}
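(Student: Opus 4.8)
The plan is to prove Lemma~\ref{lem:complement_submodularity} directly from the defining inequality of submodularity, using the fact that complementation $S \mapsto U \setminus S$ is an involution that interchanges unions and intersections. Define $g : 2^U \to \mathbb{R}$ by $g(S) = f(U \setminus S)$; the goal is to show $g(S) + g(T) \geq g(S \cap T) + g(S \cup T)$ for all $S, T \subseteq U$.

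First I would record the two De Morgan identities: $U \setminus (S \cap T) = (U \setminus S) \cup (U \setminus T)$ and $U \setminus (S \cup T) = (U \setminus S) \cap (U \setminus T)$. Then I would write out $g(S \cap T) + g(S \cup T) = f(U \setminus (S \cap T)) + f(U \setminus (S \cup T)) = f\big((U\setminus S) \cup (U \setminus T)\big) + f\big((U \setminus S) \cap (U \setminus T)\big)$. Now set $A = U \setminus S$ and $B = U \setminus T$; the right-hand side is exactly $f(A \cup B) + f(A \cap B)$, which by the submodularity of $f$ applied to the pair $A, B$ is at most $f(A) + f(B) = f(U \setminus S) + f(U \setminus T) = g(S) + g(T)$. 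Chaining these (in)equalities gives $g(S \cap T) + g(S \cup T) \leq g(S) + g(T)$, which is the claim.

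There is essentially no obstacle here: the only thing to be careful about is the direction of the submodularity inequality and making sure the substitution $A = U\setminus S$, $B = U \setminus T$ is applied to the correct terms (the complement of the intersection becomes a union and vice versa, so the roles of the two sides of the inequality are preserved, not swapped). I would also note in passing that the same argument shows complementation preserves supermodularity and modularity, since it is just $f \mapsto -f$ composed with the map in the lemma, though this is not needed for the statement.
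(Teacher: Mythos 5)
Your proof is correct. It differs mildly from the paper's: you argue directly from the lattice form of submodularity, $f(S)+f(T)\geq f(S\cap T)+f(S\cup T)$, using De Morgan's identities and the substitution $A=U\setminus S$, $B=U\setminus T$, whereas the paper verifies the diminishing-returns characterization, showing that for $S\subseteq T$ and $e\notin T$ the marginal gain $f(U\setminus(S\cup\{e\}))-f(U\setminus S)$ dominates the corresponding gain at $T$, by rewriting both as increments of $f$ over the nested pair $U\setminus T\subseteq U\setminus S$. Your route has the small advantage of working verbatim with the definition the paper states (no appeal, even implicit, to the equivalence between the lattice inequality and the marginal-gain property), and the De Morgan duality makes the symmetry of the statement transparent; the paper's route is the one more commonly convenient when complementation is later combined with monotonicity or greedy-increment arguments, as in the subsequent $k$-submodularity lemmas. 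Your closing remark that the same argument preserves supermodularity and modularity is also correct, since it amounts to applying the lemma to $-f$.
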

\begin{proof}
    We choose $S\subseteq T\subseteq U$ and $e\in U\backslash T$, then
    \begin{align*}
        \big(f(U &\backslash (S\cup \{e\})) - f(U \backslash S)\big) - \big(f(U \backslash (T\cup \{e\})) - f(U \backslash T)\big) \\
        &= \big(f(U \backslash T) - f(U \backslash (T\cup \{e\}))\big) - \big(f(U \backslash S) - f(U \backslash (S\cup \{e\}))\big) \geq 0
    \end{align*}
    since $S \mapsto f(S)$ is submodular and $U\backslash T \subseteq U\backslash S$, and hence $S\mapsto f(U \backslash S)$ is submodular.
\end{proof}

We call a \textcolor{black}{set} function $f: 2^U \to \mathbb R$ \textbf{symmetric} if $f(A) = f(U\backslash A)$ for all $A\subseteq U$. \textcolor{black}{A set function \( f \) is said to be \textbf{monotonically non-decreasing} (resp.~\textbf{non-increasing}) if  
    \[
    f(S) \leq (\textrm{resp.}\, \geq)\,  f(T) \quad \forall \,S \subseteq T \subseteq U.
    \]}

\textcolor{black}{We proceed to recall some examples of submodular functions in multivariate Markov chains as discussed in Proposition 2.6 of~\cite{choi2024ratedistortionframeworkmcmcalgorithms}. These submodular or supermodular structures motivate us to design and apply submodular optimization algorithms for seeking optimal subset or partition of the coordinates of Markov chains in subsequent sections.}

\begin{theorem}[\textcolor{black}{Submodularity of some functions in Markov chain theory (Proposition 2.6 of \cite{choi2024ratedistortionframeworkmcmcalgorithms})}]\label{thm:submod_mc}
Let \( S \subseteq \llbracket d\rrbracket \). Let \( P \in \mathcal{L}(\mathcal{X}) \) be a $\pi$-stationary transition matrix. We have
\begin{itemize}
    \item (Submodularity of the entropy rate of \( P \)) The mapping \( S \mapsto H(P^{(S)}) \) is submodular.
    \item (Submodularity of the distance to \( (S, \llbracket d \rrbracket \backslash S) \)-factorizability of \( P \)) The mapping \( S \mapsto D(P\|P^{(S)} \otimes P^{(-S)}) \) is submodular.
    \item (Supermodularity and monotonicity of the distance to independence) The mapping \( S \mapsto \mathbb I(P^{(S)}) \) is monotonically non-decreasing and supermodular.
\end{itemize}
\end{theorem}

{\color{black}Intuitively, submodularity (resp.~supermodularity) captures the idea of diminishing (resp.~increasing) returns. $f$ is submodular (resp.~supermodular) if adding an item to a set gives less (resp.~more) additional value of $f$ when the set is already large than when the set is small. In the context of Markov chains and in view of Theorem \ref{thm:submod_mc}, we see that, for $S \subseteq T \subseteq \llbracket d \rrbracket$ and $i \notin T$, we have
\begin{align*}
    H(P^{(S \cup \{i\})}) - H(P^{(S)}) &\geq H(P^{(T \cup \{i\})}) - H(P^{(T)}), \\
    D(P\|P^{(S \cup \{i\})} \otimes P^{(-(S \cup \{i\}))}) - D(P\|P^{(S)} \otimes P^{(-S)}) &\geq D(P\|P^{(T \cup \{i\})} \otimes P^{(-(T \cup \{i\}))}) - D(P\|P^{(T)} \otimes P^{(-T)}), \\
    \mathbb I(P^{(S \cup \{i\})}) - \mathbb I(P^{(S)}) &\leq \mathbb I(P^{(T \cup \{i\})}) - \mathbb I(P^{(T)}).
\end{align*}
In words, adding a coordinate $i$ to a lower-dimensional transition matrix $P^{(S)}$ gives at least as high a marginal gain of entropy rate as adding to a higher-dimensional transition matrix $P^{(T)}$. Similarly, adding a coordinate $i$ to a transition matrix $P$ gives at least as high a marginal gain of distance to $(S,\llbracket d \rrbracket \backslash S)$-factorizability as adding to the distance to $(T,\llbracket d \rrbracket \backslash T)$-factorizability. Finally, the marginal gain of distance to independence of adding a coordinate $i$ to a lower-dimensional transition matrix $P^{(S)}$ gives at most the marginal gain of adding the same coordinate to a higher-dimensional transition matrix $P^{(T)}$. Furthermore, the distance to independence is monotonically non-decreasing, meaning that the larger the set is, the further away from independence of the multivariate Markov chain projection:
\begin{align*}
    \mathbb I(P^{(S)}) \leq \mathbb I(P^{(T)}).
\end{align*}
}

Next, we investigate the map \( S \mapsto \mathbb I(P^{(-S)}) \), and show that it is monotonically non-increasing and supermodular.

\begin{theorem}[\textcolor{black}{Supermodularity and monotonicity of the distance to independence of $P^{(-S)}$}]\label{thm:dist2indp_complement}
    The mapping $S \mapsto \mathbb{I} (P^{(-S)})$ is monotonically non-increasing and supermodular.
\end{theorem}

\begin{proof}
    We first prove the monotonicity. Suppose $S \subseteq T \subseteq \llbracket d \rrbracket$, then $\llbracket d \rrbracket \backslash T \subseteq \llbracket d \rrbracket \backslash S$, hence according to the partition lemma (\textcolor{black}{Theorem} \ref{thm:part_lem}), we have:
    $$\mathbb I(P^{(-S)}) = D(P^{(-S)} \| \otimes_{i\in \llbracket d \rrbracket \backslash S} P^{(i)}) \geq D(P^{(-T)} \| \otimes_{i\in \llbracket d \rrbracket \backslash T} P^{(i)}) = \mathbb I(P^{(-T)}),$$ therefore, $S \mapsto \mathbb{I} (P^{(-S)})$ is monotonically non-increasing.

    We then look into the supermodularity of this map. Since $$\mathbb I(P^{(-S)}) = \sum_{i \in \llbracket d \rrbracket \backslash S} H(P^{(i)}) - H(P^{(-S)}),$$ then $\mathbb I(P^{(-S)})$ is supermodular because $H(P^{(-S)})$ is submodular in view of Lemma \ref{lem:complement_submodularity}.
\end{proof}

{\color{black}
    A classical method of submodular optimization is the heuristic greedy algorithm (Section 4 of \cite{MR503866}). However, its $(1 - e^{-1})$-approximation guarantee requires that the objective function is monotonically non-decreasing. For our example of submodular structures in Markov chain theory, we note that the maps $S \mapsto H(P^{(S)})$ and $S \mapsto D(P \| P^{(S)} \otimes P^{(-S)})$ are not monotone. This motivates us to introduce the following result, that we shall apply in subsequent sections, to transform a non-monotone submodular $f$ to a monotonically non-decreasing submodular $g$:
    
    \begin{theorem}[Transform a non-monotone submodular $f$ to a monotone submodular $g$ (Proposition 14.18 of \cite{korte2011combinatorial})] \label{thm:monotonize}
        Let $f: 2^U \to \mathbb{R}$ be a submodular function and $\beta \in \mathbb R$, then $g: 2^U \to \mathbb R$ defined by 
        $$g(S) := f(S) -\beta + \sum_{e \in S}(f(U\backslash \{e\}) - f(U))$$
        is submodular and monotonically non-decreasing.
    \end{theorem}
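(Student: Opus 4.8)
The plan is to verify the two claimed properties of $g$ separately. First, submodularity: since $g$ is obtained from the submodular $f$ by subtracting the constant $\beta$ and adding the term $\sum_{e\in S}(f(U\backslash\{e\}) - f(U))$, I would observe that subtracting a constant preserves submodularity trivially, and that $S \mapsto \sum_{e\in S} c_e$ is a modular (linear) function for any fixed coefficients $c_e := f(U\backslash\{e\}) - f(U)$. Since the sum of a submodular function and a modular function is submodular, $g$ is submodular. This step is essentially immediate.

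The substantive part is monotonicity. Here I would use the diminishing-returns characterization: it suffices to show $g(S\cup\{e\}) - g(S) \geq 0$ for every $S \subseteq U$ and every $e \notin S$. Expanding using the definition, the constant $\beta$ cancels, and the marginal gain of the modular part is exactly $c_e = f(U\backslash\{e\}) - f(U)$, so
\[
g(S\cup\{e\}) - g(S) = \big(f(S\cup\{e\}) - f(S)\big) + \big(f(U\backslash\{e\}) - f(U)\big).
\]
Now I would invoke submodularity of $f$ applied to the pair of sets $S\cup\{e\}$ and $U\backslash\{e\}$: their union is $U$ and their intersection is $S$ (using $e\notin S$ and $S \subseteq U$), so $f(S\cup\{e\}) + f(U\backslash\{e\}) \geq f(U) + f(S)$, which rearranges to exactly the statement that the displayed marginal gain is nonnegative. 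Hence $g$ is monotonically non-decreasing.

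The main obstacle — really the only thing requiring care — is choosing the right pair of sets to feed into the submodular inequality; the choice $\{S\cup\{e\},\, U\backslash\{e\}\}$ is what makes the telescoping work, and one must check the intersection/union identities hold precisely because $e\notin S$. Everything else is bookkeeping: cancellation of $\beta$ and recognizing the modular correction term's marginal increment. I would present the argument in this order (submodularity first as a one-liner, then the marginal-gain computation and the key application of submodularity of $f$), since it keeps the only nontrivial inequality isolated and clearly motivated.
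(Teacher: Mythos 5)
Your proof is correct, and it is essentially the standard argument: the paper itself only cites this result (Proposition 14.18 of Korte--Vygen) without proof, but its proof of the $k$-submodular generalization (Theorem \ref{thm:monotonize-k}) uses exactly your idea, namely that the marginal gain of $f$ at $S$ dominates its marginal gain at $U\backslash\{e\}$ by diminishing returns, which is equivalent to your application of the submodular inequality to the pair $S\cup\{e\}$ and $U\backslash\{e\}$. No gaps; the modular-plus-submodular observation for submodularity of $g$ is also fine.
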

}

A multivariate generalization of submodularity is known as $k$-submodularity \cite{ene2022streaming} where $k \in \mathbb{N}$. In particular, $1$-submodular function is equivalent to submodular function. Let \( f : (k + 1)^U \to \mathbb{R} \) be a set function. The function \( f \) is said to be \( k \)-\textbf{submodular} if  
\[
f(\mathbf{S}) + f(\mathbf{T}) \geq f(\mathbf{S} \sqcap \mathbf{T}) + f(\mathbf{S} \sqcup \mathbf{T}) \quad \forall \, \mathbf{S}, \mathbf{T} \in (k + 1)^U,
\]
where \(\mathbf{S} \sqcap \mathbf{T}\) is the \( k \)-tuple whose \( i \)-th set is \( S_i \cap T_i \) and \(\mathbf{S} \sqcup \mathbf{T}\) is the \( k \)-tuple whose \( i \)-th set is \((S_i \cup T_i) \setminus \left(\bigcup_{j \neq i} (S_j \cup T_j)\right)\). A function $f$ is said to be $k$-\textbf{supermodular} if $-f$ is $k$-submodular.

For $\mathbf{S} = (S_1, \ldots, S_k),\mathbf T = (T_1, \ldots, T_k) \in (k+1)^U$, we write $\mathbf S \preceq \mathbf T$ if and only if $S_i \subseteq T_i$ $\forall i \in \llbracket k \rrbracket$, where $\llbracket k \rrbracket := \{1,2,\ldots,k\}.$ A function \( f \) is said to be \textbf{monotonically non-decreasing} (resp.~\textbf{non-increasing}) if  
    \[
    f(\mathbf{S}) \leq (\textrm{resp.}\, \geq)\,  f(\mathbf{T}) \quad \forall \,\mathbf{S} \preceq \mathbf{T}.
    \]

{\color{black}
Intuitively, $k$-submodularity (resp. $k$-supermodularity) serves as a natural generalization to multivariate settings of submodular (resp. supermodular) functions, which exhibits the similar property of ``diminishing (resp. increasing) returns''.
The following sensor placement problem serves as an example of $k$-submodularity.
\begin{example}[Sensor placement problem (Section 5.2 of \cite{NIPS2015_f770b62b})] \label{eg:sensor_placement}
    Let $V$ be a finite set of locations and let $\llbracket k \rrbracket$ be the indices of $k$ kinds of sensors. For each $e\in V$ and $i\in\llbracket k\rrbracket$, let $X_e^i$ be a discrete random variable representing the observation collected by placing a sensor of type $i$ at location $e$, and let \[\Omega := \{X_e^i : e\in V,\ i\in \llbracket k\rrbracket\}.\]
    A sensor placement is encoded by a labeling $x\in\{0,1,\dots,k\}^V$, where $x(e)=0$ means that no sensor is placed at $e$, and $x(e)=i\in \llbracket k\llbracket$ means that a sensor of type $i$ is placed at $e$. For such an $x$, define
\[
    \mathcal{X}(x) := \{X_e^{x(e)} : e\in V,\ x(e)\neq 0\}\quad\text{and}\quad f(x) := H(\mathcal{X}(x)).
\]
    Here, the function $f$ is nonnegative, monotone, and $k$-submodular on $(k+1)^V$.
\end{example}
}

Let \(\Delta_{e, i} f(\mathbf{S})\) be the marginal gain of adding \(e\) to the \(i\)-th set of \(\mathbf{S}\):
    \[
    \Delta_{e, i} f(\mathbf{S}) := f(S_1, \ldots, S_i \cup \{e\}, \ldots, S_k) - f(S_1, \ldots, S_i, \ldots, S_k).
    \]
    Note that \( f \) being monotonically non-decreasing is equivalent to \( \Delta_{e,i} f(\mathbf{S}) \geq 0 \) for all \( \mathbf S \in (k+1)^U \), \( i \in \llbracket k\rrbracket \), and \( e \notin \mathrm{supp}(\mathbf{S}) \), where we define $\mathrm{supp}(\mathbf{S}) := \cup_{i=1}^k S_i$.
A function \( f \) is said to be \textbf{pairwise monotonically non-decreasing} (resp.~\textbf{non-increasing}) if  
    \[
    \Delta_{e, i} f(\mathbf S) + \Delta_{e, j} f(\mathbf S) \geq (\textrm{resp.}\, \leq)\, 0
    \]
    for all \( \mathbf S \in (k+1)^U \), \( e \notin \mathrm{supp}(\mathbf S) \), and \( i, j \in \llbracket k\rrbracket \) such that \( i \neq j \).
A function \( f \) is said to be \textbf{orthant submodular} (resp.~\textbf{orthant supermodular}) if
\begin{align}\label{eq:ort_submod}
    \Delta_{e,i}f(\mathbf S) \geq (\textrm{resp.}\, \leq) \Delta_{e,i}f(\mathbf T)
\end{align}
for all \( i \in \llbracket k\rrbracket \) and \( \mathbf S, \mathbf T \in (k+1)^U \) such that \( \mathbf S \preceq \mathbf T \), \( e \notin \mathrm{supp}(\mathbf T) \).

The following result that we recall characterizes $k$-submodularity.

\begin{theorem}[Characterization of $k$-submodularity (Theorem $7$ of \cite{MR3549595})]\label{thm:k-submod}
    A function $f$ is $k$-submodular (resp.~$k$-supermodular) if and only if $f$ is both orthant submodular (resp.~supermodular) and pairwise monotonically non-decreasing (resp.~non-increasing).
\end{theorem}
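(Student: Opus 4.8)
Throughout, since $f$ is $k$-supermodular precisely when $-f$ is $k$-submodular, and the map $f\mapsto -f$ interchanges orthant submodularity with orthant supermodularity and pairwise monotone non-decreasing with pairwise monotone non-increasing, it suffices to prove the $k$-submodular case. For the ``only if'' direction, the plan is to feed singleton modifications into the defining inequality $f(\mathbf S)+f(\mathbf T)\ge f(\mathbf S\sqcap\mathbf T)+f(\mathbf S\sqcup\mathbf T)$. To recover orthant submodularity, fix $\mathbf S\preceq\mathbf T$, $i\in\llbracket k\rrbracket$ and $e\notin\mathrm{supp}(\mathbf T)$, and apply the inequality to $\mathbf S^{(i)}:=(S_1,\dots,S_i\cup\{e\},\dots,S_k)$ and $\mathbf T$; using that the sets of each tuple are pairwise disjoint and that $e\notin\mathrm{supp}(\mathbf T)$, one computes $\mathbf S^{(i)}\sqcap\mathbf T=\mathbf S$ and $\mathbf S^{(i)}\sqcup\mathbf T=(T_1,\dots,T_i\cup\{e\},\dots,T_k)$, so the inequality rearranges to $\Delta_{e,i}f(\mathbf S)\ge\Delta_{e,i}f(\mathbf T)$. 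To recover pairwise monotonicity, fix $\mathbf S$, $e\notin\mathrm{supp}(\mathbf S)$ and $i\ne j$, and apply the inequality to $\mathbf S^{(i)}$ and $\mathbf S^{(j)}$; both meet and join collapse to $\mathbf S$, giving $\Delta_{e,i}f(\mathbf S)+\Delta_{e,j}f(\mathbf S)\ge 0$.

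For the ``if'' direction, assume $f$ is orthant submodular and pairwise monotone non-decreasing; the goal is $f(\mathbf S)+f(\mathbf T)\ge f(\mathbf S\sqcap\mathbf T)+f(\mathbf S\sqcup\mathbf T)$ for all $\mathbf S,\mathbf T$. First I would reduce to the case $\mathbf S\sqcap\mathbf T=\mathbf 0:=(\emptyset,\dots,\emptyset)$: every element of $\mathrm{supp}(\mathbf S\sqcap\mathbf T)$ lies in the same coordinate of $\mathbf S$ and of $\mathbf T$, so freezing those coordinates and restricting $f$ to the remaining ground set yields a function that is still orthant submodular and pairwise monotone, leaves all four relevant values of $f$ unchanged, and has meet $\mathbf 0$. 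Once $\mathbf S\sqcap\mathbf T=\mathbf 0$, each element is either assigned only in $\mathbf S$, assigned only in $\mathbf T$, or \emph{conflicting} (assigned to distinct coordinates in $\mathbf S$ and $\mathbf T$), and $\mathbf B:=\mathbf S\sqcup\mathbf T$ is precisely the tuple keeping the first two kinds of assignments and dropping the conflicting ones; the target becomes $f(\mathbf S)+f(\mathbf T)\ge f(\mathbf 0)+f(\mathbf B)$. I would then induct on the number $q$ of conflicting elements. When $q=0$, $\mathbf B$ is the coordinatewise union of $\mathbf S$ and $\mathbf T$, and the plan is a coupled telescoping: list the elements assigned only in $\mathbf S$ in a fixed order and insert them one at a time, simultaneously into $\mathbf 0$ (growing to $\mathbf S$) and into $\mathbf T$ (growing to $\mathbf B$); at each step the tuple on the $\mathbf 0$-side is $\preceq$ the tuple on the $\mathbf T$-side and the inserted element lies outside the support of the latter, so orthant submodularity makes the $\mathbf 0$-side marginal gain dominate the $\mathbf T$-side one, and summing gives $f(\mathbf S)-f(\mathbf 0)\ge f(\mathbf B)-f(\mathbf T)$.

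For the inductive step, pick a conflicting element $e$, say in coordinate $i$ of $\mathbf S$ and coordinate $j$ of $\mathbf T$, and let $\mathbf S^-$ be $\mathbf S$ with $e$ deleted. One checks $\mathbf S^-\sqcap\mathbf T=\mathbf 0$ (now with $q-1$ conflicts) and $\mathbf S^-\sqcup\mathbf T=\mathbf B$ with $e$ re-inserted into coordinate $j$, so the induction hypothesis gives $f(\mathbf S^-)+f(\mathbf T)\ge f(\mathbf 0)+f(\mathbf B)+\Delta_{e,j}f(\mathbf B)$; adding $\Delta_{e,i}f(\mathbf S^-)=f(\mathbf S)-f(\mathbf S^-)$ to both sides, the whole step reduces to the single inequality
\[
\Delta_{e,i}f(\mathbf S^-)+\Delta_{e,j}f(\mathbf B)\ge 0.
\]

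This last inequality is the main obstacle: $\mathbf S^-$ and $\mathbf B$ are typically $\preceq$-incomparable, since they disagree on the remaining conflicting elements, so neither orthant submodularity nor pairwise monotonicity can be applied to the pair directly. The resolution I would pursue is to produce a common upper bound: let $\mathbf W$ be $\mathbf S^-$ with every element assigned only in $\mathbf T$ added back at its $\mathbf T$-coordinate; then $\mathbf S^-\preceq\mathbf W$, $\mathbf B\preceq\mathbf W$, and $e\notin\mathrm{supp}(\mathbf W)$. Orthant submodularity then gives $\Delta_{e,i}f(\mathbf S^-)\ge\Delta_{e,i}f(\mathbf W)$ and $\Delta_{e,j}f(\mathbf B)\ge\Delta_{e,j}f(\mathbf W)$, while pairwise monotonicity at $\mathbf W$ gives $\Delta_{e,i}f(\mathbf W)+\Delta_{e,j}f(\mathbf W)\ge 0$; adding the three inequalities closes the induction. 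The remaining work — confirming that each auxiliary tuple genuinely lies in $(k+1)^U$ and verifying the meet/join identities invoked above — is routine bookkeeping, but finding the comparison tuple $\mathbf W$ (equivalently, sequencing the conflict removals so that pairwise monotonicity and orthant submodularity can be combined) is where I expect the genuine difficulty to lie.
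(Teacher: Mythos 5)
The paper does not actually prove this statement — it is quoted verbatim as Theorem 7 of the cited reference \cite{MR3549595} — so there is no in-paper argument to compare against; what you have written is a self-contained reconstruction, and after checking it I believe it is correct. The necessity direction works exactly as you say: with $\mathbf S\preceq\mathbf T$ and $e\notin\mathrm{supp}(\mathbf T)$ the identities $\mathbf S^{(i)}\sqcap\mathbf T=\mathbf S$ and $\mathbf S^{(i)}\sqcup\mathbf T=\mathbf T^{(i)}$ do hold (disjointness of coordinates plus $e\notin\mathrm{supp}(\mathbf T)$), and the pair $(\mathbf S^{(i)},\mathbf S^{(j)})$ has both meet and join equal to $\mathbf S$, giving the two properties. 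For sufficiency, the contraction onto $U\setminus\mathrm{supp}(\mathbf S\sqcap\mathbf T)$ is legitimate (an element of $S_i\cap T_i$ cannot sit in any other coordinate, so the four values of $f$ are preserved and the new meet is empty), the base-case telescoping is a standard coupled-insertion use of orthant submodularity, and the key inductive inequality $\Delta_{e,i}f(\mathbf S^-)+\Delta_{e,j}f(\mathbf B)\ge 0$ is correctly closed by your tuple $\mathbf W$: one checks $\mathbf S^-\preceq\mathbf W$, $\mathbf B\preceq\mathbf W$ (every surviving element of $\mathbf B$ is either a non-conflicting $\mathbf S$-element, hence in $\mathbf S^-$, or an only-$\mathbf T$ element, hence added to $\mathbf W$ at the same coordinate), $e\notin\mathrm{supp}(\mathbf W)$, and $i\neq j$ because the meet is empty, so the two orthant-submodularity comparisons plus pairwise monotonicity at $\mathbf W$ add up as claimed. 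This is essentially the argument of Ward and \v{Z}ivn\'y for the cited theorem — necessity from singleton instances of the defining inequality, sufficiency by an element-by-element induction in which orthant submodularity handles non-conflicting elements and pairwise monotonicity (applied at a common upper bound) absorbs the conflicting ones — so your proposal buys a proof the paper omits, at the cost only of the routine bookkeeping you flagged, all of which checks out.
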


The next two results \textcolor{black}{relate} the sum of individually supermodular or submodular functions to $k$-supermodularity or $k$-submodularity respectively \textcolor{black}{using Theorem~\ref{thm:k-submod}.}

\begin{lemma}\label{lem:sum_nonincr_supmod}
    Let $F: (k+1)^U \to \mathbb{R}$ defined to be
    $$F(\mathbf{S}) = F(S_1, \ldots, S_k) := \sum_{i=1}^k F_i(S_i)$$
    be the sum of $k$ monotonically non-increasing and supermodular functions $(F_i)_{i=1}^k$ with $F_i : 2^U \to \mathbb{R}$ for all $i \in \llbracket k \rrbracket$. Then $F$ is $k$-supermodular.
\end{lemma}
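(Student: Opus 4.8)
The plan is to invoke the characterization of $k$-supermodularity from Theorem~\ref{thm:k-submod}: it suffices to show that $F$ is both orthant supermodular and pairwise monotonically non-increasing. Since $F(\mathbf{S}) = \sum_{i=1}^k F_i(S_i)$ is a separable sum in which the $i$-th summand depends only on the $i$-th coordinate set $S_i$, both conditions should reduce cleanly to the corresponding one-dimensional hypotheses on each $F_i$.

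First I would verify orthant supermodularity. Fix $i \in \llbracket k \rrbracket$, take $\mathbf{S} \preceq \mathbf{T}$ in $(k+1)^U$ and $e \notin \mathrm{supp}(\mathbf{T})$. Because only the $i$-th summand of $F$ changes when we add $e$ to the $i$-th coordinate, we have $\Delta_{e,i} F(\mathbf{S}) = F_i(S_i \cup \{e\}) - F_i(S_i)$ and likewise $\Delta_{e,i} F(\mathbf{T}) = F_i(T_i \cup \{e\}) - F_i(T_i)$; all the other terms $F_j(S_j)$, $F_j(T_j)$ for $j \neq i$ cancel. Now $\mathbf{S} \preceq \mathbf{T}$ gives $S_i \subseteq T_i$, and $e \notin \mathrm{supp}(\mathbf{T}) \supseteq T_i$, so supermodularity of $F_i$ (in its equivalent diminishing-returns-reversed form: $A \subseteq B$, $e \notin B$ implies $F_i(A \cup \{e\}) - F_i(A) \leq F_i(B \cup \{e\}) - F_i(B)$) yields $\Delta_{e,i} F(\mathbf{S}) \leq \Delta_{e,i} F(\mathbf{T})$, which is exactly \eqref{eq:ort_submod} in the supermodular direction.

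Next I would verify pairwise monotone non-increasingness. Fix $\mathbf{S} \in (k+1)^U$, $e \notin \mathrm{supp}(\mathbf{S})$, and distinct $i,j \in \llbracket k \rrbracket$. Again by separability, $\Delta_{e,i} F(\mathbf{S}) = F_i(S_i \cup \{e\}) - F_i(S_i)$ and $\Delta_{e,j} F(\mathbf{S}) = F_j(S_j \cup \{e\}) - F_j(S_j)$. Since each $F_i$ is monotonically non-increasing, each of these marginal gains is $\leq 0$, hence their sum is $\leq 0$, which is precisely the pairwise non-increasing condition. Combining the two verified properties with Theorem~\ref{thm:k-submod} concludes that $F$ is $k$-supermodular.

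I do not anticipate a genuine obstacle here; the only point requiring a little care is making sure the bookkeeping of which terms cancel in $\Delta_{e,i} F(\mathbf{S})$ is stated correctly, and that the one-dimensional ``supermodular $\iff$ reverse diminishing returns'' equivalence is quoted in the form actually needed (it is the supermodular analogue of the standard characterization of submodular set functions via diminishing marginal returns). If the paper prefers not to rely on that equivalence, one can instead verify supermodularity of $F_i$ directly through the lattice inequality $F_i(A) + F_i(B) \leq F_i(A \cap B) + F_i(A \cup B)$ applied to $A = S_i \cup \{e\}$, $B = T_i$, noting $A \cap B = S_i$ and $A \cup B = T_i \cup \{e\}$ since $S_i \subseteq T_i$ and $e \notin T_i$; this reproduces the same inequality. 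Either route is routine.
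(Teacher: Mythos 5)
Your proposal is correct and follows essentially the same route as the paper's proof: check pairwise monotone non-increasingness from the monotonicity of each $F_i$, check orthant supermodularity from the supermodularity of $F_i$ (via the diminishing-returns form, which you also justify directly from the lattice inequality), and conclude with Theorem~\ref{thm:k-submod}. No gaps.
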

\begin{proof}
    Throughout this proof, let $i \neq j \in \llbracket k \rrbracket$. First, we seek to prove that $F$ is pairwise monotonically non-increasing, in which case we aim to show $\Delta_{e, i} F(\mathbf S) + \Delta_{e, j} F(\mathbf S) \leq 0$ for $e\notin \mathrm{supp}(\mathbf{S})$:
    $$\Delta_{e, i} F(\mathbf S) + \Delta_{e, j} F(\mathbf S) = (F_i(S_i \cup \{e\}) - F_i(S_i)) + (F_j(S_j \cup \{e\}) - F_j(S_i)) \leq 0, $$ given that $F_i, F_j$ are both monotonically non-increasing. Next, we seek to show that $F$ is orthant supermodular, in which case we aim to show that $\Delta_{e,i}F(\mathbf S) \leq \Delta_{e,i}F(\mathbf T)$ for any $\mathbf{S} \preceq \mathbf{T}$ and $e\notin \mathrm{supp}(\mathbf{T})$:
    $$\Delta_{e,i}F(\mathbf S) - \Delta_{e,i}F(\mathbf T) = (F_i(S_i \cup \{e\}) - F_i(S_i)) - (F_i(T_i \cup \{e\}) - F_i(T_i)) \leq 0,$$
    given that $F_i$ is supermodular.
    Therefore, $F$ is $k$-supermodular given that it is pairwise monotonically non-increasing and orthant supermodular using Theorem \ref{thm:k-submod}. 
\end{proof}

\begin{corollary}\label{lem:sum_nondecr_submod}
    Let $G: (k+1)^U \to \mathbb{R}$ defined to be
    $$G(\mathbf{S}) = G(S_1, \ldots, S_k) := \sum_{i=1}^k G_i(S_i)$$
    be the sum of $k$ monotonically non-decreasing and submodular functions $(G_i)_{i=1}^k$ with $G_i : 2^U \to \mathbb{R}$ for all $i \in \llbracket k \rrbracket$. Then $G$ is $k$-submodular.
\end{corollary}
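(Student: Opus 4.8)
The plan is to deduce this directly from Lemma \ref{lem:sum_nonincr_supmod} by negation, exploiting the fact that all of the relevant notions (submodular versus supermodular, monotone non-decreasing versus non-increasing, and $k$-submodular versus $k$-supermodular) are interchanged under a sign flip. Concretely, I would set $F_i := -G_i$ for each $i \in \llbracket k \rrbracket$ and $F := -G$, so that $F(\mathbf{S}) = F(S_1, \ldots, S_k) = \sum_{i=1}^k F_i(S_i)$, and then reduce to the already-proven lemma.

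The key verification is that each $F_i$ satisfies the hypotheses of Lemma \ref{lem:sum_nonincr_supmod}. Since $G_i$ is submodular, $F_i = -G_i$ is supermodular by the definition of supermodularity in Section \ref{subsec:submodular}. Since $G_i$ is monotonically non-decreasing, i.e.\ $G_i(A) \leq G_i(B)$ whenever $A \subseteq B$, we immediately obtain $F_i(A) = -G_i(A) \geq -G_i(B) = F_i(B)$, so $F_i$ is monotonically non-increasing. Thus $(F_i)_{i=1}^k$ is a family of $k$ monotonically non-increasing supermodular functions, and Lemma \ref{lem:sum_nonincr_supmod} yields that $F = \sum_{i=1}^k F_i(\cdot)$ is $k$-supermodular. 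By definition of $k$-supermodularity, $-F = G$ is then $k$-submodular, which is the claim.

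Alternatively, one could repeat the two-step argument of Lemma \ref{lem:sum_nonincr_supmod} verbatim with the inequalities reversed: first check that $G$ is pairwise monotonically non-decreasing using $\Delta_{e, i} G(\mathbf{S}) + \Delta_{e, j} G(\mathbf{S}) = (G_i(S_i \cup \{e\}) - G_i(S_i)) + (G_j(S_j \cup \{e\}) - G_j(S_j)) \geq 0$ by monotonicity of $G_i$ and $G_j$; then check that $G$ is orthant submodular via $\Delta_{e,i} G(\mathbf{S}) - \Delta_{e,i} G(\mathbf{T}) = (G_i(S_i \cup \{e\}) - G_i(S_i)) - (G_i(T_i \cup \{e\}) - G_i(T_i)) \geq 0$ by submodularity of $G_i$ together with $S_i \subseteq T_i$; and finally invoke the characterization in Theorem \ref{thm:k-submod}. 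There is essentially no obstacle here; the only point requiring care is confirming that negation reverses both the monotonicity direction and the modularity type in the compatible way, which is transparent from the definitions, so the bulk of the work has already been done in Lemma \ref{lem:sum_nonincr_supmod}.
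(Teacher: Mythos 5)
Your proposal is correct and takes essentially the same route as the paper: the paper's proof also applies Lemma \ref{lem:sum_nonincr_supmod} to $-G$ and then uses that $k$-supermodularity of $-G$ is equivalent to $k$-submodularity of $G$. Your write-up simply makes explicit the sign-flip verification (and offers a direct alternative via Theorem \ref{thm:k-submod}) that the paper leaves implicit.
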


\begin{proof}
    By applying Lemma \ref{lem:sum_nonincr_supmod} to $-G$, we see that $-G$ is $k$-supermodular, which is equivalent to $G$ being $k$-submodular.
\end{proof}

\textcolor{black}{In the following sections, we shall show that some natural orthant submodular structures arise in the information theory of multivariate Markov chains. Hence,}
we aim to prove a generalized version of Theorem \ref{thm:monotonize}, that transforms a given constrained orthant submodular function into a \textcolor{black}{monotonically non-decreasing} $k$-submodular function. \textcolor{black}{By applying such transformation, we shall propose Algorithm~\ref{alg:generalized_distorted_greedy} in the subsequent subsection to approximately solve the $k$-submodular optimization problem with theoretical guarantee.} \textcolor{black}{More precisely,} suppose that we are given $\mathbf{V} \in (k+1)^U$, then, constrained to $\mathbf{V}$, we can transform an orthant submodular function into a $k$-submodular function.
\begin{theorem}[\textcolor{black}{Transforming an orthant submodular function to a $k$-submodular function}]\label{thm:monotonize-k}
    Let $f: (k+1)^U \to \mathbb{R}$ be an orthant submodular function, $\beta \in \mathbb{R}$ and $\mathbf{V} \in (k+1)^U$. then $g: (k+1)^U \preceq \mathbf{V} \to \mathbb{R}$ with $$g(\mathbf{S}) := f(\mathbf{S}) - \beta + \sum_{i=1}^k \sum_{e\in S_i} \left( f(V_1, \ldots, V_i \backslash \{e\}, \ldots, V_k) - f(V_1, \ldots, V_i, \ldots, V_k) \right)$$ is $k$-submodular and monotonically non-decreasing.
\end{theorem}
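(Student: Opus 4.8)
The plan is to follow the structure of the proof of Theorem~\ref{thm:monotonize} (which is the case $k=1$, $\mathbf{V}=U$): write $g$ as $f$ plus a ``modular'' correction term plus the constant $-\beta$, check that this preserves orthant submodularity, and then observe that the correction term is chosen precisely so that $g$ becomes monotonically non-decreasing; $k$-submodularity then follows from Theorem~\ref{thm:k-submod}.

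Concretely, set $\phi_i(e) := f(V_1,\ldots,V_i\setminus\{e\},\ldots,V_k) - f(V_1,\ldots,V_i,\ldots,V_k)$, a constant depending only on the pair $(i,e)$, and write $g(\mathbf{S}) = f(\mathbf{S}) - \beta + c(\mathbf{S})$ with $c(\mathbf{S}) := \sum_{i=1}^k\sum_{e\in S_i}\phi_i(e)$. Fix $\mathbf{S}\preceq\mathbf{V}$, $i\in\llbracket k\rrbracket$ and $e\notin\mathrm{supp}(\mathbf{S})$ with $e\in V_i$ (this last condition being exactly what makes $(S_1,\ldots,S_i\cup\{e\},\ldots,S_k)\preceq\mathbf{V}$, so that the marginal gain lives in the domain of $g$). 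Then $\Delta_{e,i}c(\mathbf{S}) = \phi_i(e)$ does not depend on $\mathbf{S}$, so $c$ is orthant modular, and hence $g = f + c - \beta$ is orthant submodular on $\{\mathbf{S}\preceq\mathbf{V}\}$. For monotonicity, let $\mathbf{V}^{(i,e)}$ denote $\mathbf{V}$ with $e$ deleted from its $i$-th coordinate; since $e\in V_i$ we get $\phi_i(e) = -\Delta_{e,i}f(\mathbf{V}^{(i,e)})$, so $\Delta_{e,i}g(\mathbf{S}) = \Delta_{e,i}f(\mathbf{S}) - \Delta_{e,i}f(\mathbf{V}^{(i,e)})$. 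Now $e\notin\mathrm{supp}(\mathbf{S})$ gives $e\notin S_i$, hence $\mathbf{S}\preceq\mathbf{V}^{(i,e)}$; and since the coordinates of $\mathbf{V}\in(k+1)^U$ are pairwise disjoint, $e\in V_i$ forces $e\notin V_j$ for $j\neq i$, so $e\notin\mathrm{supp}(\mathbf{V}^{(i,e)})$. Orthant submodularity of $f$, i.e.~\eqref{eq:ort_submod} applied with $\mathbf{S}\preceq\mathbf{V}^{(i,e)}$ and $e\notin\mathrm{supp}(\mathbf{V}^{(i,e)})$, then gives $\Delta_{e,i}f(\mathbf{S})\geq\Delta_{e,i}f(\mathbf{V}^{(i,e)})$, i.e.~$\Delta_{e,i}g(\mathbf{S})\geq 0$. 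Thus $g$ is monotonically non-decreasing, and a fortiori pairwise monotonically non-decreasing.

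Having shown $g$ is orthant submodular and pairwise monotonically non-decreasing, Theorem~\ref{thm:k-submod} yields that $g$ is $k$-submodular, which is the claim. The argument is essentially routine once the right correction term is identified; the only points requiring care — and the place where one could go wrong — are (i) the bookkeeping with the domain restriction $\mathbf{S}\preceq\mathbf{V}$, which is simultaneously what guarantees $\mathbf{S}\preceq\mathbf{V}^{(i,e)}$, what keeps the relevant single-element additions inside the domain of $g$, and (via disjointness of the $V_j$'s) what makes $e\notin\mathrm{supp}(\mathbf{V}^{(i,e)})$ hold, so that orthant submodularity of $f$ can legitimately be invoked with reference tuple $\mathbf{V}^{(i,e)}$; and (ii) the minor observation that $\{\mathbf{S}\in(k+1)^U:\mathbf{S}\preceq\mathbf{V}\}$ is a sublattice of $((k+1)^U,\sqcap,\sqcup)$ closed under both operations, on which the characterization of Theorem~\ref{thm:k-submod} applies. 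Neither is a genuine obstacle; the essential content is the single orthant-submodularity comparison $\Delta_{e,i}f(\mathbf{S})\geq\Delta_{e,i}f(\mathbf{V}^{(i,e)})$, exactly paralleling the $k=1$ case with $U$ replaced by $\mathbf{V}$.
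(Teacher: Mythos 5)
Your proof is correct and takes essentially the same route as the paper's: the correction term has constant marginals, so $g$ inherits orthant submodularity from $f$, monotonicity reduces to the single comparison $\Delta_{e,i}f(\mathbf{S}) \geq \Delta_{e,i}f(\mathbf{V}^{(i,e)})$ with $\mathbf{V}^{(i,e)}$ denoting $\mathbf{V}$ with $e$ removed from its $i$-th coordinate, and then pairwise monotonicity plus Theorem~\ref{thm:k-submod} give $k$-submodularity. If anything you are more explicit than the paper about why $e \notin \mathrm{supp}(\mathbf{V}^{(i,e)})$ (disjointness of the coordinates of $\mathbf{V}$ in $(k+1)^U$) and about the domain $\{\mathbf{S} \preceq \mathbf{V}\}$ being closed under $\sqcap$ and $\sqcup$.
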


\begin{proof}
    Suppose that $\mathbf{S} \preceq \mathbf{T}$, $i \in \llbracket k \rrbracket$, and $e \in V_i \backslash T_i$. Since $f$ is orthant submodular, we have $\Delta_{e, i} f(\mathbf{S}) \geq \Delta_{e, i} f(\mathbf{T})$, and hence
    \begin{align*}
        \Delta_{e, i} g(\mathbf{S}) &= \Delta_{e, i} f(\mathbf{S}) + f(V_1, \ldots, V_i \backslash \{e\}, \ldots, V_k) - f(V_1, \ldots, V_i, \ldots, V_k) \\
        & \geq \Delta_{e, i} f(\mathbf{T}) + \Delta_{e, i} \sum_{j=1}^k \sum_{u \in T_j} \left(f(V_1, \ldots, V_j \backslash \{u\}, \ldots, V_k) - f(V_1, \ldots, V_j, \ldots, V_k)\right)\\
        &= \Delta_{e, i} g(\mathbf{T}).
    \end{align*}
    This gives $g$ is orthant submodular. 

    To prove the orthant monotonicity, we choose $\mathbf{S} \in (k+1)^U$, $i \in \llbracket k \rrbracket$, and $e \in V_i \backslash S_i$. From the orthant submodularity of $f$, since $S_i \subseteq V_i \backslash \{e\}$, we have 
    $$\Delta_{e, i} g(\mathbf{S}) = \Delta_{e, i} f(\mathbf{S}) - \left(f(V_1, \ldots, V_i, \ldots, V_k) - f(V_1, \ldots, V_i \backslash \{e\}, \ldots, V_k)\right) \geq 0.$$
    Therefore $g$ is monotonically non-decreasing, which implies that $g$ is pairwise monotonically non-decreasing, and hence $g$ is $k$-submodular.
\end{proof}

In the remaining of this subsection and also the coming subsection, we recall a few classical submodular optimization algorithms.

To maximize a monotonically non-decreasing submodular function, one can apply a heuristic greedy algorithm (see Section 4 of \cite{MR503866}) with $(1 - e^{-1})$-approximation guarantee. For non-monotone submodular functions, we recall a local search algorithm (Algorithm~\ref{alg:local_search}) that comes along with an approximation guarantee (Theorem \ref{thm:lb_local_search}).

\begin{algorithm}
\caption{\textbf{Local Search Algorithm (Section 3.1 of \cite{feige2011maximizing})}}
\label{alg:local_search}
\begin{algorithmic}[1]
\Require Ground set $U$ with $|U|=d$, submodular function $f$, positive $\epsilon > 0$
\State Initialize $S \gets \{e\}$, where $f(\{e\})$ is the maximum over all singletons $e \in U$
\While{ $\exists \, a \in U\backslash S$ such that $f(S \cup \{a\}) \geq (1 + \epsilon / d^2) f(S)$}
    \State $S \gets S\cup \{a\}$
\EndWhile
\If{$\exists \, a\in S$ such that $f(S \backslash \{a\}) \geq (1 + \epsilon / d^2) f(S)$}
    \State $S \gets S\backslash \{a\}$
    \State Go back to line 2
\EndIf
\State \textbf{Output}: $f(S)$ and $f(U \backslash S)$
\end{algorithmic}
\end{algorithm}

\begin{theorem}[Approximation guarantee of Algorithm~\ref{alg:local_search} \textcolor{black}{(Theorem 3.4 of \cite{feige2011maximizing})}]\label{thm:lb_local_search}
    Algorithm~\ref{alg:local_search} is a $\left(\frac{1}{3} - \frac{\epsilon}{d}\right)$-approximation algorithm for maximizing non-negative submodular functions, and $\left(\frac{1}{2} - \frac{\epsilon}{d}\right)$-approximation algorithm for maximizing non-negative symmetric submodular functions. The time complexity of Algorithm~\ref{alg:local_search} is $\mathcal{O} \left(\frac{1}{\epsilon} d^3 \log d\right)$.
\end{theorem}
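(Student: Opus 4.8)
The plan is to analyze the set $S$ returned by Algorithm~\ref{alg:local_search}, which by construction satisfies the \emph{approximate local optimality} conditions $f(S\cup\{a\}) < (1+\epsilon/d^2)f(S)$ for all $a\in U\setminus S$ and $f(S\setminus\{a\}) < (1+\epsilon/d^2)f(S)$ for all $a\in S$ (at termination both the while-loop condition and the if-test have failed). The heart of the argument is the following lemma: for \emph{every} $C\subseteq U$ one has $f(S\cap C)\le (1+\epsilon/d)f(S)$ and $f(S\cup C)\le(1+\epsilon/d)f(S)$. I would prove this by telescoping: writing $S\setminus C=\{v_1,\dots,v_\ell\}$ with $\ell\le d$ and setting $S_j=S\setminus\{v_1,\dots,v_j\}$ so that $S_\ell=S\cap C$, the inclusion $S_j\subseteq S\setminus\{v_j\}$ together with submodularity in diminishing-returns form (as already used in the proof of Lemma~\ref{lem:complement_submodularity}) gives $f(S_{j-1})-f(S_j)=f(S_j\cup\{v_j\})-f(S_j)\ge f(S)-f(S\setminus\{v_j\})> -\tfrac{\epsilon}{d^2}f(S)$; summing the $\ell\le d$ inequalities yields $f(S\cap C)<(1+\epsilon/d)f(S)$. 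The bound on $f(S\cup C)$ is obtained symmetrically by adjoining the elements of $C\setminus S$ one at a time.

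Next I would instantiate $C$ as a global maximizer, $f(C)=\mathrm{OPT}$, and combine the lemma with two applications of the defining submodular inequality, chosen so that the leftover terms are $f(\emptyset)$ and $f(U)$ (both $\ge 0$, hence discardable). First, $S\cap C$ and $C\setminus S$ are disjoint with union $C$, so $f(S\cap C)+f(C\setminus S)\ge f(C)+f(\emptyset)\ge \mathrm{OPT}$. Second, $S\cup C$ and $U\setminus S$ have union $U$ and intersection $C\setminus S$, so $f(S\cup C)+f(U\setminus S)\ge f(U)+f(C\setminus S)\ge f(C\setminus S)$. Chaining these with the lemma gives
\[ \mathrm{OPT}\le f(S\cap C)+f(S\cup C)+f(U\setminus S)\le 2(1+\epsilon/d)f(S)+f(U\setminus S)\le (3+2\epsilon/d)\max\{f(S),f(U\setminus S)\}, \]
and since the algorithm outputs both $f(S)$ and $f(U\setminus S)$, the larger of the two is at least $\mathrm{OPT}/(3+2\epsilon/d)\ge(\tfrac13-\tfrac{\epsilon}{d})\mathrm{OPT}$. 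For symmetric $f$, the relation $f(A)=f(U\setminus A)$ turns the approximate-local-optimality conditions for $S$ into exactly those for $U\setminus S$, so applying the lemma to $U\setminus S$ gives $f(C\setminus S)=f((U\setminus S)\cap C)\le(1+\epsilon/d)f(U\setminus S)=(1+\epsilon/d)f(S)$ directly; combined with the first submodular inequality this yields $\mathrm{OPT}\le 2(1+\epsilon/d)f(S)$, i.e.\ $f(S)\ge(\tfrac12-\tfrac{\epsilon}{d})\mathrm{OPT}$.

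For the running time and termination I would use three observations: the initial singleton satisfies $\mathrm{OPT}=f(C)\le f(\emptyset)+\sum_{e\in C}(f(\{e\})-f(\emptyset))\le\sum_{e\in C}f(\{e\})\le d\cdot f(\{e\})$ by submodularity and non-negativity, so $f(S)$ starts at value $\ge\mathrm{OPT}/d$; it never exceeds $\mathrm{OPT}$; and every executed addition or deletion multiplies $f(S)$ by a factor $\ge 1+\epsilon/d^2$. Hence there are at most $\log_{1+\epsilon/d^2}d=\mathcal{O}(\tfrac{d^2}{\epsilon}\log d)$ moves, each preceded by a scan of $\mathcal{O}(d)$ candidate single-element modifications (one oracle call each), which gives the claimed $\mathcal{O}(\tfrac{1}{\epsilon}d^3\log d)$ time; the strict monotone increase of $f(S)$ over finitely many possible values also forces termination.

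The main obstacle is controlling constants in the local-optimality lemma: one must verify that the per-step multiplicative slack $\epsilon/d^2$ accumulates to only $\epsilon/d$ over telescoping chains of length $\le d$, so that the exact-optimum inequalities $f(S\cap C),f(S\cup C)\le f(S)$ degrade to the harmless factor $1+\epsilon/d$. The second delicate point is the choice of the two set pairs in the submodular inequalities: one needs the unwanted terms to be precisely $f(\emptyset)$ and $f(U)$ while the surviving terms are exactly $f(S\cap C)$, $f(S\cup C)$ and $f(U\setminus S)$ — this is what forces the algorithm to report $f(U\setminus S)$ alongside $f(S)$, and is the source of the $\tfrac13$ versus $\tfrac12$ (symmetric) dichotomy.
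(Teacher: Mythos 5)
Your proposal is correct, but there is nothing in the paper to compare it against: Theorem~\ref{thm:lb_local_search} is stated as a recalled result, cited to Theorem~3.4 of \cite{feige2011maximizing}, and the paper gives no proof of its own. What you have written is essentially a faithful reconstruction of the Feige--Mirrokni--Vondr\'ak local-search analysis: the approximate-local-optimality lemma ($f(S\cap C),\,f(S\cup C)\le(1+\epsilon/d)f(S)$ via telescoping with per-step slack $\epsilon/d^2$ over chains of length at most $d$), the two applications of the submodular inequality whose discarded terms are $f(\emptyset)$ and $f(U)$, the resulting bound $\mathrm{OPT}\le 2(1+\epsilon/d)f(S)+f(U\setminus S)$ giving the $\tfrac13-\tfrac{\epsilon}{d}$ guarantee for $\max\{f(S),f(U\setminus S)\}$, the observation that symmetry makes $U\setminus S$ an approximate local optimum as well (yielding $\tfrac12-\tfrac{\epsilon}{d}$), and the $\mathcal{O}(\tfrac{1}{\epsilon}d^3\log d)$ oracle-call count from the value range $[\mathrm{OPT}/d,\mathrm{OPT}]$ and the multiplicative improvement per move. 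The only point worth tightening is the initialization bound: $\max_{e}f(\{e\})\ge\mathrm{OPT}/d$ requires the optimum to be attained at a nonempty set (it uses $f(C)\le\sum_{e\in C}f(\{e\})$ together with $f(\emptyset)\ge0$), and the degenerate case where $\emptyset$ is the unique maximizer should be dispatched separately; this is the same implicit convention as in the cited source and does not affect the stated guarantees.
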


\subsection{Distorted greedy algorithms to maximize the difference of a submodular function and a modular function}\label{subsec:distortgreedy}

In this paper, \textcolor{black}{in view of Theorem~\ref{thm:monotonize},} some functions we are interested in optimizing can be written as a difference of a \textcolor{black}{monotonically non-decreasing} submodular function and a modular function. In this section, we shall consider maximizing the difference of a monotonically non-decreasing submodular $g$ and a modular $c$ on the ground set $U$ with cardinality constraint being at most $m \in \mathbb{N}$. Precisely, we consider the problem
\begin{align*}
    \max_{S \subseteq U;~ |S| \leq m} g(S) - c(S),
\end{align*}
and 
\begin{align*}
    \mathrm{OPT} = \mathrm{OPT}(g,c,U, m) := \argmax_{S \subseteq U;~ |S| \leq m} g(S) - c(S). 
\end{align*}

In this setting, a distorted greedy algorithm (Algorithm \ref{alg:distorted_greedy}) has been proposed along with a theoretical lower bound \cite{pmlr-v97-harshaw19a}.

\begin{algorithm}
\caption{\textbf{Distorted greedy algorithm for maximizing the difference between a monotonically non-decreasing submodular function and a modular function}}\label{alg:distorted_greedy}
\begin{algorithmic}[1]
\Require monotonically non-decreasing submodular $g$ with $g(\emptyset) \geq 0$, non-negative modular $c$, cardinality $m$, ground set $U$
\State Initialize $S_0 \gets \emptyset$
\For{$i = 0$ to $m-1$}
    \State $e_i \gets \argmax\limits_{e \in U} \left\{ \left( 1 - \frac{1}{m} \right)^{m - (i+1)} (g(S_i \cup \{e\}) - g(S_i)) - c(\{e\}) \right\}$
    \If{$\left( 1 - \frac{1}{m} \right)^{m - (i+1)} (g(S_i \cup \{e_i\}) - g(S_i)) - c(\{e_i\}) > 0$}
        \State $S_{i+1} \gets S_i \cup \{e_i\}$
    \Else
        \State $S_{i+1} \gets S_i$
    \EndIf
\EndFor
\State \textbf{Output:} $S_m$. 
\end{algorithmic}
\end{algorithm}

\begin{theorem}[Lower bound for distorted greedy algorithm \textcolor{black}{(Theorem 3 of \cite{pmlr-v97-harshaw19a})}]\label{thm:lb_distgrdy}
    Algorithm \ref{alg:distorted_greedy} provides the following lower bound:
    $$g(S_m) - c(S_m) \geq (1 - e^{-1})g(\mathrm{OPT}) - c(\mathrm{OPT}),$$ where $S_m$ is the final output set.
\end{theorem}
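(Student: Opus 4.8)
The plan is to track the evolution of a \emph{distorted objective}. For $i \in \{0,1,\ldots,m\}$ and $S \subseteq U$, define
\[
\Phi_i(S) := \left(1 - \frac{1}{m}\right)^{m-i} g(S) - c(S),
\]
so that $\Phi_m(S) = g(S) - c(S)$ is the true objective while at $i = 0$ the submodular part is heavily discounted. Since $S_0 = \emptyset$, $c(\emptyset) = 0$, and $g(\emptyset) \geq 0$, we have $\Phi_0(S_0) = (1-1/m)^m g(\emptyset) \geq 0$, whereas $\Phi_m(S_m) = g(S_m) - c(S_m)$ is precisely the quantity we want to bound from below; hence it suffices to show $\Phi$ increases by enough across the $m$ iterations, i.e. $g(S_m)-c(S_m) \geq \Phi_m(S_m) - \Phi_0(S_0)$ and then lower-bound the right-hand side.

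The core step is the per-iteration estimate: for every $i \in \{0,\ldots,m-1\}$,
\[
\Phi_{i+1}(S_{i+1}) - \Phi_i(S_i) \geq \frac{1}{m}\left(1 - \frac{1}{m}\right)^{m-(i+1)} g(\mathrm{OPT}) - \frac{1}{m}\, c(\mathrm{OPT}).
\]
Writing $h_i(e) := \left(1 - \tfrac1m\right)^{m-(i+1)}\bigl(g(S_i \cup \{e\}) - g(S_i)\bigr) - c(\{e\})$ for the quantity maximized in line~3 of Algorithm~\ref{alg:distorted_greedy}, I would use modularity of $c$ and the identity $\left(1-\tfrac1m\right)^{m-(i+1)} - \left(1-\tfrac1m\right)^{m-i} = \tfrac1m\left(1-\tfrac1m\right)^{m-(i+1)}$ to check, in each of the two branches of the algorithm separately, the exact identity
\[
\Phi_{i+1}(S_{i+1}) - \Phi_i(S_i) = \max\Bigl\{0,\ \max_{e \in U} h_i(e)\Bigr\} + \frac{1}{m}\left(1-\frac{1}{m}\right)^{m-(i+1)} g(S_i),
\]
the point being that $S_{i+1} = S_i \cup \{e_i\}$ happens exactly when $\max_e h_i(e) > 0$. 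I would then bound the first term below by averaging $h_i$ over $\mathrm{OPT}\setminus S_i$: since $|\mathrm{OPT}\setminus S_i| \leq m$ and $\max\{0,\cdot\}\geq 0$,
\[
\max\Bigl\{0, \max_{e\in U} h_i(e)\Bigr\} = \max_{o\in\mathrm{OPT}\setminus S_i}\max\{0, h_i(o)\} \geq \frac{1}{m}\sum_{o\in\mathrm{OPT}\setminus S_i}\max\{0, h_i(o)\} \geq \frac{1}{m}\sum_{o \in \mathrm{OPT}\setminus S_i} h_i(o),
\]
and then expand the last sum: by submodularity $\sum_{o\in\mathrm{OPT}\setminus S_i}\bigl(g(S_i\cup\{o\}) - g(S_i)\bigr) \geq g(S_i\cup\mathrm{OPT}) - g(S_i) \geq g(\mathrm{OPT}) - g(S_i)$ (last step by monotonicity), while $\sum_{o\in\mathrm{OPT}\setminus S_i} c(\{o\}) \leq c(\mathrm{OPT})$ by non-negativity and modularity of $c$. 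Substituting, the $g(S_i)$ terms cancel against the last term of the identity above, leaving the per-iteration estimate.

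Finally I would telescope: summing the per-iteration estimate over $i = 0,\ldots,m-1$ gives
\[
g(S_m) - c(S_m) = \Phi_m(S_m) \geq \Phi_m(S_m) - \Phi_0(S_0) \geq \left(\sum_{i=0}^{m-1}\frac{1}{m}\left(1-\frac{1}{m}\right)^{m-(i+1)}\right) g(\mathrm{OPT}) - c(\mathrm{OPT}),
\]
and the geometric sum equals $1 - \left(1-\tfrac1m\right)^m$. Since $g(\mathrm{OPT}) \geq g(\emptyset) \geq 0$ by monotonicity and $\left(1-\tfrac1m\right)^m \leq e^{-1}$, the right-hand side is at least $(1-e^{-1}) g(\mathrm{OPT}) - c(\mathrm{OPT})$, which is the claim. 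The step requiring the most care is the per-iteration identity together with the averaging bound that follows it: because the $h_i(o)$ may be negative, one cannot average naively and must route the argument through the $\max\{0,\cdot\}$ wrapper exactly as written above; the remaining work is routine bookkeeping with the discount factors and the standard submodular marginal-gain inequality.
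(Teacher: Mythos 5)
Your argument is correct and follows essentially the same route as the paper: the paper recalls this bound from Harshaw et al.\ without reproving it, but its proof of the $k$-submodular generalization (Theorem~\ref{thm:lb_gen_distgrdy} via Lemmas~\ref{lem:gen_dist_grdy_lem1} and~\ref{lem:gen_dist_grdy_lem2}) is exactly your distorted-potential argument, of which your proof is the $k=1$ specialization. The only nit is that your ``$=$'' in $\max\{0,\max_{e\in U}h_i(e)\} = \max_{o\in\mathrm{OPT}\setminus S_i}\max\{0,h_i(o)\}$ should be ``$\geq$'' (the right side ranges over a subset of $U$, and may be over an empty set), but that is the direction you actually use, so the proof stands.
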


\textcolor{black}{We now generalize Algorithm~\ref{alg:distorted_greedy} into multivariate settings in view of Theorem \ref{thm:monotonize-k}.} Let $\mathbf{V} \in (k+1)^U$, and consider maximizing the difference of a monotonically non-decreasing $k$-submodular $g$ and a modular $c$ on the ground set $U$ with cardinality constraint being at most $m \in \mathbb{N}$. Precisely, we consider the problem
\begin{align}\label{eq:mathbfOPT}
    \max_{\mathbf{S} \preceq \mathbf{V};~ |\mathrm{supp}(\mathbf{S})| \leq m} g(\mathbf{S}) - c(\mathbf{S}),
\end{align}
and 
\begin{align*}
    \mathbf{OPT} = \mathbf{OPT}(g,c,U,\mathbf{V},m) := \argmax_{\mathbf{S} \preceq \mathbf{V};~ |\mathrm{supp}(\mathbf{S})| \leq m} g(\mathbf{S}) - c(\mathbf{S}).
\end{align*}

We propose a generalized distorted greedy algorithm (Algorithm \ref{alg:generalized_distorted_greedy}) for solving \eqref{eq:mathbfOPT} \textcolor{black}{with theoretical lower bound in Theorem~\ref{thm:lb_gen_distgrdy}.}
\textcolor{black}{This serves as a natural extension of Algorithm \ref{alg:distorted_greedy} and Theorem \ref{thm:lb_distgrdy} to be used in the subsequent sections to approximately solve $k$-submodular optimization problems in Markov chain theory with theoretical guarantee.}

\begin{algorithm}[h]
\caption{\textbf{Generalized distorted greedy algorithm for maximizing the difference of $k$-submodular function and a modular function}}\label{alg:generalized_distorted_greedy}
\begin{algorithmic}[1]
\Require $k$-submodular monotonically non-decreasing $g$ with $g(\emptyset) \geq 0$, non-negative modular $c$ with $c(\emptyset) = 0$, cardinality $m$, ground set $U$, $\mathbf{V} = (V_1, \ldots, V_k) \in (k+1)^U$. 
\State Initialize $\mathbf{S}_0 = (S_{0,1},\ldots,S_{0,k}) \gets \emptyset$
\For{$i = 0$ to $m-1$}
    \State $(j^*,e^*) \gets \argmax\limits_{j \in \llbracket k \rrbracket, e \in V_j \backslash S_{i, j}} \left\{ \left( 1 - \frac{1}{m} \right)^{m - (i+1)} \Delta_{e,j}g(\mathbf{S}_i) - c(\{e\}) \right\}$
    \If{$\left( 1 - \frac{1}{m} \right)^{m - (i+1)} \Delta_{e^*,j^*}g(\mathbf{S}_i) - c(\{e^*\}) > 0$}
        \State $S_{i+1,j^*} \gets S_{i,j^*} \cup \{e^*\}$ 
    \Else
        \State $S_{i+1,j^*} \gets S_{i,j^*}$
    \EndIf
    \For{$l \neq j^*$}
        \State $S_{i+1,l} \gets S_{i,l}$
    \EndFor
\EndFor
\State \textbf{Output:} $\mathbf{S}_m = (S_{m,1},\ldots,S_{m,k})$.
\end{algorithmic}
\end{algorithm}

{\color{black}
\begin{theorem}[Lower bound for generalized distorted greedy algorithm]
\label{thm:lb_gen_distgrdy}
    Algorithm \ref{alg:generalized_distorted_greedy} provides the following lower bound: $$g(\mathbf{S}_m) - c(\mathbf{S}_m) \geq (1-e^{-1}) g(\mathbf{OPT}) - c(\mathbf{OPT}),$$
    where $\mathbf{S}_m = (S_{m, 1}, \ldots, S_{m, k})$ is the final output set.
\end{theorem}}

{\color{black}
    We first give an example on the application of Algorithm \ref{alg:generalized_distorted_greedy} to demonstrate it is of interest to $k$-submodular maximization problem beyond the context of Markov chains. We consider the sensor placement problem as discussed earlier in Example \ref{eg:sensor_placement}. As $f$ is monotone and $k$-submodular, we choose $g=f$ and $c=0$ in Algorithm~\ref{alg:generalized_distorted_greedy}, where Theorem~\ref{thm:lb_gen_distgrdy} gives a $(1 - e^{-1})$-approximation guarantee.
}

The rest of this section is devoted to giving a lower bound for the generalized distorted greedy algorithm. We assume that $g$ is monotonically non-decreasing, $k$-submodular, $g(\emptyset) \geq 0$, while $c$ is non-negative, modular and $c(\emptyset) = 0$.

In order to prove the lower bound for the generalized distorted greedy algorithm, we first define the distorted objective function $\Phi_i: (k+1)^U \to \mathbb{R}$, for $m \in \mathbb{N}$ and $0 \leq i \leq m-1$, that 
$$\Phi_i (\mathbf{S}) := (1 - m^{-1})^{m-i} g(\mathbf{S}) - c(\mathbf{S}).$$

We also denote $\Psi_i : (k+1)^U \times \llbracket k \rrbracket \times U \to \mathbb{R}$ that
$$\Psi_i (\mathbf{S},j, e) := \max\{0, (1 - m^{-1})^{m - (i+1)} \Delta_{e, j}g(\mathbf{S}) - c(\{e\})\}.$$

\begin{lemma}\label{lem:gen_dist_grdy_lem1}
The difference of the distorted objective function of two iterations can be written as
    $$\Phi_{i+1}(\mathbf{S}_{i+1}) - \Phi_i(\mathbf{S}_i) = \Psi_i (\mathbf{S}_i, j^*,  e^*) + \frac{1}{m} \left(1 - \frac{1}{m}\right)^{m - (i+1)} g(\mathbf{S}_i).$$
\end{lemma}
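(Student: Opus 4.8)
The plan is to compute the difference $\Phi_{i+1}(\mathbf{S}_{i+1}) - \Phi_i(\mathbf{S}_i)$ directly from the definition by splitting it into a ``$g$-part'' and a ``$c$-part'' and tracking how the iteration updates the current tuple. First I would write
\begin{align*}
    \Phi_{i+1}(\mathbf{S}_{i+1}) - \Phi_i(\mathbf{S}_i)
    &= (1 - m^{-1})^{m - (i+1)} g(\mathbf{S}_{i+1}) - (1 - m^{-1})^{m - i} g(\mathbf{S}_i) \\
    &\quad - \bigl( c(\mathbf{S}_{i+1}) - c(\mathbf{S}_i) \bigr),
\end{align*}
and then insert and subtract the term $(1 - m^{-1})^{m-(i+1)} g(\mathbf{S}_i)$ so that the $g$-part becomes
$$(1 - m^{-1})^{m-(i+1)} \bigl( g(\mathbf{S}_{i+1}) - g(\mathbf{S}_i) \bigr) + \bigl( (1 - m^{-1})^{m-(i+1)} - (1 - m^{-1})^{m-i} \bigr) g(\mathbf{S}_i).$$
The second bracket factors as $(1 - m^{-1})^{m-(i+1)}\bigl(1 - (1 - m^{-1})\bigr) = \tfrac{1}{m}(1 - m^{-1})^{m-(i+1)}$, which already produces the claimed tail term $\tfrac{1}{m}(1 - m^{-1})^{m-(i+1)} g(\mathbf{S}_i)$.

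It then remains to show that
$$(1 - m^{-1})^{m-(i+1)} \bigl( g(\mathbf{S}_{i+1}) - g(\mathbf{S}_i) \bigr) - \bigl( c(\mathbf{S}_{i+1}) - c(\mathbf{S}_i) \bigr) = \Psi_i(\mathbf{S}_i, j^*, e^*).$$
Here I would do a case split according to the \textbf{if} branch in Algorithm \ref{alg:generalized_distorted_greedy}. If the distorted marginal gain $(1 - m^{-1})^{m-(i+1)}\Delta_{e^*, j^*} g(\mathbf{S}_i) - c(\{e^*\})$ is positive, then $\mathbf{S}_{i+1}$ differs from $\mathbf{S}_i$ only by adding $e^*$ to coordinate $j^*$, so $g(\mathbf{S}_{i+1}) - g(\mathbf{S}_i) = \Delta_{e^*, j^*} g(\mathbf{S}_i)$ by definition of the marginal gain, and $c(\mathbf{S}_{i+1}) - c(\mathbf{S}_i) = c(\{e^*\})$ by modularity of $c$ together with $c(\emptyset) = 0$; hence the left-hand side equals the (positive) quantity inside the max, which equals $\Psi_i(\mathbf{S}_i, j^*, e^*)$. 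If instead the marginal gain is nonpositive, then $\mathbf{S}_{i+1} = \mathbf{S}_i$, both differences vanish, and $\Psi_i(\mathbf{S}_i, j^*, e^*) = \max\{0, \text{(nonpositive)}\} = 0$, so equality holds again. Combining the two parts gives the lemma.

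I do not expect a genuine obstacle here; the statement is essentially a bookkeeping identity and the only points requiring a little care are (i) the algebraic factoring of the geometric-weight difference into the $\tfrac{1}{m}$ term, and (ii) making sure the modularity argument for $c$ is invoked correctly — namely that for $\mathbf{S}_{i+1} = \mathbf{S}_i$ with $e^*$ adjoined to one coordinate, $c(\mathbf{S}_{i+1}) = c(\mathbf{S}_i) + c(\{e^*\}) - c(\emptyset) = c(\mathbf{S}_i) + c(\{e^*\})$, which uses both that $c$ is modular (so increments are additive over disjoint additions) and the normalization $c(\emptyset) = 0$, and that $e^* \notin S_{i, j^*}$ by the choice $e^* \in V_{j^*} \backslash S_{i, j^*}$. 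The monotonicity and $k$-submodularity of $g$ are not needed for this particular lemma; they will enter only in the subsequent estimates bounding $\Psi_i$ from below in terms of $\mathbf{OPT}$.
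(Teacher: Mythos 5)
Your proposal is correct and follows essentially the same route as the paper's proof: the same splitting of the geometric weight $(1-m^{-1})^{m-i} = (1-m^{-1})^{m-(i+1)}(1-m^{-1})$ to extract the $\tfrac{1}{m}(1-m^{-1})^{m-(i+1)}g(\mathbf{S}_i)$ term, followed by the same case split on the algorithm's acceptance condition using $g(\mathbf{S}_{i+1})-g(\mathbf{S}_i)=\Delta_{e^*,j^*}g(\mathbf{S}_i)$ and $c(\mathbf{S}_{i+1})-c(\mathbf{S}_i)=c(\{e^*\})$ in the accepting case and $\Psi_i=0$ otherwise. No gaps.
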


\begin{proof}
    Similar to Lemma 1 of \cite{pmlr-v97-harshaw19a}, we can show
    \begin{align*}
        \Phi_{i+1}(\mathbf{S}_{i+1}) - \Phi_i(\mathbf{S}_i) &= \left(1-\dfrac{1}{m}\right)^{m- (i+1)} g(\mathbf{S}_{i+1}) - c(\mathbf{S}_{i+1}) - \left(1-\dfrac{1}{m}\right)^{m-i} g(\mathbf{S}_i) + c(\mathbf{S}_i)\\
        &= \left(1-\dfrac{1}{m}\right)^{m- (i+1)} g(\mathbf{S}_{i+1}) - c(\mathbf{S}_{i+1}) - \left(1-\dfrac{1}{m}\right)^{m- (i+1)} \left(1-\dfrac{1}{m}\right) g(\mathbf{S}_i) + c(\mathbf{S}_i)\\
        &= \left(1-\dfrac{1}{m}\right)^{m- (i+1)} (g(\mathbf{S}_{i+1}) - g(\mathbf{S}_i)) - (c(\mathbf{S}_{i+1}) - c(\mathbf{S}_i))\\
        &\quad + \frac{1}{m} \left(1-\dfrac{1}{m}\right)^{m-(i+1)} g(\mathbf{S}_i).
    \end{align*}
    If $(1-m^{-1})^{m - (i+1)} \Delta_{e^*, j^*}g(\mathbf{S}) - c(\{e^*\}) > 0$, then $e^*$ is added to the solution set. In the algorithm we have $e^* \in V_{j^*} \backslash S_{i, j^*}$, $g(\mathbf{S}_{i+1}) - g(\mathbf{S}_i) = \Delta_{e^*, j^*} g(\mathbf{S}_i)$, $c(\mathbf{S}_{i+1}) - c(\mathbf{S}_i) = c(\{e^*\})$, hence
    $$\Phi_{i+1}(\mathbf{S}_{i+1}) - \Phi_i(\mathbf{S}_i) = \Psi_i(\mathbf{S}_i, j^*, e^*) + \frac{1}{m} \left(1-\dfrac{1}{m}\right)^{m-(i+1)} g(\mathbf{S}_i).$$
    If $(1 - m^{-1})^{m - (i+1)} \Delta_{e^*, j^*}g(\mathbf{S}) - c(\{e_i\}) \leq 0$, the algorithm does not add $e^*$ into the solution set, hence $\mathbf{S}_{i+1} = \mathbf{S}_i$. In this case, we also have
    $$\Phi_{i+1}(\mathbf{S}_{i+1}) - \Phi_i(\mathbf{S}_i) = 0 + \frac{1}{m} \left(1-\dfrac{1}{m}\right)^{m-(i+1)} g(\mathbf{S}_i) = \Psi_i(\mathbf{S}_i, j^*, e^*) + \frac{1}{m} \left(1-\dfrac{1}{m}\right)^{m-(i+1)} g(\mathbf{S}_i).$$
    Summarizing these two cases, we see that
    $$\Phi_{i+1}(\mathbf{S}_{i+1}) - \Phi_i(\mathbf{S}_i) = \Psi_i (\mathbf{S}_i, j^*,  e^*) + \frac{1}{m} \left(1-\dfrac{1}{m}\right)^{m - (i+1)} g(\mathbf{S}_i).$$
\end{proof}

\begin{lemma} \label{lem:gen_dist_grdy_lem2}
A lower bound for $\Psi_i$ is
    $$\Psi_i (\mathbf{S}_i, j^*, e^*) \geq \frac{1}{m} \Bigg(\left(1-\dfrac{1}{m}\right)^{m - (i+1)} \big(g(\mathbf{OPT}) - g(\mathbf{S}_i)\big) - c(\mathbf{OPT})\Bigg).$$
\end{lemma}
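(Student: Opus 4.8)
The plan is to mirror the proof of Lemma~2 of \cite{pmlr-v97-harshaw19a}, adapting each step to the $k$-submodular constrained setting. Write $\gamma := 1 - m^{-1}$ and abbreviate $\mathbf{O} := \mathbf{OPT} = (O_1,\dots,O_k)$. Two ingredients are needed: (i) a $k$-submodular analogue of the telescoping inequality $\sum_{e}\Delta_{e}g \ge g(\mathbf{O}) - g(\mathbf{S}_i)$, and (ii) an averaging argument over the at most $m$ elements of $\mathrm{supp}(\mathbf{O})$ that invokes the definition of the greedy pair $(j^*,e^*)$, followed by a short case split to handle the truncation $\Psi_i = \max\{0,\cdot\}$ and to produce the factor $1/m$.

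For (i), the key structural observation is that $\mathbf{V}$, being an element of $(k+1)^U$, has pairwise disjoint coordinates, so each $e \in \mathrm{supp}(\mathbf{V})$ lies in a unique $V_j$; together with $\mathbf{S}_i \preceq \mathbf{V}$ and $\mathbf{O} \preceq \mathbf{V}$ this implies that, for $e \in O_j$, one has $e \in \mathrm{supp}(\mathbf{S}_i)$ if and only if $e \in S_{i,j}$. Consequently the coordinatewise union $\mathbf{T} := (S_{i,1}\cup O_1, \dots, S_{i,k}\cup O_k)$ is again a legitimate element of $(k+1)^U$ satisfying $\mathbf{S}_i \preceq \mathbf{T} \preceq \mathbf{V}$ and $\mathbf{O} \preceq \mathbf{T}$. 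Enumerate $\mathrm{supp}(\mathbf{O}) \setminus \mathrm{supp}(\mathbf{S}_i) = \{a_1,\dots,a_r\}$, where $r \le m$ since $|\mathrm{supp}(\mathbf{O})| \le m$, and let $\sigma(a_p) \in \llbracket k \rrbracket$ be the unique index with $a_p \in O_{\sigma(a_p)}$. Build the chain $\mathbf{S}_i = \mathbf{T}^{(0)} \preceq \mathbf{T}^{(1)} \preceq \cdots \preceq \mathbf{T}^{(r)} = \mathbf{T}$ by adjoining $a_p$ to the $\sigma(a_p)$-th coordinate at step $p$. Telescoping gives $g(\mathbf{T}) - g(\mathbf{S}_i) = \sum_{p=1}^{r} \Delta_{a_p,\sigma(a_p)} g(\mathbf{T}^{(p-1)})$, and orthant submodularity of $g$ (applicable because $\mathbf{S}_i \preceq \mathbf{T}^{(p-1)}$ and $a_p \notin \mathrm{supp}(\mathbf{T}^{(p-1)})$) bounds each summand above by $\Delta_{a_p,\sigma(a_p)} g(\mathbf{S}_i)$; monotonicity then yields $\sum_{p=1}^{r} \Delta_{a_p,\sigma(a_p)} g(\mathbf{S}_i) \ge g(\mathbf{T}) - g(\mathbf{S}_i) \ge g(\mathbf{O}) - g(\mathbf{S}_i)$.

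For (ii), put $M := \gamma^{m-(i+1)} \Delta_{e^*,j^*} g(\mathbf{S}_i) - c(\{e^*\})$, so that $\Psi_i(\mathbf{S}_i, j^*, e^*) = \max\{0, M\}$. For each $p$ the pair $(\sigma(a_p), a_p)$ is feasible in the $\argmax$ defining $(j^*,e^*)$ — indeed $a_p \in O_{\sigma(a_p)} \subseteq V_{\sigma(a_p)}$ and $a_p \notin \mathrm{supp}(\mathbf{S}_i) \supseteq S_{i,\sigma(a_p)}$ — hence $M \ge \tau_p := \gamma^{m-(i+1)} \Delta_{a_p,\sigma(a_p)} g(\mathbf{S}_i) - c(\{a_p\})$. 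Summing over $p$, applying the telescoping bound from (i), and using that $c$ is modular, non-negative and $c(\emptyset)=0$ (so $\sum_{p=1}^{r} c(\{a_p\}) \le \sum_{e \in \mathrm{supp}(\mathbf{O})} c(\{e\}) = c(\mathbf{O})$), one gets $\sum_{p=1}^{r} \tau_p \ge \gamma^{m-(i+1)}\big(g(\mathbf{O}) - g(\mathbf{S}_i)\big) - c(\mathbf{O}) =: R$. Finally, split into cases: if $R \le 0$ the desired bound $\Psi_i \ge R/m$ is immediate from $\Psi_i \ge 0$; if $R > 0$ then $\sum_{p=1}^{r} \tau_p \ge R > 0$ forces $\max_p \tau_p > 0$, hence $M \ge \max_p \tau_p > 0$, and then $R \le \sum_{p=1}^{r} \tau_p \le r M \le m M$, so $\Psi_i \ge M \ge R/m$, which is exactly the claim.

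The step I expect to be the main obstacle is ingredient (i): in contrast with the plain submodular case, one must verify that the interpolating tuple $\mathbf{T}$ is a bona fide element of $(k+1)^U$ with $\mathbf{O} \preceq \mathbf{T} \preceq \mathbf{V}$, which is what licenses the use of orthant submodularity along the chain and of monotonicity at the end; the pairwise disjointness of the coordinates of $\mathbf{V}$ is precisely the property that makes this valid, and it is worth recording as a short preliminary remark. (Equivalently, restricting attention to tuples $\preceq \mathbf{V}$ identifies problem \eqref{eq:mathbfOPT} with ordinary submodular maximization of the ``flattened'' objective on the typed ground set $\{(e,j): e \in V_j\}$, under which Algorithm~\ref{alg:generalized_distorted_greedy} becomes Algorithm~\ref{alg:distorted_greedy} and Lemma~\ref{lem:gen_dist_grdy_lem2} reduces verbatim to Lemma~2 of \cite{pmlr-v97-harshaw19a}; the direct argument above merely unfolds this reduction.) The remaining steps — averaging over the $\le m$ feasible candidates, the bookkeeping for $c$, and the sign case split — are routine.
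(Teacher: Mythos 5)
Your proof is correct, and its outer skeleton (compare the greedy pair against every candidate coming from $\mathbf{OPT}$, average over at most $m$ such candidates, absorb the cost term using non-negativity and modularity of $c$, and handle the $\max\{0,\cdot\}$ truncation) matches the paper's proof of Lemma~\ref{lem:gen_dist_grdy_lem2}. Where you genuinely diverge is in how the central inequality $\sum_{j}\sum_{e\in O_j\setminus S_{i,j}}\Delta_{e,j}g(\mathbf{S}_i)\ge g(\mathbf{OPT})-g(\mathbf{S}_i)$ is obtained: the paper first applies Lemma~1.1 of \cite{lee2010submodular} coordinatewise and then runs a $k$-step induction with the $\sqcup$/$\sqcap$ exchange inequality to prove \eqref{eq:sum}, finishing with monotonicity and $\mathbf{OPT}\preceq\mathbf{S}_i\sqcup\mathbf{U}_i$, whereas you build a single telescoping chain from $\mathbf{S}_i$ to the coordinatewise union $\mathbf{T}$, bound each increment by orthant submodularity, and invoke monotonicity once at the end. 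Your route is more elementary and self-contained (it needs only orthant submodularity plus monotonicity, not the full $k$-submodular exchange, and no external lemma), and your preliminary observation that the coordinates of any element of $(k+1)^U$ are pairwise disjoint — so that $\mathbf{T}\preceq\mathbf{V}$ is a legitimate tuple and $e\in O_j$ lies in $\mathrm{supp}(\mathbf{S}_i)$ iff $e\in S_{i,j}$ — is exactly the point that licenses the chain; the paper's proof relies on the same disjointness implicitly (in the identity $(S_{i,n}\cup U_{i,n})=(S_{i,n}\cup U_{i,n})\setminus(\cup_{l\neq n}(S_{i,l}\cup U_{i,l}))$) but exercises the $k$-submodular definition directly, which keeps it closer in spirit to the general $k$-submodular toolbox. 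Your sign case split on $R$ versus the paper's threading of the truncated maximum through the chain of inequalities is an immaterial difference; both are sound, and your flattening remark (reduction to Lemma~2 of \cite{pmlr-v97-harshaw19a} on the typed ground set) is a fair intuition for why the bound carries over verbatim.
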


\begin{proof}
    For $j \in \llbracket k \rrbracket$, let
    \begin{align*}
        U_{i,j} &:= (V_j \backslash S_{i,j}) \cap \mathrm{OPT}_j, \\
        U_i &:= \bigcup_{j=1}^k U_{i,j}, \\
        \mathbf{U}_i &:= (U_{i,1},U_{i,2},\ldots,U_{i,k}),
    \end{align*}
    and hence
    \begin{align}\label{eq:SijcupOPTj}
        S_{i,j} \cup U_{i,j} = S_{i,j} \cup \mathrm{OPT}_j.
    \end{align}
    We then have
    \begin{align*}
        m \Psi_i (\mathbf{S}_i,  j^*, e^*) &= m \max_{j \in \llbracket k \rrbracket, e\in V_j \backslash S_{i, j}}\Bigg\{0, \left(1-\dfrac{1}{m}\right)^{m - (i+1)} \Delta_{e, j}g(\mathbf{S}_i) - c(\{e\})\Bigg\}\\
        & \geq |\mathrm{supp}(\mathbf{OPT})| \max_{j \in \llbracket k \rrbracket, e\in U_{i,j}} \Bigg\{0, \left(1-\dfrac{1}{m}\right)^{m - (i+1)} \Delta_{e, j}g(\mathbf{S}_i) - c(\{e\})\Bigg\} \\
        & \geq |U_i| \max_{j \in \llbracket k \rrbracket, e\in U_{i,j}} \Bigg\{\left(1-\dfrac{1}{m}\right)^{m - (i+1)} \Delta_{e, j}g(\mathbf{S}_i) - c(\{e\})\Bigg\}\\
        & \geq \sum_{j=1}^k \sum_{e \in U_{i,j}} \Bigg(\left(1-\dfrac{1}{m}\right)^{m - (i+1)} \Delta_{e, j}g(\mathbf{S}_i) - c(\{e\})\Bigg)\\
        &= \left(1-\dfrac{1}{m}\right)^{m - (i+1)} \sum_{j=1}^k \sum_{e\in U_{i,j}}\Delta_{e, j}g(\mathbf{S}_i) - c(\mathbf{U}_i) \\
        &\geq \left(1-\dfrac{1}{m}\right)^{m - (i+1)} \sum_{j=1}^k \sum_{e\in U_{i,j}}\Delta_{e, j}g(\mathbf{S}_i) - c(\mathbf{OPT}),
    \end{align*}
    where the last inequality follows from the fact that $c$ is non-negative. Then, the desired result follows if we show that 
    \begin{align*}
        \sum_{j=1}^k \sum_{e\in U_{i,j}}\Delta_{e, j}g(\mathbf{S_i}) \geq g(\mathbf{OPT}) - g(\mathbf{S}_i).
    \end{align*} 
    Since $g$ is orthant submodular, by Lemma 1.1 of \cite{lee2010submodular}, we have
    \begin{align*}
        \sum_{e\in U_{i,j}}\Delta_{e, j}g(\mathbf{S_i}) \geq g(S_{i, 1}, \ldots, S_{i, j-1}, S_{i, j} \cup U_{i,j}, S_{i, j+1}, \ldots, S_k) - g(\mathbf{S}_i),
    \end{align*}
    and hence it further suffices to prove
    \begin{align}\label{eq:sum}
        \sum_{j=1}^k g(S_{i, 1}, \ldots, S_{i, j-1}, S_{i, j} \cup U_{i,j}, S_{i, j+1}, \ldots, S_k) \geq g(\mathbf{OPT}) + (k-1) g(\mathbf{S}_i).
    \end{align}
    Since $g$ is $k$-submodular, then $$g(\mathbf{X}) + g(\mathbf{Y}) \geq g(\mathbf{X} \sqcup \mathbf{Y}) + g(\mathbf{X} \sqcap \mathbf{Y}),$$ for any $\mathbf{X}, \mathbf{Y} \in (k+1)^U$.
    We seek to apply this definition to update each of the $k$ coordinates by adding $(U_{i,j})_{j=1}^k$ sequentially. For the first step, we have
    \begin{align*}
        &\quad g(S_{i, 1} \cup U_{i,1}, S_{i,2}, \ldots, S_{i, k}) + g(S_{i, 1}, S_{i, 2} \cup U_{i,2}, S_{i, 3}, \ldots, S_{i, k}) \\
        &\geq g((S_{i, 1} \cup U_{i,1}) \backslash (\cup_{l\neq 1}^k S_{i,l} \cup U_{i,2}), (S_{i, 2} \cup U_{i,2}) \backslash (\cup_{l\neq 2}^k S_{i,l} \cup U_{i,1}),  S_{i, 3}, \ldots, S_{i, k}) + g(\mathbf{S}_i) \\
        &= g(S_{i, 1} \cup U_{i,1},S_{i, 2} \cup U_{i,2},S_{i,3},\ldots,S_{i,k}) + g(\mathbf{S}_i),
    \end{align*}
    where the last equality uses the fact that with $n \in \llbracket k \rrbracket$,
    \begin{align*}
        (S_{i, n} \cup U_{i,n}) = (S_{i, n} \cup U_{i,n}) \backslash (\cup_{l\neq n}^k (S_{i,l} \cup U_{i,l})).
    \end{align*}
    In the $n$-th step with $n \in \llbracket k \rrbracket$, we thus have
    \begin{align*}
        &\quad g(S_{i, 1}\cup U_{i,1}, \ldots, S_{i, n}\cup U_{i,n}, \ldots, S_{i, k}) + g(S_{i, 1}, \ldots, S_{i, n}, S_{i, n+1}\cup U_{i,n+1}, \ldots, S_{i, k})\\
        &\geq g(S_{i, 1} \cup U_{i,1}, \ldots, S_{i, n+1} \cup U_{i,n+1}, \ldots, S_{i, k}) + g(\mathbf{S}_i).
    \end{align*}
    Repeating the above analysis leads to $$\sum_{j=1}^k g(S_{i, 1}, \ldots, S_{i, j-1}, S_{i, j} \cup U_{i,j}, S_{i, j+1}, \ldots, S_k) \geq g(\mathbf{S}_i \sqcup \mathbf{U}_i) + (k-1) g(\mathbf{S}_i).$$
    Finally, using the assumption that $g$ is monotonically non-decreasing and $\mathbf{OPT} \preceq \mathbf{S}_i \sqcup \mathbf{U}_i$ in view of \eqref{eq:SijcupOPTj}, we have $$\sum_{j=1}^k g(S_{i, 1}, \ldots, S_{i, j-1}, S_{i, j} \cup U_{i,j}, S_{i, j+1}, \ldots, S_k) \geq g(\mathbf{OPT}) + (k-1) g(\mathbf{S}_i),$$
    and hence \eqref{eq:sum} holds.
\end{proof}

Finally, we prove the lower bound for the generalized distorted greedy algorithm \textcolor{black}{in Theorem~\ref{thm:lb_gen_distgrdy}.}

\begin{proof}[\textcolor{black}{Proof of Theorem~\ref{thm:lb_gen_distgrdy}}]
    According to our assumptions, we have $$\Phi_0 (\mathbf{S}_0) = \left(1-\dfrac{1}{m}\right)^m g(\emptyset) - c(\emptyset) \geq 0$$ and $$\Phi_m(\mathbf{S}_m) = \left(1-\dfrac{1}{m}\right)^0 g(\mathbf{S}_m) - c(\mathbf{S}_m) = g(\mathbf{S}_m) - c(\mathbf{S}_m).$$
    Therefore, we have 
    \begin{equation}\label{eq:lw_bd}
        g(\mathbf{S}_m) - c(\mathbf{S}_m) \geq \Phi_m (\mathbf{S}_m) - \Phi_0 (\mathbf{S}_0) = \sum_{i=0}^{m-1} \Phi_{i+1}(\mathbf{S}_{i+1}) - \Phi_i (\mathbf{S}_i).
    \end{equation}
    We apply Lemma \ref{lem:gen_dist_grdy_lem1} and \ref{lem:gen_dist_grdy_lem2} to yield
    \begin{align*}
        \Phi_{i+1}(\mathbf{S}_{i+1}) - \Phi_i(\mathbf{S}_i) &= \Psi_i (\mathbf{S}_i, j^*,  e^*) + \frac{1}{m} \left(1-\dfrac{1}{m}\right)^{m - (i+1)} g(\mathbf{S}_i)\\
        &\geq \frac{1}{m} \left(1-\dfrac{1}{m}\right)^{m - (i+1)} g(\mathbf{OPT}) - \frac{1}{m} c(\mathbf{OPT}).
    \end{align*}
    We plug the above bound into \eqref{eq:lw_bd} to obtain
    \begin{align*}
        g(\mathbf{S}_m) - c(\mathrm{supp}(\mathbf{S}_m)) &\geq \sum_{i=0}^{m-1} \Bigg[\frac{1}{m} \left(1-\dfrac{1}{m}\right)^{m - (i+1)} g(\mathbf{OPT}) - \frac{1}{m} c(\mathbf{OPT})\Bigg]\\
        &= \Bigg[\frac{1}{m} \sum_{i=0}^{m-1}\left(1-\dfrac{1}{m}\right)^i\Bigg] g(\mathbf{OPT}) - c(\mathbf{OPT})\\
        &= \Bigg(1 - \left(1-\dfrac{1}{m}\right)^m\Bigg) g(\mathbf{OPT}) - c(\mathbf{OPT}) \\
        &\geq (1 - e^{-1})g(\mathbf{OPT}) - c(\mathbf{OPT}).
    \end{align*}
\end{proof}

\section{Submodular maximization of the entropy rate $H(P^{(S)})$}\label{sec:entropy_rate}

Given $P\in \mathcal{L(\mathcal{X})}$ and $m \in \mathbb{N}$, we aim to investigate the following submodular maximization problem with cardinality constraint: 
\begin{align}\label{eq:maxentropyrate}
    \max_{S \subseteq \llbracket d \rrbracket;~ |S| \leq m} H(P^{(S)}).
\end{align}
From Theorem \ref{thm:submod_mc}, the map
$S \mapsto H(P^{(S)})$ is submodular but generally not monotonically non-decreasing. Since the widely-used heuristic greedy algorithm is near-optimal only when the objective submodular function is monotonically non-decreasing (see Section 4 of \cite{MR503866}), in this regard our problem does not have a classical greedy-based approximation guarantee. On the other hand, since $H(P^{(S)}) \geq 0$ and $H(P^{(\emptyset)}) = 0$, if we consider the unconstrained maximization problem of \eqref{eq:maxentropyrate}, we can apply Algorithm~\ref{alg:local_search} with $\left(\frac{1}{3} - \frac{\epsilon}{d}\right)$-approximation guarantee (see Theorem~\ref{thm:lb_local_search}).

Instead, we consider \begin{align*}
    H(P) &= H(\pi \boxtimes P) - H(\pi),
\end{align*}
where we define the edge measure of $P$ with respect to $\pi$ as $(\pi \boxtimes P)(x,y) := \pi(x) P(x,y)$ and \textcolor{black}{$\pi \boxtimes P$ is a probability distribution on $\mathcal{X} \times \mathcal{X}$.}

Then, the map 
\begin{align}\label{eq:map_productform_entropyrate}
    S \mapsto H(P^{(S)}) = H(\pi^{(S)} \boxtimes P^{(S)}) - H(\pi^{(S)})
\end{align} 
can be considered as a monotonically non-decreasing submodular function $H(\pi^{(S)} \boxtimes P^{(S)})$ \textcolor{black}{(see e.g. \cite{F78})} minus a non-negative modular function $H(\pi^{(S)})$ if we assume $\pi$ to be of product form. This fits into the setting of the distorted greedy as in Algorithm \ref{alg:distorted_greedy}, and leads us to Corollary~\ref{cor:productform_entropyrate}.

\begin{corollary}\label{cor:productform_entropyrate}
    Let $P \in \mathcal{L}(\mathcal{X})$ be $\pi$-stationary where $\pi$ is of product form. In Algorithm \ref{alg:distorted_greedy}, we take $g(S) = H(\pi^{(S)} \boxtimes P^{(S)})$, $c(S) = H(\pi^{(S)})$, and $\mathrm{OPT} = \argmax_{S \subseteq \llbracket d \rrbracket;~ |S| \leq m} H(P^{(S)})$. Therefore, Theorem \ref{thm:lb_distgrdy} gives
    \begin{align*}
        H(P^{(S_m)}) \geq (1-e^{-1}) H(\pi^{(\mathrm{OPT})} \boxtimes P^{(\mathrm{OPT})}) - H(\pi^{(\mathrm{OPT})}), 
    \end{align*}
    where $S_m$ is the output of Algorithm \ref{alg:distorted_greedy}.
\end{corollary}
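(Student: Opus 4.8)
The plan is to verify that the decomposition \eqref{eq:map_productform_entropyrate} genuinely places the problem into the framework of Algorithm \ref{alg:distorted_greedy}, and then simply invoke Theorem \ref{thm:lb_distgrdy}. So the first task is to check the three structural hypotheses on $g(S) = H(\pi^{(S)} \boxtimes P^{(S)})$ and $c(S) = H(\pi^{(S)})$ required by the algorithm: that $g$ is submodular and monotonically non-decreasing with $g(\emptyset) \geq 0$, and that $c$ is non-negative and modular. Non-negativity of both is immediate since Shannon entropy of a probability distribution is always $\geq 0$, and $g(\emptyset) = H(\pi^{(\emptyset)} \boxtimes P^{(\emptyset)}) = 0 \geq 0$ by the stated conventions. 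Modularity of $c$ is where the product-form assumption on $\pi$ enters: if $\pi = \otimes_{i=1}^d \pi_i$, then $\pi^{(S)} = \otimes_{i \in S} \pi_i$, so $H(\pi^{(S)}) = \sum_{i \in S} H(\pi_i)$, which is manifestly modular (indeed additive over $S$) and non-negative.

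The remaining structural fact is that $g(S) = H(\pi^{(S)} \boxtimes P^{(S)})$ is submodular and monotonically non-decreasing in $S$. Submodularity should follow from the same circle of ideas behind Theorem \ref{thm:submod_mc}: writing $H(P^{(S)}) = H(\pi^{(S)} \boxtimes P^{(S)}) - H(\pi^{(S)})$, and noting that $S \mapsto H(P^{(S)})$ is submodular (Theorem \ref{thm:submod_mc}) while $S \mapsto H(\pi^{(S)})$ is modular under the product-form assumption, we get $g(S) = H(P^{(S)}) + H(\pi^{(S)})$ is submodular as a sum of a submodular and a modular function. Monotonicity of $g$ is intuitively clear because enlarging $S$ enlarges the product state space on which the joint edge measure lives, and Shannon entropy of a marginal is dominated by the entropy of the joint; I would make this precise by observing that $\pi^{(S)} \boxtimes P^{(S)}$ is a marginal (a coordinate projection) of $\pi^{(T)} \boxtimes P^{(T)}$ when $S \subseteq T$, and invoking the standard fact that marginalization does not increase entropy. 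One should double-check that the keep-$S$-in construction is compatible with this marginalization claim — i.e. that $(\pi^{(T)} \boxtimes P^{(T)})$ restricted to the $S$-coordinates of both the $x$ and $y$ blocks equals $\pi^{(S)} \boxtimes P^{(S)}$ — which follows from the definition of $P_\pi^{(-\cdot)}$ as a $\pi$-weighted marginal and the fact that $P$ is $\pi$-stationary.

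Once these checks are in place, the proof is essentially a one-line application: set $g, c, \mathrm{OPT}$ as in the statement, observe the hypotheses of Theorem \ref{thm:lb_distgrdy} are met, and read off
\[
g(S_m) - c(S_m) = H(P^{(S_m)}) \geq (1 - e^{-1}) g(\mathrm{OPT}) - c(\mathrm{OPT}) = (1-e^{-1}) H(\pi^{(\mathrm{OPT})} \boxtimes P^{(\mathrm{OPT})}) - H(\pi^{(\mathrm{OPT})}),
\]
using that $g(S_m) - c(S_m) = H(P^{(S_m)})$ by \eqref{eq:map_productform_entropyrate}. The main obstacle, such as it is, is the monotonicity of $g$: unlike $H(P^{(S)})$ itself (which is non-monotone, as the paper notes), $g$ must be monotone for the distorted greedy guarantee to apply, and making the marginalization argument airtight requires care with the precise definition of the keep-$S$-in transition matrix and the role of $\pi$-stationarity — everything else is bookkeeping with entropy identities and the product-form hypothesis. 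It may also be worth remarking that the cardinality constraint $m$ and ground set $U = \llbracket d \rrbracket$ transfer verbatim.
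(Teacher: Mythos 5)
Your proposal is correct and follows essentially the same route as the paper: rewrite $H(P^{(S)}) = H(\pi^{(S)} \boxtimes P^{(S)}) - H(\pi^{(S)})$, use the product-form hypothesis to make $c(S)=H(\pi^{(S)})$ non-negative modular and $g(S)=H(\pi^{(S)}\boxtimes P^{(S)})$ monotone non-decreasing submodular, and then invoke Theorem \ref{thm:lb_distgrdy} with $\mathrm{OPT}$ unchanged since $g-c$ equals the original objective. Your extra verification of the structural hypotheses on $g$ (submodularity via $g = H(P^{(S)}) + H(\pi^{(S)})$ and monotonicity via marginalization of the edge measure) is sound and simply makes explicit what the paper asserts in the paragraph preceding the corollary.
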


More generally for $P$ with non-product-form $\pi$ as stationary distribution, in view of Theorem \ref{thm:monotonize}, for any $\beta \in \mathbb{R}$ we have a monotonically non-decreasing submodular $g$ given by
\begin{align}\label{eq:gentropyrate}
    g(S) = H(P^{(S)}) - \beta + \sum_{e \in S} (H(P^{(-e)}) - H(P)),
\end{align} 
and we also denote the following modular function
\begin{align}\label{eq:centropyrate}
    c(S) &= - \beta + \sum_{e \in S} (H(P^{(-e)}) - H(P)) \\
         &= - \beta + \sum_{e \in S} (D(P \| P^{(e)} \otimes P^{(-e)}) - H(P^{(e)})). \nonumber
\end{align} 
As $H(P^{(e)}) \leq \log |\mathcal{X}^{(e)}|$, $c$ is ensured to be non-negative if $\beta \leq - \sum_{i=1}^d \log |\mathcal{X}^{(i)}|$. Since 
$$H(P^{(S)}) = g(S) - c(S),$$ we can employ Algorithm \ref{alg:distorted_greedy} to perform distorted greedy maximization with a lower bound.
\begin{corollary}\label{cor:entropyrate}
    Let $P \in \mathcal{L}(\mathcal{X})$ be $\pi$-stationary. In Algorithm \ref{alg:distorted_greedy}, we take $g$ as in \eqref{eq:gentropyrate}, $c$ as in \eqref{eq:centropyrate}, $\beta \leq - \sum_{i=1}^d \log |\mathcal{X}^{(i)}|$, and $\mathrm{OPT} = \argmax_{S \subseteq \llbracket d \rrbracket;~ |S| \leq m} H(P^{(S)})$. Therefore, Theorem \ref{thm:lb_distgrdy} gives
    \begin{align*}
        H(P^{(S_m)}) \geq (1-e^{-1}) g(\mathrm{OPT}) - c(\mathrm{OPT}), 
    \end{align*}
    where $S_m$ is the output of Algorithm \ref{alg:distorted_greedy}.
\end{corollary}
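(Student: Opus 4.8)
The plan is simply to check that $g$ from \eqref{eq:gentropyrate} and $c$ from \eqref{eq:centropyrate} satisfy the hypotheses of Theorem \ref{thm:lb_distgrdy}; the stated bound is then immediate. Throughout, the ground set is $U = \llbracket d \rrbracket$, so that $P^{(\llbracket d \rrbracket)} = P$ and $P^{(\llbracket d \rrbracket \setminus \{e\})} = P^{(-e)}$.

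First I would observe that $f(S) := H(P^{(S)})$ is submodular by the first item of Theorem \ref{thm:submod_mc}, and then apply Theorem \ref{thm:monotonize} to this $f$ with the given $\beta$. Since $f(U \setminus \{e\}) - f(U) = H(P^{(-e)}) - H(P)$, the function produced by Theorem \ref{thm:monotonize} is precisely the $g$ of \eqref{eq:gentropyrate}, and Theorem \ref{thm:monotonize} certifies that it is monotonically non-decreasing and submodular. One must also check $g(\emptyset) \geq 0$: because $P^{(\emptyset)} = 1$ gives $H(P^{(\emptyset)}) = 0$ and the sum in \eqref{eq:gentropyrate} is empty, $g(\emptyset) = -\beta$, which is non-negative since $\beta \leq -\sum_{i=1}^d \log|\mathcal{X}^{(i)}| \leq 0$.

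Next I would verify the conditions on $c$. It is modular, being a constant plus a sum of singleton terms. For non-negativity I would use the second line of \eqref{eq:centropyrate}: since each $D(P \| P^{(e)} \otimes P^{(-e)}) \geq 0$ and $H(P^{(e)}) \leq \log|\mathcal{X}^{(e)}|$, we get $c(S) \geq -\beta - \sum_{e \in S} \log|\mathcal{X}^{(e)}| \geq -\beta - \sum_{i=1}^d \log|\mathcal{X}^{(i)}| \geq 0$, where the final inequality is exactly the standing assumption on $\beta$.

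To conclude, note that by construction $g(S) - c(S) = H(P^{(S)})$ for every $S \subseteq \llbracket d \rrbracket$, so the maximizer of $g - c$ over $\{S : |S| \leq m\}$ is the same $\mathrm{OPT}$ as in the statement, and the output $S_m$ of Algorithm \ref{alg:distorted_greedy} satisfies $H(P^{(S_m)}) = g(S_m) - c(S_m)$. Theorem \ref{thm:lb_distgrdy} then gives $H(P^{(S_m)}) = g(S_m) - c(S_m) \geq (1 - e^{-1}) g(\mathrm{OPT}) - c(\mathrm{OPT})$, as claimed. I do not foresee a genuine obstacle; the only points needing care are identifying the marginal shift $H(P^{(-e)}) - H(P)$ with the generic term in Theorem \ref{thm:monotonize}, and noticing that the single choice of $\beta$ simultaneously forces $g(\emptyset) \geq 0$ and the non-negativity of $c$.
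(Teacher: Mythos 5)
Your proposal is correct and follows essentially the same route as the paper: apply Theorem \ref{thm:monotonize} to the submodular map $S \mapsto H(P^{(S)})$ to get the monotone submodular $g$ of \eqref{eq:gentropyrate}, use the identity $H(P^{(-e)}) - H(P) = D(P\|P^{(e)}\otimes P^{(-e)}) - H(P^{(e)})$ together with $H(P^{(e)}) \leq \log|\mathcal{X}^{(e)}|$ and the choice of $\beta$ to get non-negativity of the modular $c$ (and of $g(\emptyset) = -\beta$), and then invoke Theorem \ref{thm:lb_distgrdy} on the decomposition $H(P^{(S)}) = g(S) - c(S)$. No gaps; your explicit check of $g(\emptyset)\geq 0$ is a point the paper leaves implicit.
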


Note that the lower bound of Corollary \eqref{cor:entropyrate} depends on $\beta$ through $g$ and $c$. If $\beta$ is chosen to be too small, then the lower bound might be too loose as the right hand side might be negative.

\subsection{$k$-submodular maximization of the entropy rate of the tensorized keep-$S_i$-in matrices $H(\otimes_{i=1}^k P^{(S_i)})$}

In this subsection, we \textcolor{black}{examine} the following map
\begin{align}\label{map:gen_entropy_rate}
    (k+1)^{\llbracket d \rrbracket} \ni \mathbf{S} = (S_1, \ldots, S_k) \mapsto f(\mathbf{S}) = H(\otimes_{i=1}^k P^{(S_i)}) = \sum_{i=1}^k H(P^{(S_i)}),
\end{align}
and consider maximization problems of the form, for given $\mathbf{V} \in (k+1)^{\llbracket d \rrbracket}$,
\begin{align}
    \max_{\mathbf{S} \preceq \mathbf{V};~ |\mathrm{supp}(\mathbf{S})| \leq m} H(\otimes_{i=1}^k P^{(S_i)}).
\end{align}
\textcolor{black}{In other words, we are interested in designing the partition $\mathbf{S}$ subject to cardinality constraint to maximize the entropy rate of the product chain $\otimes_{i=1}^k P^{(S_i)}$.}
In the special case of $k = 1$ and $\mathbf{V} = \llbracket d \rrbracket$, we recover the problem \eqref{eq:maxentropyrate}.

First, we consider the special case where $P$ is $\pi$-stationary with $\pi$ taking on a product form. Similar to the map~\eqref{eq:map_productform_entropyrate}, we re-write the map~\eqref{map:gen_entropy_rate} as \begin{align}
    \mathbf{S} \mapsto f(\mathbf{S}) = \sum_{i=1}^k H(\pi^{(S_i)} \boxtimes P^{(S_i)}) - \sum_{i=1}^k H(\pi^{(S_i)}).
\end{align}
\textcolor{black}{As $H(\pi^{(S_i)} \boxtimes P^{(S_i)})$ is monotonically non-decreasing and submodular (recall again \cite{F78}), we make use of Corollary~\ref{lem:sum_nondecr_submod} to see that} the following function $g$ is monotonically non-decreasing and $k$-submodular
\begin{align}\label{eq:g_productform_gen_entropyrate}
    g(\mathbf{S}) = \sum_{i=1}^k H(\pi^{(S_i)} \boxtimes P^{(S_i)}).
\end{align}
Since $\pi$ is of product form, we denote the non-negative modular function $c$ as 
\begin{align}\label{eq:c_productform_gen_entropyrate}
    c(\mathbf{S}) = \sum_{i=1}^k H(\pi^{(S_i)}).
\end{align}
Therefore, we have \begin{align*}
    f(\mathbf{S}) = g(\mathbf{S}) - c(\mathbf{S}),
\end{align*}
and the distorted greedy algorithm yields an approximate \textcolor{black}{solution} with a lower bound as in Theorem~\ref{thm:lb_gen_distgrdy}.
\begin{corollary}\label{cor:productform_gen_entropyrate}
    Let $P\in \mathcal{L}(\mathcal{X})$ be $\pi$-stationary where $\pi$ is of product form. In Algorithm~\ref{alg:generalized_distorted_greedy}, we take $g$ as in~\eqref{eq:g_productform_gen_entropyrate} and $c$ as in~\eqref{eq:c_productform_gen_entropyrate}, and $\mathbf{OPT} = \argmax_{\mathbf{S} \preceq \mathbf{V};~ |\mathrm{supp}(\mathbf{S})| \leq m} f(\mathbf{S})$. Then by Theorem~\ref{thm:lb_gen_distgrdy}, we have the following lower bound \begin{align*}
        f(\mathbf{S}_m) = H(\otimes_{i=1}^k P^{(S_{m,i})}) \geq (1 - e^{-1})g(\mathbf{OPT}) - c(\mathbf{OPT}),
    \end{align*}
    where $\mathbf{S}_m = (S_{m, 1}, \ldots, S_{m, k})$ is the output of Algorithm~\ref{alg:generalized_distorted_greedy}.
\end{corollary}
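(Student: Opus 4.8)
The plan is to verify that the pair $(g,c)$ defined in \eqref{eq:g_productform_gen_entropyrate} and \eqref{eq:c_productform_gen_entropyrate} meets every hypothesis of Theorem \ref{thm:lb_gen_distgrdy} with ground set $U = \llbracket d \rrbracket$, the given $\mathbf{V}$, and cardinality $m$: that $g$ is monotonically non-decreasing and $k$-submodular with $g(\emptyset) \ge 0$, that $c$ is non-negative and modular with $c(\emptyset) = 0$, and that $f(\mathbf{S}) = g(\mathbf{S}) - c(\mathbf{S})$. Once these hold, the claimed bound $f(\mathbf{S}_m) = H(\otimes_{i=1}^k P^{(S_{m,i})}) \ge (1-e^{-1})g(\mathbf{OPT}) - c(\mathbf{OPT})$ is exactly the conclusion of Theorem \ref{thm:lb_gen_distgrdy} after substituting $g(\mathbf{S}_m) - c(\mathbf{S}_m) = f(\mathbf{S}_m)$.

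First I would dispose of $c$. Because $\pi$ is of product form we have $\pi^{(S)} = \otimes_{l \in S}\pi_l$, hence $H(\pi^{(S)}) = \sum_{l \in S} H(\pi_l)$, and therefore $c(\mathbf{S}) = \sum_{i=1}^k \sum_{l \in S_i} H(\pi_l)$ is additive over inserted elements, i.e.\ modular with $c(\{e\}) = H(\pi_e)$; it vanishes at $\emptyset$ and is non-negative since Shannon entropy is non-negative. This is the one place where the product-form hypothesis is genuinely used: in general $S \mapsto H(\pi^{(S)})$ is only submodular, not modular.

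Next I would treat $g$ together with the decomposition. The identity recalled in \eqref{eq:map_productform_entropyrate}, namely $H(P^{(S)}) = H(\pi^{(S)} \boxtimes P^{(S)}) - H(\pi^{(S)})$, immediately yields $f(\mathbf{S}) = \sum_{i=1}^k H(P^{(S_i)}) = g(\mathbf{S}) - c(\mathbf{S})$. For the structure of $g$, the key input is that each $G_i : S \mapsto H(\pi^{(S)} \boxtimes P^{(S)})$ is monotonically non-decreasing and submodular; this property was already invoked in Section \ref{sec:entropy_rate} and holds because $\pi^{(S)} \boxtimes P^{(S)}$ is the marginal of the fixed joint law $\pi \boxtimes P$ on $\mathcal{X} \times \mathcal{X}$ onto the coordinate block indexed by $S$ at both time steps, and the entropy of the marginals of a fixed distribution is a monotone submodular set function. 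Granting this, Corollary \ref{lem:sum_nondecr_submod} applied to $G_1, \ldots, G_k$ shows that $g(\mathbf{S}) = \sum_{i=1}^k G_i(S_i)$ is monotonically non-decreasing and $k$-submodular, and $g(\emptyset) \ge 0$ follows at once from non-negativity of entropy.

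With every hypothesis checked, I would simply invoke Theorem \ref{thm:lb_gen_distgrdy}. I do not expect a genuine obstacle: the corollary amounts to recognizing that the problem fits the template of the generalized distorted greedy algorithm, and the only mildly non-routine ingredient, the monotone submodularity of $S \mapsto H(\pi^{(S)} \boxtimes P^{(S)})$, is a standard consequence of the submodularity of entropy that has already been used earlier in the paper.
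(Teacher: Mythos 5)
Your proposal is correct and follows essentially the same route as the paper: decompose $f(\mathbf{S})=\sum_i H(\pi^{(S_i)}\boxtimes P^{(S_i)})-\sum_i H(\pi^{(S_i)})$, use the product form of $\pi$ to make $c$ non-negative modular, apply Corollary~\ref{lem:sum_nondecr_submod} to the monotone submodular summands $S\mapsto H(\pi^{(S)}\boxtimes P^{(S)})$ to get that $g$ is monotonically non-decreasing and $k$-submodular, and then invoke Theorem~\ref{thm:lb_gen_distgrdy}. The extra detail you supply (why $\pi^{(S)}\boxtimes P^{(S)}$ is a marginal of $\pi\boxtimes P$, hence monotone submodular entropy) is a correct justification of a step the paper takes for granted.
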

In the special case where $k=1$ and $\mathbf{V} = \llbracket d \rrbracket$, we recover Corollary~\ref{cor:productform_entropyrate}.

Next, we \textcolor{black}{look into the more general case} where $P$ is $\pi$-stationary for general $\pi$ which may not be of product form. We first prove an orthant submodularity result.

\begin{lemma}
    The map \eqref{map:gen_entropy_rate} is orthant submodular.
\end{lemma}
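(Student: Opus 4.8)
The plan is to reduce orthant submodularity of $f$ directly to the submodularity of the single-coordinate map $S \mapsto H(P^{(S)})$ already recorded in Theorem~\ref{thm:submod_mc}. The crucial structural observation is that $f$ decomposes as a sum over the $k$ coordinates, $f(\mathbf{S}) = \sum_{j=1}^k H(P^{(S_j)})$, with each summand depending on only one coordinate. Consequently, adding an element $e$ to the $i$-th set leaves every summand but the $i$-th untouched, so the marginal gain collapses to
\[
\Delta_{e,i} f(\mathbf{S}) = H(P^{(S_i \cup \{e\})}) - H(P^{(S_i)}).
\]

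To verify the defining inequality~\eqref{eq:ort_submod}, fix $i \in \llbracket k \rrbracket$ and take $\mathbf{S} \preceq \mathbf{T}$ with $e \notin \mathrm{supp}(\mathbf{T})$; in particular $S_i \subseteq T_i$ and $e \notin T_i$. By Theorem~\ref{thm:submod_mc} the set function $A \mapsto H(P^{(A)})$ is submodular on $2^{\llbracket d \rrbracket}$, which is equivalent to the diminishing-returns characterization: $H(P^{(A \cup \{e\})}) - H(P^{(A)}) \geq H(P^{(B \cup \{e\})}) - H(P^{(B)})$ whenever $A \subseteq B$ and $e \notin B$. Applying this with $A = S_i$ and $B = T_i$ yields exactly $\Delta_{e,i} f(\mathbf{S}) \geq \Delta_{e,i} f(\mathbf{T})$, so $f$ is orthant submodular. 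Note that this argument uses only $\pi$-stationarity of $P$ (so that $P^{(S_i)}$ is well defined) and makes no product-form assumption on $\pi$, which is the point of this case.

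There is no substantive obstacle here: the only care needed is to recall the equivalence between submodularity and diminishing returns for set functions, and to observe that because $\Delta_{e,i}f$ touches only the $i$-th coordinate, the hypothesis $\mathbf{S} \preceq \mathbf{T}$ (together with $e \notin \mathrm{supp}(\mathbf{T})$) supplies precisely the inclusion $S_i \subseteq T_i$ and the membership condition $e \notin T_i$ required to invoke Theorem~\ref{thm:submod_mc}. I would expect the subsequent development to then combine this lemma with a pairwise-monotonicity statement (via Theorem~\ref{thm:monotonize-k} or a direct argument) to obtain a $k$-submodular surrogate amenable to Algorithm~\ref{alg:generalized_distorted_greedy}.
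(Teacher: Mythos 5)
Your proof is correct and follows essentially the same route as the paper: because $f(\mathbf{S}) = \sum_{j=1}^k H(P^{(S_j)})$ only the $i$-th summand changes, so $\Delta_{e,i}f(\mathbf{S}) = H(P^{(S_i \cup \{e\})}) - H(P^{(S_i)})$, and the diminishing-returns form of the submodularity of $S \mapsto H(P^{(S)})$ (Theorem~\ref{thm:submod_mc}) applied with $S_i \subseteq T_i$, $e \notin T_i$ gives the orthant inequality. Your write-up merely makes explicit the reduction step that the paper leaves implicit.
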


\begin{proof}
    We shall prove that $\Delta_{e, i} f(\mathbf S) \geq \Delta_{e, i} f(\mathbf T)$, where we choose $\mathbf{S} \preceq \mathbf{T}$ and $e \notin \mathrm{supp}(\mathbf{T})$.
    Given the submodularity of $S \mapsto H(P^{(S)})$, we have 
    $$H(P^{(S_i \cup \{e\})}) - H(P^{(S_i)}) \geq H(P^{(T_i \cup \{e\})}) - H(P^{(T_i)}), $$ which is equivalent to $\Delta_{e, i} f(\mathbf S) \geq \Delta_{e, i} f(\mathbf T)$.
\end{proof}

In view of Theorem~\ref{thm:monotonize-k}, since the map \eqref{map:gen_entropy_rate} is orthant submodular, then for any $\beta \in \mathbb R$, if $\mathbf S \preceq \mathbf V$, we have a monotonically non-decreasing $k$-submodular function $g$ given by
\begin{align}\label{eq:g_k_submod_entropyrate}
    g(\mathbf{S}) = \sum_{i=1}^k H(P^{(S_i)}) - \beta + \sum_{i=1}^k \sum_{e\in S_i} (H(P^{(V_i \backslash \{e\})}) - H(P^{(V_i)})),
\end{align}
and we also denote the following modular function
\begin{align}\label{eq:c_k-submod_entropyrate}
    c(\mathbf{S}) &= - \beta + \sum_{i=1}^k \sum_{e\in S_i} (H(P^{(V_i \backslash \{e\})}) - H(P^{(V_i)}))\\
    &= - \beta + \sum_{i=1}^k \sum_{e\in S_i} (D(P^{(V_i)} \| P^{(e)} \otimes P^{(V_i \backslash \{e\})}) - H(P^{(e)})).\nonumber
\end{align}
As $H(P^{(e)}) \leq \log |\mathcal{X}^{(e)}|$, $c$ is ensured to be non-negative if $\beta \leq - \sum_{i=1}^k \sum_{e \in V_i} \log |\mathcal{X}^{(e)}|$. Since 
\begin{align*}
    f(\mathbf{S}) = \sum_{i=1}^k H(P^{(S_i)}) = g(\mathbf{S}) - c(\mathbf{S}),
\end{align*} 
then we can apply Algorithm~\ref{alg:generalized_distorted_greedy} to perform distorted greedy maximization with a guaranteed lower bound.
\begin{corollary}\label{cor:gen_entropyrate}
    Let $P\in \mathcal{L}(\mathcal{X})$ be $\pi$-stationary and $\mathbf{V} \in (k+1)^{\llbracket d \rrbracket}$. In Algorithm~\ref{alg:generalized_distorted_greedy}, we take $g$ as in \eqref{eq:g_k_submod_entropyrate} and $c$ as in \eqref{eq:c_k-submod_entropyrate}, $\beta \leq - \sum_{i=1}^k \sum_{e \in V_i} \log |\mathcal{X}^{(e)}|$, and $\mathbf{OPT} = \argmax_{\mathbf{S} \preceq \mathbf{V};~ |\mathrm{supp}(\mathbf{S})| \leq m} f(\mathbf{S})$. Therefore, Theorem~\ref{thm:lb_gen_distgrdy} gives \begin{align*}
        f(\mathbf{S}_m) = H(\otimes_{i=1}^k P^{(S_{m,i})}) \geq (1 - e^{-1})g(\mathbf{OPT}) - c(\mathbf{OPT}),
    \end{align*}
    where $\mathbf{S}_m = (S_{m,1},S_{m,2},\ldots,S_{m,k})$ is the output of Algorithm~\ref{alg:generalized_distorted_greedy}.
\end{corollary}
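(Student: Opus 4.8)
The plan is to realize Corollary~\ref{cor:gen_entropyrate} as a direct instance of Theorem~\ref{thm:lb_gen_distgrdy}. Thus the whole task reduces to verifying, for the specified $g$ and $c$, the three input requirements of that theorem: (i) $g$ is monotonically non-decreasing and $k$-submodular with $g(\emptyset)\geq 0$; (ii) $c$ is non-negative and modular with $c(\emptyset)=0$; and (iii) $f(\mathbf S)=g(\mathbf S)-c(\mathbf S)$ on $\{\mathbf S\preceq\mathbf V\}$. Granting these, Theorem~\ref{thm:lb_gen_distgrdy} gives $f(\mathbf S_m)=g(\mathbf S_m)-c(\mathbf S_m)\geq (1-e^{-1})g(\mathbf{OPT})-c(\mathbf{OPT})$, and the left-hand side equals $\sum_{i=1}^k H(P^{(S_{m,i})})=H(\otimes_{i=1}^k P^{(S_{m,i})})$ by the additivity of entropy rate under tensor products recorded in~\eqref{map:gen_entropy_rate}.

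For (i): the $g$ of~\eqref{eq:g_k_submod_entropyrate} is exactly the function produced by Theorem~\ref{thm:monotonize-k} applied to $f=$ map~\eqref{map:gen_entropy_rate} with the same $\beta$ and $\mathbf V$, because $f(V_1,\ldots,V_i\backslash\{e\},\ldots,V_k)-f(V_1,\ldots,V_i,\ldots,V_k)=H(P^{(V_i\backslash\{e\})})-H(P^{(V_i)})$ (only the $i$-th summand changes). The hypothesis of Theorem~\ref{thm:monotonize-k} holds by the orthant-submodularity lemma established just above, so $g$ is monotonically non-decreasing and $k$-submodular. Evaluating at $\mathbf S=\emptyset$, the inner sums vanish and $H(P^{(\emptyset)})=0$, leaving $g(\emptyset)=-\beta$; since each $\log|\mathcal{X}^{(e)}|\geq 0$ the prescribed choice $\beta\leq-\sum_{i=1}^k\sum_{e\in V_i}\log|\mathcal{X}^{(e)}|$ forces $\beta\leq 0$, hence $g(\emptyset)\geq 0$.

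For (ii): $c$ in~\eqref{eq:c_k-submod_entropyrate} is modular since adding $e$ to the $i$-th set always changes $c$ by the fixed amount $H(P^{(V_i\backslash\{e\})})-H(P^{(V_i)})$, independent of the other coordinates, with the constant $-\beta$ assigned so that $c(\emptyset)=0$ (equivalently, since the algorithm and the proof of Theorem~\ref{thm:lb_gen_distgrdy} only use the singleton increments of $c$, one may work directly with these element weights). For non-negativity, use the second expression in~\eqref{eq:c_k-submod_entropyrate}: each summand is $D(P^{(V_i)}\|P^{(e)}\otimes P^{(V_i\backslash\{e\})})-H(P^{(e)})\geq -H(P^{(e)})\geq -\log|\mathcal{X}^{(e)}|$ by non-negativity of the KL divergence and the maximal-entropy bound $H(\cdot)\leq\log(\text{alphabet size})$; summing over $e\in S_i\subseteq V_i$ and all $i$ and invoking the bound on $\beta$ yields $c(\mathbf S)\geq 0$. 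Requirement (iii) is then immediate: subtracting~\eqref{eq:c_k-submod_entropyrate} from~\eqref{eq:g_k_submod_entropyrate} cancels the $-\beta$ and the double sums, leaving $g(\mathbf S)-c(\mathbf S)=\sum_{i=1}^k H(P^{(S_i)})=f(\mathbf S)$, and the corollary follows by quoting Theorem~\ref{thm:lb_gen_distgrdy}.

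The only delicate point I anticipate is confirming $c\geq 0$: since $S\mapsto H(P^{(S)})$ is submodular but not monotone, the element weights $H(P^{(V_i\backslash\{e\})})-H(P^{(V_i)})$ need not have a fixed sign, and it is precisely the shift by $-\beta$ with $\beta\leq-\sum_{i}\sum_{e\in V_i}\log|\mathcal{X}^{(e)}|$, combined with the crude bound $H(P^{(e)})\leq\log|\mathcal{X}^{(e)}|$, that restores it; keeping $c(\emptyset)=0$ simultaneously (so the hypotheses of Theorem~\ref{thm:lb_gen_distgrdy} are met verbatim) is the bit of bookkeeping that needs care.
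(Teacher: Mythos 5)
Your proposal is correct and follows essentially the same route as the paper: prove orthant submodularity of the map $\mathbf{S}\mapsto\sum_{i=1}^k H(P^{(S_i)})$, invoke Theorem~\ref{thm:monotonize-k} to get a monotonically non-decreasing $k$-submodular $g$, secure non-negativity of $c$ via $H(P^{(e)})\leq\log|\mathcal{X}^{(e)}|$ together with the stated bound on $\beta$, write $f=g-c$, and quote Theorem~\ref{thm:lb_gen_distgrdy}. Your additional bookkeeping on $g(\emptyset)\geq 0$ and on the normalization of $c$ (note that the paper's definition gives $c(\emptyset)=-\beta$, not $0$, a point the paper passes over; your observation that the algorithm and the guarantee only use singleton increments of $c$ is the right way to reconcile this) is a minor refinement rather than a different approach.
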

Note that the lower bound of Corollary \ref{cor:gen_entropyrate} depends on $\beta$ through $g$ and $c$. If $\beta$ is chosen to be too small, then the lower bound might be too loose as the right hand side might be negative.

\section{Submodular optimization of distance to factorizability $D(P \| P^{(S)} \otimes P^{(-S)})$}\label{sec:dist2fact}
\subsection{Submodular minimization of the distance to factorizability}
For $$2^{\llbracket d \rrbracket} \ni S \mapsto D(P\| P^{(S)} \otimes P^{(-S)}),$$
we first recall that this map is submodular (see Theorem \ref{thm:submod_mc}). Since $D(P\| P^{(S)} \otimes P^{(-S)}) = D(P\| P^{(-S)} \otimes P^{(S)})$, then this map is also symmetric. In this case, there exists an algorithm for minimizing non-negative symmetric submodular functions (see Theorem 14.25 of \cite{korte2011combinatorial}) that gives
\begin{align*}
    S^* \in \argmin_{\emptyset \neq S \subset \llbracket d \rrbracket;~ |S| \leq m} D(P\| P^{(S)} \otimes P^{(-S)})
\end{align*}
with time complexity $\mathcal{O}(d^3 \theta)$. Here, $\theta$ denotes the worst case time needed to evaluate $D(P\| P^{(S)} \otimes P^{(-S)})$ for any given subset $S$.

\subsection{Submodular maximization of the distance to factorizability}
Given $P \in \mathcal{L}(\mathcal{X})$ and $m \in \mathbb{N}$, \textcolor{black}{in this subsection we investigate the following submodular (recall Theorem \ref{thm:submod_mc}) maximization problem subjected to a cardinality constraint} 
\begin{align}\label{eq:max_dist2fact}
    \max_{S \subseteq \llbracket d \rrbracket;~ |S| \leq m} D(P \| P^{(S)} \otimes P^{(-S)}).
\end{align}

Since $D(P \| P^{(S)} \otimes P^{(-S)}) \geq 0$ and $D(P \| P^{(\emptyset)} \otimes P^{(\llbracket d\rrbracket)}) = 0$, if we consider the unconstrained version of \eqref{eq:max_dist2fact}, we can apply Algorithm~\ref{alg:local_search} with $\left(\frac{1}{2} - \frac{\epsilon}{d}\right)$-approximation guarantee (see Theorem~\ref{thm:lb_local_search}) since $D(P \| P^{(S)} \otimes P^{(-S)})$ is symmetric. 

In view of Theorem \ref{thm:monotonize}, \textcolor{black}{since $D(\cdot \| \cdot) \geq 0$,} we choose $\beta = 0$ and take
\begin{align}\label{eq:g_dist2fact}
    g(S) = D(P \| P^{(S)} \otimes P^{(-S)}) + \sum_{e\in S} D(P \| P^{(-e)} \otimes P^{(e)}),
\end{align} 
which is submodular and monotonically non-decreasing. In this case, we also take the modular and non-negative function $c$ to be\begin{align}\label{eq:c_dist2fact}
    c(S) = \sum_{e\in S} D(P \| P^{(-e)} \otimes P^{(e)}).
\end{align}
Therefore, \begin{align*}
    D(P\| P^{(S)} \otimes P^{(-S)}) = g(S) - c(S)
\end{align*}
\textcolor{black}{can be expressed as the difference of a non-negative, submodular, monotonically non-decreasing function $g$ and a non-negative modular function $c$. As a result, Algorithm~\ref{alg:distorted_greedy} can be applied to maximize $D(P \| P^{(S)} \otimes P^{(-S)})$ that yields a theoretical lower bound in the following Corollary.}
\begin{corollary}\label{cor:dist2fact}
    Let $P\in \mathcal{L}(\mathcal{X})$ be $\pi$-stationary. In Algorithm~\ref{alg:distorted_greedy}, we take $g$ as in \eqref{eq:g_dist2fact} and $c$ as in \eqref{eq:c_dist2fact}, and $\mathrm{OPT} = \argmax_{S \subseteq \llbracket d\rrbracket;~ |S| \leq m} D(P \| P^{(S)} \otimes P^{(-S)})$. By Theorem~\ref{thm:lb_distgrdy}, we have \begin{align*}
        D(P \| P^{(S_m)} \otimes P^{(-S_m)}) \geq (1 - e^{-1}) g(\mathrm{OPT}) - c(\mathrm{OPT}),
    \end{align*}
    where $S_m$ is the final output set of Algorithm~\ref{alg:distorted_greedy}.
\end{corollary}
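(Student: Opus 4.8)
The plan is to verify that the pair $(g,c)$ from \eqref{eq:g_dist2fact} and \eqref{eq:c_dist2fact} satisfies the hypotheses required by Algorithm~\ref{alg:distorted_greedy}, and then invoke Theorem~\ref{thm:lb_distgrdy} directly. First I would set $f(S) := D(P \| P^{(S)} \otimes P^{(-S)})$ on the ground set $U = \llbracket d \rrbracket$, which is submodular by Theorem~\ref{thm:submod_mc}, and apply Theorem~\ref{thm:monotonize} with $\beta = 0$. Using the convention $P^{(\emptyset)} = P^{(-\llbracket d \rrbracket)} = 1$, the boundary evaluations become $f(U) = D(P \| P^{(\llbracket d \rrbracket)} \otimes P^{(\emptyset)}) = D(P \| P) = 0$ and $f(U \backslash \{e\}) = D(P \| P^{(-e)} \otimes P^{(e)})$, so the function produced by Theorem~\ref{thm:monotonize} is exactly the $g$ in \eqref{eq:g_dist2fact}; it is therefore submodular and monotonically non-decreasing, and moreover $g(\emptyset) = f(\emptyset) = D(P \| P) = 0 \geq 0$.

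Next I would observe that $c$ in \eqref{eq:c_dist2fact} is a non-negative modular function with $c(\emptyset) = 0$: it is a sum of element-wise weights $w_e := D(P \| P^{(-e)} \otimes P^{(e)})$, each non-negative since the KL divergence is non-negative, which makes $c$ modular, non-negative, and vanishing at $\emptyset$. Subtracting \eqref{eq:c_dist2fact} from \eqref{eq:g_dist2fact} then yields the identity $D(P \| P^{(S)} \otimes P^{(-S)}) = g(S) - c(S)$ by direct cancellation of the sum over $e \in S$. Finally, applying Theorem~\ref{thm:lb_distgrdy} to the output $S_m$ of Algorithm~\ref{alg:distorted_greedy} run with this $g$ and $c$ gives $g(S_m) - c(S_m) \geq (1 - e^{-1}) g(\mathrm{OPT}) - c(\mathrm{OPT})$, and rewriting the left-hand side as $D(P \| P^{(S_m)} \otimes P^{(-S_m)})$ produces the claimed bound.

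There is no real obstacle here; the only mild subtlety is confirming that the boundary evaluations $f(U)$ and $f(U \backslash \{e\})$ are computed correctly so that the output of Theorem~\ref{thm:monotonize} coincides verbatim with the stated $g$, together with the non-negativity of $c$ and of $g(\emptyset)$. Everything else is a direct substitution into the already-established machinery.
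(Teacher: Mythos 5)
Your proposal is correct and follows essentially the same route as the paper: apply Theorem~\ref{thm:monotonize} with $\beta = 0$ to $S \mapsto D(P \| P^{(S)} \otimes P^{(-S)})$ (using $f(U)=0$ and $f(U\backslash\{e\}) = D(P\|P^{(-e)}\otimes P^{(e)})$ to recover \eqref{eq:g_dist2fact}), note that $c$ in \eqref{eq:c_dist2fact} is non-negative modular, and invoke Theorem~\ref{thm:lb_distgrdy}. Your explicit checks of the boundary evaluations and of $g(\emptyset)=0$, $c(\emptyset)=0$ are details the paper leaves implicit but are consistent with its argument.
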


\subsection{$k$-submodular maximization of distance to factorizability of the tensorized keep-$S_i$-in matrices $D(P \| P^{(S_1)} \otimes \ldots \otimes P^{(S_k)} \otimes P^{(-\cup_{i=1}^k S_i)})$}

In this section, we \textcolor{black}{study} the following map 
\begin{align}\label{eq:map_generalized_dist2fact}
    (k+1)^{\llbracket d \rrbracket} \ni \mathbf{S} \mapsto f(\mathbf{S}) = D(P \| P^{(S_1)} \otimes \ldots \otimes P^{(S_k)} \otimes P^{(-\cup_{i=1}^k S_i)}),
\end{align}
We consider the maximization problem of the form, for given $\mathbf{V} \in (k+1)^{\llbracket d \rrbracket}$, \begin{align}
    \max_{\mathbf{S} \preceq \mathbf{V};~ |\mathrm{supp}(\mathbf{S})| \leq m} D(P \| P^{(S_1)} \otimes \ldots \otimes P^{(S_k)} \otimes P^{(-\cup_{i=1}^k S_i)}).
\end{align}
\textcolor{black}{In other words, we aim to select a coordinate partition $\mathbf{S}$ to determine a factorized chain being the ``furthest'' from the original chain $P$ in terms of KL divergence.}
In the special case of $k=1$ and $\mathbf{V} = \llbracket d \rrbracket$, we recover problem~\eqref{eq:max_dist2fact}.

\begin{lemma}
    The map \eqref{eq:map_generalized_dist2fact} is orthant submodular.
\end{lemma}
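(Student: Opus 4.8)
The plan is to show that the marginal-gain inequality \eqref{eq:ort_submod} holds for the map in \eqref{eq:map_generalized_dist2fact}. Fix $i \in \llbracket k \rrbracket$, take $\mathbf S \preceq \mathbf T$ in $(k+1)^{\llbracket d \rrbracket}$ and $e \notin \mathrm{supp}(\mathbf T)$; I must prove $\Delta_{e,i} f(\mathbf S) \ge \Delta_{e,i} f(\mathbf T)$. The natural first step is to expand $f(\mathbf S) = D(P \| P^{(S_1)} \otimes \cdots \otimes P^{(S_k)} \otimes P^{(-\cup_j S_j)})$ in terms of entropies, exactly as in the proof of Theorem~\ref{thm:dist2indp_complement} and the analysis of \eqref{map:gen_entropy_rate}. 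Writing $A := \cup_{j=1}^k S_j$ and using the edge-measure/entropy bookkeeping together with the fact that a tensor product factorizes, one gets something of the form
\begin{align*}
    D(P \| P^{(S_1)} \otimes \cdots \otimes P^{(S_k)} \otimes P^{(-A)}) = -H(P) + \sum_{j=1}^k H(P^{(S_j)}) + H(P^{(-A)}),
\end{align*}
valid because the keep-$S_j$-in blocks and the leave-$A$-out block partition all $d$ coordinates into disjoint groups and $P$ projected onto a product chain has entropy rate equal to the sum of the block entropy rates. (One should double-check the exact constant/sign against Theorem~\ref{thm:submod_mc}, which in the $k=1$ case asserts submodularity of $S \mapsto D(P\|P^{(S)}\otimes P^{(-S)})$.)

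Given this decomposition, the marginal gain $\Delta_{e,i} f(\mathbf S)$ only touches the $i$-th keep-in term and the leave-out term, since adding $e$ to $S_i$ replaces $H(P^{(S_i)})$ by $H(P^{(S_i \cup \{e\})})$ and $H(P^{(-A)})$ by $H(P^{(-(A \cup \{e\}))})$ — here I use crucially that $e \notin \mathrm{supp}(\mathbf S) \supseteq$ nothing, i.e. $e \notin A$, which follows from $e \notin \mathrm{supp}(\mathbf T) \supseteq \mathrm{supp}(\mathbf S)$. So
\begin{align*}
    \Delta_{e,i} f(\mathbf S) = \big(H(P^{(S_i \cup \{e\})}) - H(P^{(S_i)})\big) + \big(H(P^{(-(A \cup \{e\}))}) - H(P^{(-A)})\big),
\end{align*}
and similarly for $\mathbf T$ with $B := \cup_{j=1}^k T_j \supseteq A$. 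Now I would compare the two pieces separately. For the keep-in piece, $S_i \subseteq T_i$ and submodularity of $S \mapsto H(P^{(S)})$ (Theorem~\ref{thm:submod_mc}) gives $H(P^{(S_i \cup \{e\})}) - H(P^{(S_i)}) \ge H(P^{(T_i \cup \{e\})}) - H(P^{(T_i)})$. For the leave-out piece, note $S \mapsto H(P^{(-S)})$ is submodular by Lemma~\ref{lem:complement_submodularity}, so with $A \subseteq B$ and $e \notin B$ we get $H(P^{(-(A\cup\{e\}))}) - H(P^{(-A)}) \ge H(P^{(-(B\cup\{e\}))}) - H(P^{(-B)})$ — wait, submodularity of $g(S)=H(P^{(-S)})$ means $g(A\cup\{e\}) - g(A) \ge g(B\cup\{e\}) - g(B)$ for $A\subseteq B$, which is precisely this inequality. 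Adding the two inequalities yields $\Delta_{e,i} f(\mathbf S) \ge \Delta_{e,i} f(\mathbf T)$, as desired.

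The main obstacle I anticipate is getting the entropy decomposition of $f$ exactly right: one must verify that $D(P \| \bigotimes_{j} P^{(S_j)} \otimes P^{(-A)})$ really does collapse to the sum-of-entropies expression above, which relies on the projections $\{P^{(S_1)},\dots,P^{(S_k)}, P^{(-A)}\}$ acting on a genuine product partition of the $d$ coordinates and on the standard identity $H(\otimes_l M_l) = \sum_l H(M_l)$ for product transition matrices together with the marginal consistency $P^{(S_j)} = (P^{(A)})^{(S_j)}$ under $\pi$. Once that identity is in hand, the orthant-submodularity argument is just two applications of already-established submodularity facts (Theorem~\ref{thm:submod_mc} and Lemma~\ref{lem:complement_submodularity}) plus the observation that the marginal gain decouples into a keep-in part and a leave-out part because $e$ lies outside the support of the larger tuple. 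A minor point to handle carefully is the degenerate case where $e$ is the only remaining coordinate so that $A \cup \{e\} = \llbracket d \rrbracket$ and $P^{(-(A\cup\{e\}))} = 1$ by convention, but this is consistent with $H(1) = 0$ and causes no issue.
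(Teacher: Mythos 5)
Your proposal is correct and follows essentially the same route as the paper: expand $f(\mathbf{S}) = -H(P) + \sum_{j} H(P^{(S_j)}) + H(P^{(-\mathrm{supp}(\mathbf{S}))})$, split the marginal gain into a keep-$S_i$-in piece and a leave-$\mathrm{supp}(\mathbf{S})$-out piece, and bound each by the submodularity of $S \mapsto H(P^{(S)})$ (the paper handles the leave-out piece by the same submodularity applied to complements, which is exactly your invocation of Lemma~\ref{lem:complement_submodularity}). The entropy decomposition you flagged as needing verification does hold, by marginalizing $\pi(x)P(x,y)\ln P^{(S_j)}(x^{(S_j)},y^{(S_j)})$ over the remaining coordinates using the definition of the keep-$S_j$-in matrix, and it is the same identity the paper uses implicitly here and explicitly in \eqref{eq:g_gen_dist2fact}.
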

\begin{proof}
    We shall prove that $\Delta_{e, i} f(\mathbf S) \geq \Delta_{e, i} f(\mathbf T)$, where we choose $\mathbf{S} \preceq \mathbf{T}$ and $e \notin \mathrm{supp}(\mathbf{T})$. We compute that
    \begin{align*}
        \Delta_{e, i} f(\mathbf S) - \Delta_{e, i} f(\mathbf T) &= H(P^{(S_i \cup \{e\})}) - H(P^{(S_i)}) + H(P^{(-\mathrm{supp}(\mathbf{S}) \cup \{e\})}) - H(P^{(-\mathrm{supp}(\mathbf{S}))})\\
        &\quad - H(P^{(T_i \cup \{e\})}) + H(P^{(T_i)}) - H(P^{(-\mathrm{supp}(\mathbf{T}) \cup \{e\})}) + H(P^{(-\mathrm{supp}(\mathbf{T}))}) \\
        &= \Big[\big(H(P^{(S_i \cup \{e\})}) - H(P^{(S_i)})\big) - \big(H(P^{(T_i \cup \{e\})}) - H(P^{(T_i)})\big)\Big] \\
        &\quad + \Big[\big(H(P^{(-\mathrm{supp}(\mathbf{T}))}) - H(P^{(-\mathrm{supp}(\mathbf{T}) \cup \{e\})})\big) \\
        &\quad - \big(H(P^{(-\mathrm{supp}(\mathbf{S}))}) - H(P^{(-\mathrm{supp}(\mathbf{S}) \cup \{e\})})\big)\Big],
    \end{align*}
    where each of the two terms above are non-negative given the submodularity of $S \mapsto H(P^{(S)})$ (recall Theorem \ref{thm:submod_mc}).
\end{proof}

\textcolor{black}{However, since the map \eqref{eq:map_generalized_dist2fact} is in general not pairwise monotone, then by Theorem \ref{thm:k-submod}, the map \eqref{eq:map_generalized_dist2fact} is in general not $k$-submodular.}
In view of Theorem~\ref{thm:monotonize-k}, since the map \eqref{eq:map_generalized_dist2fact} is orthant submodular, for any $\beta \in \mathbb R$, if $\mathbf{S} \preceq \mathbf{V}$, we have a monotonically non-decreasing $k$-submodular function given by 
\begin{align}\label{eq:g_gen_dist2fact}
    g(\mathbf{S}) &= f(\mathbf{S}) - \beta + \sum_{i=1}^k \sum_{e \in S_i} \Big[D(P\| P^{(V_1)} \otimes \ldots \otimes P^{(V_i \backslash \{e\})} \otimes \ldots \otimes P^{(V_k)} \otimes P^{(-\mathrm{supp}(\mathbf{V}) \backslash \{e\})}) \\
    &\quad - D(P\| P^{(V_1)} \otimes \ldots \otimes P^{(V_i)} \otimes \ldots \otimes P^{(V_k)} \otimes P^{(-\mathrm{supp}(\mathbf{V}))})\Big] \nonumber\\
    &= f(\mathbf{S}) - \beta + \sum_{i=1}^k \sum_{e \in S_i}\Big[H(P^{(V_i \backslash \{e\})}) + H(P^{(-\mathrm{supp}(\mathbf{V}) \backslash \{e\})}) - H(P^{(V_i)}) - H(P^{(-\mathrm{supp}(\mathbf{V}))})\Big] \nonumber \\
    &= f(\mathbf{S}) - \beta + \sum_{i=1}^k \sum_{e \in S_i} \Big[D(P^{(V_i)} \| P^{(V_i \backslash \{e\})} \otimes P^{(e)}) - D(P^{(-\mathrm{supp}(\mathbf{V}) \backslash \{e\})} \| P^{(-\mathrm{supp}(\mathbf{V}))} \otimes P^{(e)}) \nonumber \Big], \nonumber
\end{align}
and we also obtain the following modular function
\begin{align}\label{eq:c_gen_dist2fact}
    c(\mathbf{S}) = - \beta + \sum_{i=1}^k \sum_{e \in S_i} \Big[D(P^{(V_i)} \| P^{(V_i \backslash \{e\})} \otimes P^{(e)}) - D(P^{(-\mathrm{supp}(\mathbf{V}) \backslash \{e\})} \| P^{(-\mathrm{supp}(\mathbf{V}))} \otimes P^{(e)})\Big].
\end{align}
Thus, if we choose \begin{align*}
    \beta \leq - \sum_{i=1}^k \sum_{e \in V_i} \left(H(P^{(-\mathrm{supp}(\mathbf{V}) \backslash \{e\})}) +  H(P^{(e)})\right),
\end{align*}
then $c$ is non-negative. \textcolor{black}{With these choices, $f$ can then be written as a difference of $g$ and $c$ given by}
\begin{align*}
    f(\mathbf{S}) = D(P \| P^{(S_1)} \otimes \ldots \otimes P^{(S_k)} \otimes P^{(-\cup_{i=1}^k S_i)}) = g(\mathbf{S}) - c(\mathbf{S}).
\end{align*}
We can then apply Algorithm~\ref{alg:generalized_distorted_greedy} to perform distorted greedy maximization with a lower bound.

\begin{corollary}\label{cor:gen_dist2fact}
    Let $P \in \mathcal{L}(\mathcal{X})$ be $\pi$-stationary and $\mathbf{V} \in (k+1)^{\llbracket d \rrbracket}$. In Algorithm~\ref{alg:generalized_distorted_greedy}, we take $g$ as in \eqref{eq:g_gen_dist2fact} and $c$ as in \eqref{eq:c_gen_dist2fact}. We choose \begin{align*}
         \beta \leq - \sum_{i=1}^k \sum_{e \in V_i} \left(H(P^{(-\mathrm{supp}(\mathbf{V}) \backslash \{e\})}) +  H(P^{(e)})\right),
    \end{align*} and let $\mathbf{OPT} = \argmax_{\mathbf{S} \preceq \mathbf{V};~ |\mathrm{supp}(\mathbf{S})| \leq m} f(\mathbf{S})$. Therefore, Theorem~\ref{thm:lb_gen_distgrdy} gives 
    \begin{align*}
        f(\mathbf{S}_m) = D(P \| P^{(S_{m, 1})} \otimes \ldots \otimes P^{(S_{m, k})} \otimes P^{(-\cup_{i=1}^k S_{m, i})}) \geq (1 - e^{-1})g(\mathbf{OPT}) - c(\mathbf{OPT}),
    \end{align*}
    where $\mathbf{S}_m = (S_{m,1}, \ldots, S_{m,k})$ is the output of Algorithm \ref{alg:generalized_distorted_greedy}.
\end{corollary}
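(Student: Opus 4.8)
The plan is to read this statement purely as an instantiation of Theorem~\ref{thm:lb_gen_distgrdy}, so the whole task reduces to checking that the pair $(g,c)$ from \eqref{eq:g_gen_dist2fact}--\eqref{eq:c_gen_dist2fact} meets its hypotheses: $g$ is monotonically non-decreasing, $k$-submodular, with $g(\emptyset)\geq 0$; $c$ is non-negative, modular, with $c(\emptyset)=0$; and $f(\mathbf{S})=g(\mathbf{S})-c(\mathbf{S})$ on the sublattice $\{\mathbf{S}\preceq\mathbf{V}\}$. Once these are in place, applying Theorem~\ref{thm:lb_gen_distgrdy} to the output $\mathbf{S}_m$ of Algorithm~\ref{alg:generalized_distorted_greedy} yields the displayed bound verbatim, in complete parallel with Corollary~\ref{cor:gen_entropyrate}.

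First I would dispatch $g$. The preceding lemma shows the map \eqref{eq:map_generalized_dist2fact}, $\mathbf{S}\mapsto f(\mathbf{S})$, is orthant submodular; feeding this $f$ and the chosen $\beta$ into Theorem~\ref{thm:monotonize-k} gives at once that $g$ in \eqref{eq:g_gen_dist2fact} is $k$-submodular and monotonically non-decreasing on $\{\mathbf{S}\preceq\mathbf{V}\}$. For $g(\emptyset)\geq 0$: at the tuple $\emptyset=(\emptyset,\dots,\emptyset)$ all keep-in factors are trivial and the leave-out factor equals $P$, so $f(\emptyset)=D(P\,\|\,P)=0$, while the double sum in \eqref{eq:g_gen_dist2fact} is empty; hence $g(\emptyset)=-\beta$, which is non-negative since every Shannon entropy rate is non-negative and the displayed threshold for $\beta$ is therefore $\leq 0$.

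Next I would handle $c$. Modularity is immediate from \eqref{eq:c_gen_dist2fact}: it is the constant $-\beta$ plus, for each $e$ placed in coordinate $i$, the fixed weight $w_{i,e}:=D(P^{(V_i)}\,\|\,P^{(V_i\setminus\{e\})}\otimes P^{(e)})-D(P^{(-\mathrm{supp}(\mathbf{V})\setminus\{e\})}\,\|\,P^{(-\mathrm{supp}(\mathbf{V}))}\otimes P^{(e)})$. To get $c\geq 0$, I would invoke the elementary identity $D(P^{(A\sqcup B)}\,\|\,P^{(A)}\otimes P^{(B)})=H(P^{(A)})+H(P^{(B)})-H(P^{(A\sqcup B)})$ (the very step turning the second line of \eqref{eq:g_gen_dist2fact} into the third) together with the two consequences of submodularity and $H(P^{(\emptyset)})=0$ for $S\mapsto H(P^{(S)})$: non-negativity of the KL divergence and subadditivity $H(P^{(A\sqcup B)})\leq H(P^{(A)})+H(P^{(B)})$. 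A short computation then bounds each $w_{i,e}$ below by $-\big(H(P^{(-\mathrm{supp}(\mathbf{V})\setminus\{e\})})+H(P^{(e)})\big)$; summing over $e\in S_i\subseteq V_i$ shows the displayed choice of $\beta$ forces $c(\mathbf{S})\geq 0$ for every $\mathbf{S}\preceq\mathbf{V}$. Finally $f=g-c$ holds by construction of \eqref{eq:g_gen_dist2fact}--\eqref{eq:c_gen_dist2fact}, and Theorem~\ref{thm:lb_gen_distgrdy} delivers the conclusion.

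The main obstacle is bookkeeping rather than depth. One must track carefully that $-\mathrm{supp}(\mathbf{V})\setminus\{e\}$ denotes the complement of $\mathrm{supp}(\mathbf{V})\setminus\{e\}$, i.e. $(\llbracket d\rrbracket\setminus\mathrm{supp}(\mathbf{V}))\cup\{e\}$, so that each set appearing in a KL term is a genuine disjoint union and the entropy identity applies; getting the constant in the $\beta$-threshold right is exactly this identification of complement sets. One also reconciles the $c(\emptyset)=0$ requirement with the constant $-\beta$ in \eqref{eq:c_gen_dist2fact}: the same $-\beta$ sits in both $g$ and $c$, cancels in $g-c=f$, and is invisible to the marginal-gain updates of Algorithm~\ref{alg:generalized_distorted_greedy}, so one may absorb it into $g$ and run the argument with the normalized $c$ satisfying $c(\emptyset)=0$ without changing either the output or the bound. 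Beyond these points the proof is a mechanical specialization of Theorem~\ref{thm:lb_gen_distgrdy}.
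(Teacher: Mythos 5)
Your proposal follows exactly the paper's route: the corollary is established there only implicitly, as the instantiation of Theorem~\ref{thm:lb_gen_distgrdy} once the preceding lemma (orthant submodularity of the map \eqref{eq:map_generalized_dist2fact}) and Theorem~\ref{thm:monotonize-k} give that $g$ in \eqref{eq:g_gen_dist2fact} is monotonically non-decreasing and $k$-submodular, $c$ in \eqref{eq:c_gen_dist2fact} is modular and, by the stated choice of $\beta$, non-negative, and $f=g-c$; your additional checks that $g(\emptyset)=-\beta\geq 0$ and that the constant $-\beta$ can be shifted between $g$ and $c$ so that $c(\emptyset)=0$ without changing the algorithm's choices or the bound are sensible refinements the paper leaves implicit. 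One caveat on the only step where you supply detail beyond the paper: non-negativity of KL plus subadditivity of $S\mapsto H(P^{(S)})$ only yield $w_{i,e}\geq -\bigl(H(P^{(e)})+H(P^{(-\mathrm{supp}(\mathbf{V}))})\bigr)$, whereas your claimed lower bound with $H(P^{(-\mathrm{supp}(\mathbf{V})\setminus\{e\})})$ in place of $H(P^{(-\mathrm{supp}(\mathbf{V}))})$ would additionally require $H(P^{(-\mathrm{supp}(\mathbf{V}))})\leq 2H(P^{(-\mathrm{supp}(\mathbf{V})\setminus\{e\})})$, which does not follow from submodularity alone since the entropy rate is not monotone in $S$; this, however, is precisely the threshold the paper itself asserts without proof, so your argument matches, rather than departs from, the paper's own reasoning.
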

Note that the lower bound of Corollary \ref{cor:gen_dist2fact} depends on $\beta$ through $g$ and $c$. If $\beta$ is chosen to be too small, then the lower bound might be too loose as the right hand side might be negative, \textcolor{black}{and hence the lower bound may be too loose to be informative.}

\section{Supermodular minimization of distance to independence $\mathbb{I} (P^{(S)})$} \label{sec:dist2indp}

Given $P\in \mathcal{L}(\mathcal{X})$ and $d, m \geq 2$, we aim to \textcolor{black}{analyze} the following supermodular (recall Theorem \ref{thm:submod_mc}) minimization problem 
\begin{align}\label{eq:dist2indp}
    \min_{S\subseteq \llbracket d \rrbracket;~ |S| = m} \mathbb{I}(P^{(S)}).
\end{align}
\textcolor{black}{Note that} we shall \textcolor{black}{consider} the constraint $|S|=m$ rather than $|S| \leq m$ as in Section~\ref{sec:entropy_rate} and Section~\ref{sec:dist2fact} because $S\mapsto \mathbb I (P^{(S)})$ is monotonically non-decreasing \textcolor{black}{(see Theorem~\ref{thm:submod_mc})}.

The supermodular minimization problem~\eqref{eq:dist2indp} is equivalent to the following submodular maximization problem
\begin{align}\label{eq:dist2indp_submod}
    \max_{S\subseteq \llbracket d \rrbracket;~ |S|=m} f(S) = -\mathbb{I}(P^{(S)}) = H(P^{(S)}) - \sum_{e \in S} H(P^{(e)}).
\end{align}
Note that we restrict $m$ to be at least $2$, since we have the trivial result that $\mathbb{I}(P^{(e)}) = \mathbb{I}(P^{(\emptyset)}) = 0$ if the constraint is $m = 0$ or $m = 1$. From Theorem~\ref{thm:submod_mc}, $f(S)$ is monotonically non-increasing and submodular. Therefore, the heuristic greedy algorithm (see Section 4 of \cite{MR503866}) cannot provide a theoretical guarantee.

\textcolor{black}{Based on} Theorem~\ref{thm:monotonize}, for any $\beta \in \mathbb R$, we \textcolor{black}{consider} a monotonically non-decreasing submodular function $g$ given by
\begin{align}\label{eq:g_dist2indp}
    g(S) &= f(S) - \beta + \sum_{e\in S} (H(P^{(-e)}) + H(P^{(e)}) - H(P))\\
    &= f(S) - \beta + \sum_{e\in S} D(P \| P^{(e)} \otimes P^{(-e)}). \nonumber
\end{align}
We choose $\beta = 0$ and \textcolor{black}{take} the following non-negative, modular function \textcolor{black}{to} be
\begin{align}\label{eq:c_dist2indp}
    c(S) = \sum_{e\in S} D(P \| P^{(e)} \otimes P^{(-e)})
\end{align}
so that $f(S) = g(S) - c(S)$. By Theorem~\ref{thm:lb_distgrdy}, we can apply Algorithm~\ref{alg:distorted_greedy} to obtain a lower bound.
\begin{corollary}\label{cor:dist2indp}
    Let $P\in \mathcal{L}(\mathcal{X})$ be $\pi$-stationary along with $d,m \geq 2$. In Algorithm~\ref{alg:distorted_greedy}, we take $g$ as in~\eqref{eq:g_dist2indp}, $c$ as in~\eqref{eq:c_dist2indp}, and $\mathrm{OPT} = \max_{S\subseteq \llbracket d \rrbracket;~ |S|=m} f(S)$. By Theorem~\ref{thm:lb_distgrdy}, we have the following lower bound \begin{align*}
        f(S_m) = -\mathbb{I}(P^{(S_m)}) \geq (1 - e^{-1}) g(\mathrm{OPT}) - c(\mathrm{OPT}),
    \end{align*}
    where $S_m$ is the output of Algorithm~\ref{alg:distorted_greedy}.
\end{corollary}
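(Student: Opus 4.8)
The plan is to recognize Corollary~\ref{cor:dist2indp} as a direct instance of the distorted greedy framework of Section~\ref{subsec:distortgreedy}: once we verify that $g$ from \eqref{eq:g_dist2indp} and $c$ from \eqref{eq:c_dist2indp} satisfy the hypotheses of Theorem~\ref{thm:lb_distgrdy} and that $f = g - c$, the stated bound follows immediately. First I would note that $f(S) = -\mathbb{I}(P^{(S)}) = H(P^{(S)}) - \sum_{e\in S} H(P^{(e)})$ is submodular, being the negative of the supermodular map $S \mapsto \mathbb{I}(P^{(S)})$ of Theorem~\ref{thm:submod_mc}. Hence Theorem~\ref{thm:monotonize} applies with ground set $U = \llbracket d \rrbracket$ and $\beta = 0$, and the only genuine computation is to confirm that the correction term $\sum_{e\in S}(f(U\backslash\{e\}) - f(U))$ it produces matches the displayed sum in \eqref{eq:g_dist2indp}. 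Using $P^{(\llbracket d\rrbracket)} = P$, $P^{(\llbracket d\rrbracket\backslash\{e\})} = P^{(-e)}$, and $f(S) = H(P^{(S)}) - \sum_{e\in S} H(P^{(e)})$, a one-line cancellation gives $f(U\backslash\{e\}) - f(U) = H(P^{(-e)}) + H(P^{(e)}) - H(P) = D(P \| P^{(e)} \otimes P^{(-e)})$, the last equality being exactly the identity between the two displayed expressions for $g$ in \eqref{eq:g_dist2indp}. Therefore $g$ is precisely the function produced by Theorem~\ref{thm:monotonize}, so it is monotonically non-decreasing and submodular, and $g(\emptyset) = f(\emptyset) = -\mathbb{I}(P^{(\emptyset)}) = 0 \geq 0$ by the convention $P^{(\emptyset)} = 1$.

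Next I would check the requirements on $c$. The function $c(S) = \sum_{e\in S} D(P \| P^{(e)} \otimes P^{(-e)})$ is a sum of singleton contributions, hence modular, with $c(\emptyset) = 0$; each summand is a KL divergence and therefore non-negative, so $c$ is non-negative. With $\beta = 0$ we have $g(S) - c(S) = f(S) = -\mathbb{I}(P^{(S)})$, the objective of \eqref{eq:dist2indp_submod}, so Algorithm~\ref{alg:distorted_greedy} may be run legitimately with this $g$, $c$ and cardinality parameter $m$.

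Finally I would invoke Theorem~\ref{thm:lb_distgrdy}, which yields $g(S_m) - c(S_m) \geq (1 - e^{-1})g(T) - c(T)$ for the output $S_m$ and any feasible comparison set $T$ with $|T| \leq m$ --- the proof of Theorem~\ref{thm:lb_distgrdy}, like that of its $k$-submodular analogue (Lemma~\ref{lem:gen_dist_grdy_lem2}), only uses $|T| \leq m$ rather than optimality of $T$. Taking $T = \mathrm{OPT} = \argmax_{|S| = m} f(S)$ and using $g(S_m) - c(S_m) = f(S_m) = -\mathbb{I}(P^{(S_m)})$ gives $-\mathbb{I}(P^{(S_m)}) \geq (1 - e^{-1})g(\mathrm{OPT}) - c(\mathrm{OPT})$, as claimed. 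The one step that demands care is precisely this comparison: since $f = g - c$ is monotonically non-increasing, its maximizer over $|S| \leq m$ is the trivial set $\emptyset$, so one must rely on the fact that the distorted greedy guarantee holds against an arbitrary size-$\le m$ set in order to compare meaningfully with the size-exactly-$m$ optimum used throughout Section~\ref{sec:dist2indp}.
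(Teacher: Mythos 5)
Your proposal is correct and follows essentially the same route as the paper: the paper likewise obtains $g$ by applying Theorem~\ref{thm:monotonize} with $\beta = 0$ to the submodular $f(S) = -\mathbb{I}(P^{(S)})$, verifies the identity $H(P^{(-e)}) + H(P^{(e)}) - H(P) = D(P \| P^{(e)} \otimes P^{(-e)})$ so that $f = g - c$ with $c$ non-negative and modular, and then invokes Theorem~\ref{thm:lb_distgrdy}. Your closing observation -- that the distorted greedy guarantee holds against any fixed feasible set of size at most $m$, which is what lets one compare with the size-exactly-$m$ optimizer of the monotonically non-increasing $f$ -- is a correct reading of the proof of Theorem~\ref{thm:lb_distgrdy} (and of Lemma~\ref{lem:gen_dist_grdy_lem2}) and carefully fills in a point the paper passes over silently.
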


\subsection{Supermodular minimization of distance to independence of the complement set $\mathbb{I} (P^{(-S)})$} \label{sec:dist2indp_c}

From Theorem~\ref{thm:dist2indp_complement}, $\mathbb{I} (P^{(-S)})$ is monotonically non-increasing and supermodular. Given $P \in \mathcal{L}(\mathcal{X})$, $d \geq 2$, and $m \leq d - 2$, we shall investigate the following optimization problem 
\begin{align*}
    \max_{S \subseteq \llbracket d \rrbracket;~ |S| \leq m} f(S) = -\mathbb{I} (P^{(-S)}).
\end{align*}
Note that we restrict $m$ to be at most $d-2$, since we have the trivial result that $\mathbb{I}(P^{(e)}) = \mathbb{I}(P^{(\emptyset)}) = 0$ if the constraint is $m = d$ or $m = d-1$.

Since $f(S) = -\mathbb{I} (P^{(-S)})$ is monotonically non-decreasing and submodular, we can \textcolor{black}{then} apply the heuristic greedy algorithm (see Section 4 of \cite{MR503866}) that comes along with a $(1 - e^{-1})$-approximation guarantee.

\subsection{$k$-supermodular minimization of distance to independence of the tensorized keep-$S_i$-in matrices $\mathbb{I}(\otimes_{i=1}^k P^{(S_i)})$}
In this section, we \textcolor{black}{look into} the following map
\begin{align}\label{eq:generalized_dist2indep}
    (k+1)^{\llbracket d \rrbracket} \ni \mathbf{S} = (S_1, \ldots, S_k) \mapsto \mathbb{I}(\otimes_{i=1}^k P^{(S_i)}).
\end{align}

\textcolor{black}{We first state a result that decomposes the distance to independence of the tensorized matrices into a sum of distances to independence of individual matrix.}
\begin{lemma}\label{lem:ind_dist2indp}
    For $k \in \mathbb{N}$ and $\mathbf{S} \in (k+1)^{\llbracket d \rrbracket}$, we have
    $$\mathbb{I}(\otimes_{i=1}^k P^{(S_i)}) = \sum_{i=1}^k \mathbb{I}(P^{(S_i)}).$$
\end{lemma}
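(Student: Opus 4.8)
The plan is to peel off the definition of $\mathbb{I}$ and exploit that the tensor product $\bigotimes_{i=1}^k P^{(S_i)}$ is $\bigotimes_{i=1}^k \pi^{(S_i)}$-stationary, so that both the Kullback--Leibler divergence and the minimisation defining the distance to independence split across the $k$ blocks. Recall that as an element of $(k+1)^{\llbracket d\rrbracket}$ the tuple $\mathbf{S} = (S_1,\dots,S_k)$ has pairwise disjoint components, so the atomic coordinate index set of $\prod_{i=1}^k \mathcal{X}^{(S_i)}$ is the disjoint union $\mathcal{J} := \bigcup_{i=1}^k S_i$, and any product-form competitor $\bigotimes_{j\in\mathcal{J}} L_j$ (with $L_j \in \mathcal{L}(\mathcal{X}^{(j)})$) can be regrouped as $\bigotimes_{i=1}^k\bigl(\bigotimes_{j\in S_i} L_j\bigr)$ with each inner factor lying in $\mathcal{L}(\mathcal{X}^{(S_i)})$. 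Also, since each $P^{(S_i)}$ is $\pi^{(S_i)}$-stationary, the tensor product is $\bigotimes_i \pi^{(S_i)}$-stationary, so $\mathbb{I}(\bigotimes_i P^{(S_i)}) = \mathbb{I}^{\bigotimes_i \pi^{(S_i)}}(\bigotimes_i P^{(S_i)})$.

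First I would record the additivity of the KL divergence over tensor products: for transition matrices $M_i, N_i \in \mathcal{L}(\mathcal{X}^{(S_i)})$ and the product reference measure $\bigotimes_i \pi^{(S_i)}$,
$$D^{\bigotimes_i \pi^{(S_i)}}_{KL}\Bigl(\bigotimes_i M_i \,\Big\|\, \bigotimes_i N_i\Bigr) = \sum_{i=1}^k D^{\pi^{(S_i)}}_{KL}(M_i \| N_i),$$
which follows by expanding the definition of $D^\pi_{KL}$, using that $\ln$ of a tensor product is the sum of the logs of the factors, and collapsing all but one block at a time via $\sum_{y} M_i(x,y) = 1$ and $\sum_{x}\pi^{(S_i)}(x) = 1$. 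This short computation, together with keeping the index bookkeeping straight, is the only real (and mild) obstacle in the proof.

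Then I would chain the identities
$$\mathbb{I}\Bigl(\bigotimes_{i=1}^k P^{(S_i)}\Bigr) = \min_{L_j \in \mathcal{L}(\mathcal{X}^{(j)}),\, j\in\mathcal{J}} D^{\bigotimes_i \pi^{(S_i)}}_{KL}\Bigl(\bigotimes_i P^{(S_i)} \,\Big\|\, \bigotimes_{j\in\mathcal{J}} L_j\Bigr) = \sum_{i=1}^k \min_{L_j \in \mathcal{L}(\mathcal{X}^{(j)}),\, j\in S_i} D^{\pi^{(S_i)}}_{KL}\Bigl(P^{(S_i)} \,\Big\|\, \bigotimes_{j\in S_i} L_j\Bigr) = \sum_{i=1}^k \mathbb{I}(P^{(S_i)}),$$
where the first equality uses tensor-stationarity and the definition of $\mathbb{I}$, the second regroups the product-form competitor and applies the additivity above so the joint minimisation decouples into $k$ independent per-block minimisations, and the third is the definition of $\mathbb{I}$ applied to each $P^{(S_i)}$. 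As a cross-check, one may instead invoke the paper's entropy decomposition $\mathbb{I}(Q) = \sum_{c} H(Q^{(c)}) - H(Q)$ (the identity already used in \eqref{eq:dist2indp_submod}), together with the additivity $H(\bigotimes_i P^{(S_i)}) = \sum_i H(P^{(S_i)})$ exploited for \eqref{map:gen_entropy_rate} and the consistency $\bigl(\bigotimes_l P^{(S_l)}\bigr)^{(j)} = \bigl(P^{(S_i)}\bigr)^{(j)} = P^{(j)}$ for $j \in S_i$, which follows directly from the definition of the keep-in matrix since coordinate $j$ sits inside the $i$-th independent factor.
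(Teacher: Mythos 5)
Your proof is correct, and it reaches the identity by a slightly different route than the paper. The paper proves Lemma \ref{lem:ind_dist2indp} by induction on $k$: it writes $\mathbb{I}(\otimes_i P^{(S_i)})$ directly as $D(\otimes_i P^{(S_i)} \,\|\, \otimes_{j \in \cup_i S_i} P^{(j)})$ (i.e.\ it uses the explicit product-of-marginals optimizer from the definition of $\mathbb{I}$, together with the same marginal consistency $(\otimes_l P^{(S_l)})^{(j)} = P^{(j)}$ that you flag) and then peels off one block at a time with the chain rule of KL divergence for tensor products. You instead stay with the variational definition of $\mathbb{I}$ and observe that, after regrouping a product-form competitor $\otimes_{j} L_j$ into blocks $\otimes_{j \in S_i} L_j$, the tensorization identity $D^{\otimes_i \pi^{(S_i)}}(\otimes_i M_i \| \otimes_i N_i) = \sum_i D^{\pi^{(S_i)}}(M_i \| N_i)$ makes the joint minimization decouple into $k$ independent block minimizations; this is non-inductive and does not require knowing that the minimizer is the product of marginals. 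The key analytic ingredient is the same in both arguments (the additivity of KL divergence over tensor products, which the paper imports as the chain rule/tensorization property rather than verifying by the direct computation you sketch), and both arguments also rely, as you correctly note, on the components $S_1,\ldots,S_k$ of $\mathbf{S} \in (k+1)^{\llbracket d \rrbracket}$ being pairwise disjoint. Your cross-check via $\mathbb{I}(Q) = \sum_c H(Q^{(c)}) - H(Q)$, entropy additivity, and marginal consistency is a valid third derivation consistent with the identities the paper uses elsewhere (e.g.\ in \eqref{eq:dist2indp_submod} and \eqref{map:gen_entropy_rate}).
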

\begin{proof}
    We shall prove by induction on $k$. When $k=1$, the equality trivially holds. When $k=2$, according to the chain rule of KL divergence (see Theorem 2.15 of \cite{Polyanskiy_Wu_2025}), 
    \begin{align*}
        \mathbb{I}(P^{(S_1)} \otimes P^{(S_2)}) &= D(P^{(S_1)} \otimes P^{(S_2)} \| \otimes_{i\in S_1 \cup S_2} P^{(i)}) \\
        &= D(P^{(S_1)} \| \otimes_{i\in S_1} P^{(i)}) + D(P^{(S_2)} \| \otimes_{i\in S_2} P^{(i)})\\
        &= \mathbb{I}(P^{(S_1)}) + \mathbb{I}(P^{(S_2)}).
    \end{align*}
    Suppose $\mathbb{I}(\otimes_{i=1}^m P^{(S_i)}) = \sum_{i=1}^m \mathbb{I}(P^{(S_i)})$ holds ($k=m$), then using the chain rule of KL divergence again (Theorem 2.15 of \cite{Polyanskiy_Wu_2025}), we have
    \begin{align*}
        \mathbb{I}(\otimes_{i=1}^{m+1} P^{(S_i)}) &= D(\otimes_{i=1}^m P^{(S_i)} \otimes P^{(S_{m+1})} \| \otimes_{i\in (\cup_{i=1}^m S_i) \cup S_{m+1}} P^{(i)})\\
        &= D(\otimes_{i=1}^m P^{(S_i)} \| \otimes_{i\in \cup_{i=1}^m S_i} P^{(i)}) + D(P^{(S_{m+1})} \| \otimes_{i\in S_{m+1}} P^{(i)})\\
        &= \sum_{i=1}^{m+1} \mathbb{I}(P^{(S_i)}).
    \end{align*}
\end{proof}

\textcolor{black}{In the second auxiliary result of this subsection, we analyze the monotonicity of \eqref{eq:generalized_dist2indep}.}
\begin{lemma}
    The map \eqref{eq:generalized_dist2indep} is pairwise monotonically non-decreasing. In particular, when $P$ is non-factorizable and $\pi$-stationary, the map \eqref{eq:generalized_dist2indep} is pairwise monotonically strictly increasing for all pairs.
\end{lemma}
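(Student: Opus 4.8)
The plan is to reduce the whole statement to the one‑block distance to independence via the additive identity $\mathbb{I}(\otimes_{i=1}^k P^{(S_i)}) = \sum_{i=1}^k \mathbb{I}(P^{(S_i)})$ just established in Lemma~\ref{lem:ind_dist2indp}. Writing $f(\mathbf S) = \sum_{i=1}^k \mathbb{I}(P^{(S_i)})$, a marginal gain touches only a single block: for $e \notin \mathrm{supp}(\mathbf S)$ and $i \in \llbracket k \rrbracket$,
$$\Delta_{e,i} f(\mathbf S) = \mathbb{I}(P^{(S_i \cup \{e\})}) - \mathbb{I}(P^{(S_i)}).$$
For the pairwise non-decreasing claim I would then simply invoke the monotonicity of $S \mapsto \mathbb{I}(P^{(S)})$ from Theorem~\ref{thm:submod_mc}: each of $\Delta_{e,i} f(\mathbf S)$ and $\Delta_{e,j} f(\mathbf S)$ is nonnegative, hence so is their sum, which is exactly what pairwise monotone non-decreasing requires.

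For the strict statement, the first step is to rewrite each marginal gain as a distance to factorizability. Combining $\mathbb{I}(P^{(T)}) = \sum_{a \in T} H(P^{(a)}) - H(P^{(T)})$ (cf.~\eqref{eq:dist2indp_submod}) with the same entropy-rate identity for the distance to factorizability used elsewhere in the paper gives
$$\Delta_{e,i} f(\mathbf S) = H(P^{(S_i)}) + H(P^{(e)}) - H(P^{(S_i \cup \{e\})}) = D\big(P^{(S_i \cup \{e\})} \,\big\|\, P^{(S_i)} \otimes P^{(e)}\big) \geq 0,$$
and likewise with $j$ in place of $i$. Since a KL divergence of Markov chains vanishes precisely when the two transition matrices agree $\pi^{(S_i \cup \{e\})}$-almost everywhere and $\pi$ has full support, $\Delta_{e,i} f(\mathbf S) = 0$ iff the keep-$(S_i \cup \{e\})$-in matrix is itself $(S_i,\{e\})$-factorizable; hence $\Delta_{e,i} f(\mathbf S) + \Delta_{e,j} f(\mathbf S) = 0$ forces both $P^{(S_i \cup \{e\})}$ and $P^{(S_j \cup \{e\})}$ to factorize in this fashion.

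To exclude this using the non-factorizability of $P$, I would push the divergence down to two coordinates with the partition lemma (Theorem~\ref{thm:part_lem}): for any $a \in S_i$, projecting onto the pair $\{a,e\}$ and using that $(P^{(S_i)} \otimes P^{(e)})^{(\{a,e\})} = P^{(a)} \otimes P^{(e)}$ yields
$$\Delta_{e,i} f(\mathbf S) \;\geq\; D\big(P^{(\{a,e\})} \,\big\|\, P^{(a)} \otimes P^{(e)}\big) \;=\; \mathbb{I}(P^{(\{a,e\})}).$$
Thus, as soon as $S_i$ contains a coordinate $a$ with $P^{(\{a,e\})}$ non-factorizable, the gain $\Delta_{e,i} f(\mathbf S)$ is strictly positive, and \emph{a fortiori} so is the pairwise sum. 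The step I expect to be the main obstacle is exactly this last one: one must argue that the non-factorizability of the $\pi$-stationary $P$ propagates to the relevant low-dimensional restrictions $P^{(\{a,e\})}$, and one must treat with care the genuinely degenerate configurations — e.g.\ $S_i = S_j = \emptyset$ — in which both marginal gains really vanish, so that ``strictly increasing for all pairs'' is to be read as a statement over all index pairs $i \neq j$ together with those $\mathbf S$ whose relevant blocks are nonempty.
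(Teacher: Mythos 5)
Your proof of the non-strict claim is essentially the paper's: after the additivity of Lemma~\ref{lem:ind_dist2indp}, the paper likewise writes each marginal gain as $\Delta_{e,i}f(\mathbf{S}) = H(P^{(e)}) + H(P^{(S_i)}) - H(P^{(S_i\cup\{e\})}) = D(P^{(S_i\cup\{e\})}\,\|\,P^{(S_i)}\otimes P^{(e)})\geq 0$ and concludes pairwise monotonicity from the non-negativity of the two divergences; your alternative of citing the monotonicity of $S\mapsto \mathbb{I}(P^{(S)})$ from Theorem~\ref{thm:submod_mc} is an equivalent shortcut.

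Where you diverge is the strict part, and your caution there is warranted: the paper's own proof of strictness is a one-line assertion (``when $P$ is non-factorizable, it is strictly positive'') with no supporting argument, and the two issues you raise are precisely what it glosses over. First, if $S_i=S_j=\emptyset$ then both divergences reduce to $D(P^{(e)}\|P^{(e)})=0$, so strict positivity fails for such $\mathbf{S}$ regardless of the hypothesis. Second, non-factorizability of the full chain $P$ does not by itself yield $D(P^{(S_i\cup\{e\})}\|P^{(S_i)}\otimes P^{(e)})>0$, since a projection of a non-factorizable chain may factorize. Two remarks on your proposed repair: the reduction to pairs is cleaner via the supermodularity of $S\mapsto\mathbb{I}(P^{(S)})$ (Theorem~\ref{thm:submod_mc}), which for $a\in S_i$ gives $\mathbb{I}(P^{(S_i\cup\{e\})})-\mathbb{I}(P^{(S_i)})\geq \mathbb{I}(P^{(\{a,e\})})$ directly, whereas your partition-lemma route requires identifying the keep-$\{a,e\}$-in matrix of $P^{(S_i)}\otimes P^{(e)}$ with $P^{(a)}\otimes P^{(e)}$, which is not automatic when $\pi^{(S_i\cup\{e\})}$ is not of product form; and even with that bound one still needs non-factorizability of the pairwise projections, which the stated hypothesis does not supply. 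In short, your argument for the first claim matches the paper's; for the strict claim you have not closed the gap, but neither has the paper, and your identification of the degenerate configurations is a genuine caveat its proof leaves unaddressed.
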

\begin{proof}
    Let $f(\mathbf{S}) = \mathbb{I}(\otimes_{i=1}^k P^{(S_i)})$. We shall prove that $\Delta_{e, i} f(\mathbf S) + \Delta_{e, j} f(\mathbf S) \geq 0$, where $i \neq j \in \llbracket d \rrbracket$ and $e \notin \mathrm{supp}(\mathbf{T})$. Since $\mathbb{I}(P^{(S)}) = \sum_{i\in S} H(P^{(i)}) - H(P^{(S)})$, we note that
    \begin{align*}
        \Delta_{e, i} f(\mathbf S) + \Delta_{e, j} f(\mathbf S) &= \mathbb{I}(P^{(S_i \cup \{e\})}) - \mathbb{I}(P^{(S_i)}) + \mathbb{I}(P^{(S_j \cup \{e\})}) - \mathbb{I}(P^{(S_i)}) \\
        &= \big[H(P^{(e)}) + H(P^{(S_i)}) - H(P^{(S_i \cup \{e\})})\big]\\
        &\quad + \big[H(P^{(e)}) + H(P^{(S_j)}) - H(P^{(S_j \cup \{e\})})\big] \\
        &= D(P^{(S_i \cup \{e\})} \| P^{(S_i)} \otimes P^{(e)}) + D(P^{(S_j \cup \{e\})} \| P^{(S_j)} \otimes P^{(e)}),
    \end{align*}
    which is non-negative. In particular, when $P$ is non-factorizable, it is strictly positive.
\end{proof}
\textcolor{black}{The third result of this subsection examines the orthant supermodularity of \eqref{eq:generalized_dist2indep}.}
\begin{lemma}\label{lem:dist2indp_orthant_supermodular}
    The map \eqref{eq:generalized_dist2indep} is orthant supermodular.
\end{lemma}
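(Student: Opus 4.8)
The plan is to reduce the statement to the already-established supermodularity of the single-index map $S \mapsto \mathbb{I}(P^{(S)})$ recorded in Theorem \ref{thm:submod_mc}, exploiting the additive decomposition furnished by Lemma \ref{lem:ind_dist2indp}, which makes the $k$-dimensional problem decouple coordinatewise.

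First I would set $f(\mathbf{S}) := \mathbb{I}(\otimes_{i=1}^k P^{(S_i)})$ and apply Lemma \ref{lem:ind_dist2indp} to write $f(\mathbf{S}) = \sum_{j=1}^k \mathbb{I}(P^{(S_j)})$. Fix $i \in \llbracket k \rrbracket$ and $\mathbf{S} \preceq \mathbf{T}$ with $e \notin \mathrm{supp}(\mathbf{T})$; note that then $S_i \subseteq T_i$ and $e \notin T_i$. Since adjoining $e$ to the $i$-th coordinate only alters the $i$-th summand in the decomposition, the marginal gains collapse to
\begin{align*}
    \Delta_{e,i} f(\mathbf{S}) = \mathbb{I}(P^{(S_i \cup \{e\})}) - \mathbb{I}(P^{(S_i)}), \qquad \Delta_{e,i} f(\mathbf{T}) = \mathbb{I}(P^{(T_i \cup \{e\})}) - \mathbb{I}(P^{(T_i)}).
\end{align*}

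Next I would invoke the supermodularity of $S \mapsto \mathbb{I}(P^{(S)})$ from Theorem \ref{thm:submod_mc}, which is equivalent to the statement that its discrete marginal gain is non-decreasing under set inclusion: for $S_i \subseteq T_i$ and $e \notin T_i$,
\begin{align*}
    \mathbb{I}(P^{(S_i \cup \{e\})}) - \mathbb{I}(P^{(S_i)}) \leq \mathbb{I}(P^{(T_i \cup \{e\})}) - \mathbb{I}(P^{(T_i)}).
\end{align*}
Combining this with the previous display yields $\Delta_{e,i} f(\mathbf{S}) \leq \Delta_{e,i} f(\mathbf{T})$, which is exactly the reversed inequality in \eqref{eq:ort_submod}, i.e. orthant supermodularity.

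The only point requiring minor care is the translation between the ``diminishing/increasing returns'' characterization of (super)modular set functions and the orthant-supermodular definition in \eqref{eq:ort_submod}; once these are aligned there is no substantive obstacle. If one prefers to avoid quoting the supermodularity of $\mathbb{I}(P^{(\cdot)})$ directly, an equivalent route is to expand $\mathbb{I}(P^{(S_i)}) = \sum_{l \in S_i} H(P^{(l)}) - H(P^{(S_i)})$, observe that the modular part $\sum_{l} H(P^{(l)})$ contributes the identical marginal gain $H(P^{(e)})$ for both $\mathbf{S}$ and $\mathbf{T}$, and conclude from the submodularity of $S \mapsto H(P^{(S)})$ (again Theorem \ref{thm:submod_mc}).
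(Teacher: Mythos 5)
Your proposal is correct and takes essentially the same approach as the paper: both collapse the marginal gain to the $i$-th summand via the additive decomposition of Lemma \ref{lem:ind_dist2indp} and then reduce to Theorem \ref{thm:submod_mc}. The paper's proof is precisely your noted alternative route (expanding $\mathbb{I}(P^{(S_i)}) = \sum_{l \in S_i} H(P^{(l)}) - H(P^{(S_i)})$ and using submodularity of $S \mapsto H(P^{(S)})$), while your primary route of quoting the supermodularity of $S \mapsto \mathbb{I}(P^{(S)})$ directly is an equivalent shortcut.
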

\begin{proof}
    Let $f(\mathbf{S}) = \mathbb{I}(\otimes_{i=1}^k P^{(S_i)})$. For any $\mathbf S \preceq \mathbf T$, we shall prove that $\Delta_{e, i} f(\mathbf S) \leq \Delta_{e, i} f(\mathbf T)$, where $i \in \llbracket d \rrbracket$ and $e \in \llbracket d\rrbracket \backslash \mathrm{supp}(\mathbf{T})$.
    \begin{align*}
         \Delta_{e, i} f(\mathbf S) - \Delta_{e, i} f(\mathbf T) &= \big[H(P^{(e)}) + H(P^{(S_i)}) - H(P^{(S_i \cup \{e\})})\big] \\
         &\quad - \big[H(P^{(e)})+ H(P^{(T_i)}) - H(P^{(T_i \cup \{e\})})\big]\\
        &= \big[ H(P^{(T_i \cup \{e\})}) - H(P^{(T_i)})\big] - \big[ H(P^{(S_i \cup \{e\})}) - H(P^{(S_i)})\big] \leq 0,
    \end{align*}
    where the inequality holds owing to the submodularity of $S \mapsto H(P^{(S)})$ in view of Theorem \ref{thm:submod_mc}.
\end{proof}

Collecting the previous two results, we see that, for non-factorizable $P$, the map \eqref{eq:generalized_dist2indep} is not $k$-supermodular as $k$-supermodularity requires both the pairwise monotonically non-increasing property and orthant supermodularity (see Theorem \ref{thm:k-submod}). 

Given $P \in \mathcal{L}(\mathcal{X})$, $d, m \geq k+1$ and $\mathbf{V} \in (k+1)^{\llbracket d \rrbracket}$, since the map~\eqref{eq:generalized_dist2indep} is orthant supermodular, we are interested in the following orthant submodular maximization problem
\begin{align*}
    \max_{\mathbf{S} \preceq \mathbf{V}; \, |\mathrm{supp}(\mathbf{S})| = m} f(\mathbf{S}) = -\mathbb{I}(\otimes_{i=1}^k P^{(S_i)}) = -\sum_{i=1}^k \mathbb{I}(P^{(S_i)}).
\end{align*}
\textcolor{black}{Intuitively, we are seeking a coordinate partition $\mathbf{S}$ subject to cardinality constraint $m$ in order to \emph{minimize} the distance to independence of the product chain $\otimes_{i=1}^k P^{(S_i)}$.}
We are restricting $m$ to be at least $k+1$ following the pigeonhole principle, as we need at least one $S_i$ with $|S_i| > 1$. If $m \leq k$, we can take either $S_i = \{e\}$ or $S_i = \emptyset$ for all $i\in \llbracket k \rrbracket$ so that the optimization problem becomes trivial.

{\color{black}Making use of Theorem~\ref{thm:monotonize-k}, we see that the function $g$ given by 
\begin{align}\label{eq:g_gen_dist2indp}
    g(\mathbf{S}) &= f(\mathbf{S}) - \beta + \sum_{i=1}^k \sum_{e\in S_i} [H(P^{(V_i \backslash \{e\})}) + H(P^{(e)}) - H(P^{(V_i)})]\\
    &= f(\mathbf{S}) - \beta + \sum_{i=1}^k \sum_{e\in S_i} D(P^{(V_i)} \| P^{(V_i \backslash \{e\})} \otimes P^{(e)})\nonumber
\end{align}
is monotonically non-decreasing and $k$-submodular. We now take $\beta = 0$ and the function $c$ to be  
\begin{align}\label{eq:c_gen_dist2indp}
    c(\mathbf{S}) = \sum_{i=1}^k \sum_{e\in S_i} D(P^{(V_i)} \| P^{(V_i \backslash \{e\})} \otimes P^{(e)}),
\end{align}
which is non-negative and modular. This allows us to express $f(\mathbf{S}) = g(\mathbf{S}) - c(\mathbf{S})$.} By applying Algorithm~\ref{alg:generalized_distorted_greedy}, we obtain a result with the following lower bound by Theorem~\ref{thm:lb_gen_distgrdy}. 
\begin{corollary}\label{cor:gen_dist2indp}
    Let $P\in \mathcal{L}(\mathcal{X})$ be $\pi$-stationary along with $d, m \geq k+1$ and $\mathbf{V} \in (k+1)^{\llbracket d \rrbracket}$. In Algorithm~\ref{alg:generalized_distorted_greedy}, we take $g$ as in~\eqref{eq:g_gen_dist2indp}, $c$ as in~\eqref{eq:c_gen_dist2indp}, and $\mathbf{OPT} = \argmax_{\mathbf{S} \preceq \mathbf{V};~ |\mathrm{supp}(\mathbf{S})| = m} f(\mathbf{S})$, then by Theorem~\ref{thm:lb_gen_distgrdy}, we have the following lower bound
    \begin{align*}
        f(\mathbf{S}_m) = -\mathbb{I}(\otimes_{i=1}^k P^{(S_{m,i})}) \geq (1 - e^{-1}) g(\mathbf{OPT}) - c(\mathbf{OPT}),
    \end{align*}
    where $\mathbf{S}_m = (S_{m, 1}, \ldots, S_{m, k})$ is the output of Algorithm~\ref{alg:generalized_distorted_greedy}.
\end{corollary}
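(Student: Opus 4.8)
The plan is to check that the $g$ in \eqref{eq:g_gen_dist2indp} and the $c$ in \eqref{eq:c_gen_dist2indp} satisfy all the hypotheses of Theorem~\ref{thm:lb_gen_distgrdy} --- namely that $g$ is monotonically non-decreasing and $k$-submodular with $g(\emptyset)\ge 0$, and that $c$ is non-negative, modular, with $c(\emptyset)=0$ --- after which the stated bound is an immediate consequence of $f=g-c$. First I would rewrite the objective: by Lemma~\ref{lem:ind_dist2indp}, $f(\mathbf{S}) = -\mathbb{I}(\otimes_{i=1}^k P^{(S_i)}) = -\sum_{i=1}^k\mathbb{I}(P^{(S_i)})$, and by Lemma~\ref{lem:dist2indp_orthant_supermodular} the map $\mathbf{S}\mapsto\mathbb{I}(\otimes_{i=1}^k P^{(S_i)})$ is orthant supermodular, so that $f$ is orthant submodular. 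This is precisely the input hypothesis needed by Theorem~\ref{thm:monotonize-k}.

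I would then apply Theorem~\ref{thm:monotonize-k} to this $f$ with $\beta=0$ and the given $\mathbf{V}\in(k+1)^{\llbracket d\rrbracket}$, which directly produces a monotonically non-decreasing, $k$-submodular function on $\{\mathbf{S}\preceq\mathbf{V}\}$ of the form $f(\mathbf{S}) + \sum_{i=1}^k\sum_{e\in S_i}\big(f(V_1,\ldots,V_i\backslash\{e\},\ldots,V_k) - f(V_1,\ldots,V_i,\ldots,V_k)\big)$. The remaining step is the routine algebra identifying this with the explicit $g$ of \eqref{eq:g_gen_dist2indp}: using $\mathbb{I}(P^{(S)}) = \sum_{j\in S}H(P^{(j)}) - H(P^{(S)})$ one finds that each inner summand equals $H(P^{(V_i\backslash\{e\})}) + H(P^{(e)}) - H(P^{(V_i)}) = D(P^{(V_i)}\,\|\,P^{(V_i\backslash\{e\})}\otimes P^{(e)})$, a KL divergence and hence non-negative. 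Consequently $c = g - f$ as in \eqref{eq:c_gen_dist2indp} is a sum over singletons $e\in S_i$ of these non-negative terms, so it is modular and non-negative with $c(\emptyset)=0$; and $g(\emptyset) = f(\emptyset) = -\sum_{i=1}^k\mathbb{I}(P^{(\emptyset)}) = 0\ge 0$ since $P^{(\emptyset)}=1$.

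With all hypotheses verified, Theorem~\ref{thm:lb_gen_distgrdy} gives $g(\mathbf{S}_m) - c(\mathbf{S}_m)\ge (1-e^{-1})g(\mathbf{OPT}) - c(\mathbf{OPT})$, and since $f(\mathbf{S}_m) = g(\mathbf{S}_m) - c(\mathbf{S}_m)$ this is exactly the claim. One detail to address is that Theorem~\ref{thm:lb_gen_distgrdy} is stated for $\mathbf{OPT}$ maximizing over $|\mathrm{supp}(\mathbf{S})|\le m$, whereas here $\mathbf{OPT}$ maximizes over $|\mathrm{supp}(\mathbf{S})| = m$; this is harmless because the proof of Lemma~\ref{lem:gen_dist_grdy_lem2} uses only $|\mathrm{supp}(\mathbf{OPT})|\le m$ (in the first inequality bounding $m\Psi_i$), so the entire lower-bound argument applies verbatim to any $\mathbf{S}\preceq\mathbf{V}$ with $|\mathrm{supp}(\mathbf{S})|\le m$, in particular to the $|\mathrm{supp}|=m$ maximizer. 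I expect the only mild obstacle to be the bookkeeping of the second paragraph --- confirming that the abstract monotonization of Theorem~\ref{thm:monotonize-k} reduces exactly to the formula \eqref{eq:g_gen_dist2indp} --- together with this constraint-swap remark; the rest is direct citation, the conceptual point being that although $\mathbb{I}(\otimes_i P^{(S_i)})$ fails to be $k$-supermodular (it is pairwise monotonically increasing rather than decreasing), its orthant submodularity alone is enough to feed into Theorem~\ref{thm:monotonize-k}.
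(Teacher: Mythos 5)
Your proposal is correct and follows essentially the same route as the paper, whose justification of Corollary~\ref{cor:gen_dist2indp} is exactly the preceding discussion: Lemma~\ref{lem:ind_dist2indp} and Lemma~\ref{lem:dist2indp_orthant_supermodular} give orthant submodularity of $f$, Theorem~\ref{thm:monotonize-k} with $\beta=0$ yields the $g$ of \eqref{eq:g_gen_dist2indp} and the non-negative modular $c$ of \eqref{eq:c_gen_dist2indp}, and Theorem~\ref{thm:lb_gen_distgrdy} is then invoked on $f=g-c$. Your additional check that the bound of Theorem~\ref{thm:lb_gen_distgrdy} still applies when $\mathbf{OPT}$ ranges over $|\mathrm{supp}(\mathbf{S})|=m$ rather than $\le m$ (since Lemma~\ref{lem:gen_dist_grdy_lem2} only uses $|\mathrm{supp}(\mathbf{OPT})|\le m$) is a valid refinement of a point the paper leaves implicit.
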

In the special case where $k=1$ and $\mathbf{V} = \llbracket d \rrbracket$, we recover Corollary~\ref{cor:dist2indp}.

\subsection{$k$-supermodular minimization of distance to independence of the tensorized keep-$V_i\backslash S_i$-in matrices $\mathbb{I}(\otimes_{i=1}^k P^{(V_i \backslash S_i)})$} \label{sec:gen_dist2indp_c}

For given $\mathbf{V} \in (k+1)^{\llbracket d \rrbracket}$, we consider the following map in view of Lemma~\ref{lem:ind_dist2indp},
\begin{align}\label{eq:map_gen_dist2indp_c}
    \{\mathbf{S} \in (k+1)^{\llbracket d \rrbracket};~\mathbf{S} \preceq \mathbf{V} \} \ni \mathbf{S} =(S_1, \ldots, S_k) \mapsto \mathbb{I}(\otimes_{i=1}^k P^{(V_i \backslash S_i)}) = \sum_{i=1}^k \mathbb I (P^{(V_i \backslash S_i)}).
\end{align}

We first prove a result concerning monotonicity and $k$-supermodularity of the map above.
\begin{lemma}
The map~\eqref{eq:map_gen_dist2indp_c} is monotonically non-increasing and $k$-supermodular.
\end{lemma}

\begin{proof}
    In view of Theorem~\ref{thm:dist2indp_complement}, for each component $S_i$, we take $V_i$ as the ground set, hence $\mathbb I (P^{(V_i \backslash S_i)})$ is monotonically non-increasing and supermodular. From Lemma \ref{lem:ind_dist2indp}, this function is the sum of $k$ monotonically non-increasing and supermodular functions. From Lemma \ref{lem:sum_nonincr_supmod}, we conclude that this map is $k$-supermodular and monotonically non-increasing.
\end{proof}
Therefore, we denote the following monotonically non-decreasing, $k$-submodular function $g$ as 
\begin{align}\label{eq:g_gen_dist2indp_c}
    g(\mathbf{S}) = -\mathbb{I}(\otimes_{i=1}^k P^{(V_i \backslash S_i)}) = -\sum_{i=1}^k \mathbb I(P^{(V_i \backslash S_i)}).
\end{align}
Given $d \geq k+1$, $m \leq d - k - 1$, we are interested in the following maximization problem given by 
\begin{align*}
    \max_{\mathbf{S} \preceq \mathbf{V};~ |\mathrm{supp}(\mathbf{S})| \leq m} g(\mathbf{S}).
\end{align*}

We are restricting $m$ by $m \leq d - k - 1$ following the pigeonhole principle, as we want $|V_i \backslash S_i| \geq 2$ for at least one $i$. If $m \geq d -k$, we can choose either $V_i \backslash S_i = \{e\}$ or $V_i \backslash S_i = \emptyset$ so that the optimization problem is trivial.

By \textcolor{black}{specializing into} $c=0$ as a non-negative modular function, we apply Algorithm~\ref{alg:generalized_distorted_greedy} to obtain an optimization result with $(1 - e^{-1})$-approximation guarantee.
\begin{corollary}
    Let $P \in \mathcal{L}(\mathcal{X})$ be $\pi$-stationary along with $d \geq k + 1$, $m \leq d - k - 1$ and $\mathbf{V} \in (k+1)^{\llbracket d \rrbracket}$. In Algorithm~\ref{alg:generalized_distorted_greedy}, we take $g$ as in \eqref{eq:g_gen_dist2indp_c}, $c=0$ and denote \begin{align*}
        \mathbf{OPT} = \argmax_{\mathbf{S} \preceq \mathbf{V};~ |\mathrm{supp}(\mathbf{S})| \leq m} g(\mathbf{S}).
    \end{align*}
    From Theorem~\ref{thm:lb_gen_distgrdy}, we obtain the following lower bound
    \begin{align*}
        g(\mathbf{S}_m) \geq (1 - e^{-1}) g(\mathbf{OPT}),
    \end{align*}
    where $\mathbf{S}_m = (S_{m, 1}, \ldots, S_{m, k})$ is the output of Algorithm~\ref{alg:generalized_distorted_greedy}.
\end{corollary}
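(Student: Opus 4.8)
The plan is to recognize the claim as a direct instance of Theorem~\ref{thm:lb_gen_distgrdy} and to verify that the pair $(g,c)$ with $g$ as in \eqref{eq:g_gen_dist2indp_c} and $c\equiv 0$ satisfies all of its hypotheses; the asserted bound $g(\mathbf{S}_m)\geq(1-e^{-1})g(\mathbf{OPT})$ then follows by specializing the conclusion $g(\mathbf{S}_m)-c(\mathbf{S}_m)\geq(1-e^{-1})g(\mathbf{OPT})-c(\mathbf{OPT})$ of that theorem to $c\equiv 0$, since $c(\mathbf{S}_m)=c(\mathbf{OPT})=0$.

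The one substantive hypothesis to check is that $g$ is monotonically non-decreasing and $k$-submodular on $\{\mathbf{S}\in(k+1)^{\llbracket d\rrbracket}:\mathbf{S}\preceq\mathbf{V}\}$. This is exactly the negation of the theorem stated immediately before the corollary, so I would either cite that theorem directly or recall its short argument: by Lemma~\ref{lem:ind_dist2indp} the map $\mathbf{S}\mapsto\mathbb{I}(\otimes_{i=1}^k P^{(V_i\backslash S_i)})$ decomposes as $\sum_{i=1}^k\mathbb{I}(P^{(V_i\backslash S_i)})$; for each fixed $i$, taking $V_i$ as the ambient ground set, Theorem~\ref{thm:dist2indp_complement} yields that $S_i\mapsto\mathbb{I}(P^{(V_i\backslash S_i)})$ is monotonically non-increasing and supermodular; Lemma~\ref{lem:sum_nonincr_supmod} then upgrades this finite sum to a $k$-supermodular, monotonically non-increasing function, and negation gives that $g$ is $k$-submodular and monotonically non-decreasing, as required.

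The remaining conditions are boundary and sign conditions. With $c\equiv 0$ we trivially have $c$ non-negative, modular, and $c(\emptyset)=0$. The only point needing attention is $g(\emptyset)\geq 0$: here $g(\emptyset)=-\sum_{i=1}^k\mathbb{I}(P^{(V_i)})\leq 0$, so to apply Theorem~\ref{thm:lb_gen_distgrdy} verbatim one should either restrict to the case in which each $P^{(V_i)}$ is factorizable (so that $g(\emptyset)=0$), or replace $g$ by $g+\sum_{i=1}^k\mathbb{I}(P^{(V_i)})$, a constant shift that preserves both $k$-submodularity and monotonicity and alters the guarantee only by an additive $e^{-1}\sum_{i}\mathbb{I}(P^{(V_i)})$. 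I would also note that the restriction $m\leq d-k-1$ is the pigeonhole condition forcing $|V_i\backslash S_i|\geq 2$ for some $i$, which keeps the optimization non-degenerate but plays no role in the estimate.

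I do not anticipate a genuine obstacle here: all the analytic content sits in Theorem~\ref{thm:dist2indp_complement}, Lemma~\ref{lem:ind_dist2indp}, Lemma~\ref{lem:sum_nonincr_supmod}, and Theorem~\ref{thm:lb_gen_distgrdy}, which are already established, so the ``hard part'' is purely the bookkeeping needed to confirm that $g$ is an admissible input to the generalized distorted greedy algorithm — in particular, handling the $g(\emptyset)\geq 0$ normalization cleanly — after which the lower bound is immediate.
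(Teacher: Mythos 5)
Your proposal is correct and follows the same route as the paper: the corollary is, in the paper, a direct application of the immediately preceding theorem (the map \eqref{eq:map_gen_dist2indp_c} is monotonically non-increasing and $k$-supermodular, hence $g$ in \eqref{eq:g_gen_dist2indp_c} is monotonically non-decreasing and $k$-submodular) together with Theorem~\ref{thm:lb_gen_distgrdy} applied with $c\equiv 0$, with no further argument given. Your remark about the hypothesis $g(\emptyset)\geq 0$ goes beyond the paper and is well taken: here $g(\emptyset)=-\sum_{i=1}^k\mathbb{I}(P^{(V_i)})\leq 0$, typically strictly negative, so Theorem~\ref{thm:lb_gen_distgrdy} does not apply verbatim, a point the corollary glosses over. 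Since the only place $g(\emptyset)\geq 0$ enters that theorem's proof is the bound $\Phi_0(\mathbf{S}_0)\geq 0$, your constant-shift repair (replace $g$ by $g+\sum_{i=1}^k\mathbb{I}(P^{(V_i)})$, which leaves all marginal gains, hence the algorithm's output and the maximizer, unchanged) is the right fix, and it yields $g(\mathbf{S}_m)\geq(1-e^{-1})g(\mathbf{OPT})-e^{-1}\sum_{i=1}^k\mathbb{I}(P^{(V_i)})$ rather than the stated bound, the two coinciding exactly when each $P^{(V_i)}$ is factorizable.
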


\section{Supermodular minimization of distance to stationarity $D(P^{(S)} \| \Pi ^{(S)})$}\label{sec:dist2stat}

In this section, we \textcolor{black}{examine} the following map:
\begin{align}\label{eq:map_dist2stationarity}
    2^{\llbracket d \rrbracket} \ni S \mapsto D(P^{(S)} \| \Pi ^{(S)}),
\end{align}
where $\Pi$ is the matrix of stationary distribution with each row of $\Pi$ being $\pi$. We first show that this map is monotonically non-decreasing.

\begin{lemma}\label{lem:dist2statnondecrease}
    The map \eqref{eq:map_dist2stationarity} is monotonically non-decreasing.
\end{lemma}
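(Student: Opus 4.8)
The plan is to deduce monotonicity from the partition lemma (Theorem~\ref{thm:part_lem}). Fix $S \subseteq T \subseteq \llbracket d \rrbracket$; we must show $D(P^{(S)} \| \Pi^{(S)}) \le D(P^{(T)} \| \Pi^{(T)})$. The idea is to regard $P^{(T)}$ as a transition matrix on the reduced product space $\mathcal{X}^{(T)}$ (with coordinate set $T$), paired with its stationary distribution $\pi^{(T)}$, and to apply the partition lemma on $\mathcal{X}^{(T)}$ to the matrices $P^{(T)}, \Pi^{(T)}$ and the subset $S \subseteq T$. Morally, $D(P^{(S)}\|\Pi^{(S)})$ is the mutual information $I(X_0^{(S)};X_1^{(S)})$ of two consecutive states of the chain $P$ started at stationarity, and monotonicity in $S$ is a data-processing statement; the partition lemma is exactly the tool that packages this. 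Two ingredients are needed: (i) $P^{(T)}$ is $\pi^{(T)}$-stationary and $\Pi^{(T)}$ is $\pi^{(T)}$-stationary, so that the notation $D(P^{(T)}\|\Pi^{(T)})$ makes sense; and (ii) the keep-$S$-in operation composes correctly, namely $(P^{(T)})^{(S)} = P^{(S)}$ and $(\Pi^{(T)})^{(S)} = \Pi^{(S)}$, where the inner keep-$S$-in is taken with respect to $\pi^{(T)}$.

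For (i), use the probabilistic reading of the keep-in matrix: letting $(X_0,X_1)$ have $X_0\sim\pi$ and $X_1\mid X_0\sim P(X_0,\cdot)$, stationarity gives $X_1\sim\pi$, the row $P^{(T)}(x^{(T)},\cdot)$ is the conditional law of $X_1^{(T)}$ given $X_0^{(T)}=x^{(T)}$, and $X_0^{(T)},X_1^{(T)}$ both have marginal $\pi^{(T)}$, so $\pi^{(T)}P^{(T)}=\pi^{(T)}$; the claim for $\Pi^{(T)}$ is trivial since every row of $\Pi^{(T)}$ is $\pi^{(T)}$. For (ii), work directly from the definition of the leave-out matrices. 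Writing the edge measure $\mu(x,y):=\pi(x)P(x,y)$, the leave-$A$-out matrix $P_\pi^{(-A)}$ is obtained by marginalizing $\mu$ over $x^{(A)},y^{(A)}$ and normalizing by $\pi^{(-A)}(x^{(-A)})$; in particular the edge measure of $P^{(T)}=P_\pi^{(-(\llbracket d\rrbracket\setminus T))}$ with respect to $\pi^{(T)}$ equals the $(T,T)$-marginal of $\mu$. Applying the leave-$(T\setminus S)$-out construction to $P^{(T)}$ with reference $\pi^{(T)}$ then marginalizes $\mu$ over the remaining coordinates $T\setminus S$, i.e.\ over $(\llbracket d\rrbracket\setminus T)\cup(T\setminus S)=\llbracket d\rrbracket\setminus S$ in total, which is precisely $P_\pi^{(-(\llbracket d\rrbracket\setminus S))}=P^{(S)}$; the identical computation with $\Pi$ (all rows $\pi$) in place of $P$ gives $(\Pi^{(T)})^{(S)}=\Pi^{(S)}$.

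Granting (i) and (ii), the partition lemma on $\mathcal{X}^{(T)}$ with reference $\pi^{(T)}$, matrices $P^{(T)},\Pi^{(T)}$ and subset $S$ yields
\[
D(P^{(T)}\|\Pi^{(T)}) = D^{\pi^{(T)}}_{KL}(P^{(T)}\|\Pi^{(T)}) \;\ge\; D^{\pi^{(S)}}_{KL}((P^{(T)})^{(S)}\|(\Pi^{(T)})^{(S)}) = D^{\pi^{(S)}}_{KL}(P^{(S)}\|\Pi^{(S)}) = D(P^{(S)}\|\Pi^{(S)}),
\]
so the map is monotonically non-decreasing (the degenerate case $S=\emptyset$ is included, the right side then being $0\le D(P^{(T)}\|\Pi^{(T)})$). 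Everything except (ii) is a direct appeal to Theorem~\ref{thm:part_lem}; I expect the main obstacle to be the bookkeeping in (ii) --- correctly tracking the $\pi$-weighted normalizations when two leave-out operations are composed --- but this is routine and may already be available in Section~2.2 of \cite{choi2024ratedistortionframeworkmcmcalgorithms}, in which case it can simply be quoted.
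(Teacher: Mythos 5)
Your proof is correct and follows essentially the same route as the paper: the paper's proof is a one-line appeal to the partition lemma (Theorem~\ref{thm:part_lem}) for $S \subseteq T$, which implicitly relies on exactly the facts you verify, namely that $P^{(T)}$ is $\pi^{(T)}$-stationary and that the keep-in projections compose, $(P^{(T)})^{(S)} = P^{(S)}$ and $(\Pi^{(T)})^{(S)} = \Pi^{(S)}$. Your write-up simply makes explicit the intermediate application on $\mathcal{X}^{(T)}$ that the paper leaves unstated.
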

\begin{proof}
    We choose $S \subseteq T \subseteq \llbracket d \rrbracket$. By the partition lemma (Theorem \ref{thm:part_lem}), we have 
    $$ D(P^{(S)} \| \Pi ^{(S)}) \leq  D(P^{(T)} \| \Pi ^{(T)}),$$ 
    and hence this map is monotonically non-decreasing.
\end{proof}

{\color{black}Intuitively, the monotonicity of the map \eqref{eq:map_dist2stationarity} means that the larger the set, the projected multivariate Markov chain is further away to stationarity in one-step.}

We are interested in the following optimization problem
\begin{align*}
    \max_{S \subseteq \llbracket d \rrbracket;~ |S|=m} D(P^{(S)} \| \Pi^{(S)}),
\end{align*}
as solving the above can help to identify coordinates which are furthest away from the equilibrium in one step. \textcolor{black}{This is for instance useful in improving sampling via MCMC using a heuristic that we propose in Section \ref{subsec:MCMC} below.}

To solve this optimization problem with a theoretical guarantee, we recall the batch greedy algorithm (Algorithm~\ref{alg:batch_greedy}, see Theorem 7 of~\cite{jagalur2021batch}).

\begin{algorithm}
\caption{\textbf{Batch greedy algorithm}}\label{alg:batch_greedy}
\begin{algorithmic}[1]
\Require monotonically non-decreasing set function $f$; ground set $U$; total cardinality constraint $m$; number of steps $l$ and cardinality constraints $q_i$ such that $\sum_{i=1}^l q_i = m$
\State Initialize $S_0 = \emptyset$
\For{$i = 1$ to $l$}
    \State Determine incremental gains $f(S_{i-1} \cup \{e\}) - f(S_{i-1})$, $\forall e\in U \backslash S_{i-1}$
    \State Find $Q$, comprising the elements with top-$q_i$ incremental gains
    \State $S_{i} \gets S_{i-1} \cup Q$
\EndFor
\State \textbf{Output}: $S_l$
\end{algorithmic}
\end{algorithm}

It turns out that the theoretical guarantee depends on the supermodularity ratio and submodularity ratio of a set function $f$, that we shall now briefly recall. The \textbf{supermodularity ratio} of a non-negative set function $f$ (Definition 6 of \cite{jagalur2021batch}) with respect to the set $U$ and a cardinality constraint $m \geq 1$ is 
\begin{align*}
    \eta_{U, m} := \min_{S \subseteq U;~ T: |T| \leq m,~ S\cap T = \emptyset} \frac{f(S \cup T) - f(S)}{\sum_{e \in T} [f(S\cup \{e\}) - f(S)]},
\end{align*}
while the \textbf{submodularity ratio} of $f$ (Definition 32 of \cite{jagalur2021batch}) with respect to the set $U$ and a cardinality constraint $k \geq 1$ is 
\begin{align*}
    \gamma_{U, m} := \min_{S \subseteq U;~ T: |T| \leq m,~ S\cap T = \emptyset} \frac{\sum_{e \in T} [f(S\cup \{e\}) - f(S)]}{f(S \cup T) - f(S)}.
\end{align*}
We then state the lower bound pertaining to Algorithm~\ref{alg:batch_greedy} (see Theorem 7 of~\cite{jagalur2021batch}).

\begin{theorem}[Lower bound for batch greedy algorithm] \label{thm:lb_batch_greedy}
    Let $P \in \mathcal{L}(\mathcal{X})$ be $\pi$-stationary and $U$ be the ground set. Let $f$ be a monotonically non-decreasing set function with $f(\emptyset) = 0$. Algorithm~\ref{alg:batch_greedy} yields the following lower bound
    \begin{align*}
        f(S_l) \geq \left(1 - \prod_{i=1}^l \left(1 - \frac{q_i \cdot \eta_{U, q_i} \cdot \gamma_{U, m}}{m}\right)\right) \max_{S \subseteq U;~ |S| = m} f(S),
    \end{align*}
    where $S_l$ is the output set of Algorithm~\ref{alg:batch_greedy}.
\end{theorem}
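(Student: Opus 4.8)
The statement is a restatement of Theorem~7 of \cite{jagalur2021batch} specialized to a monotone non-decreasing objective, so the most economical plan is to verify the hypotheses of that theorem and invoke it directly: for the intended application $f = D(P^{(\cdot)}\,\|\,\Pi^{(\cdot)})$ is monotonically non-decreasing by Lemma~\ref{lem:dist2statnondecrease}, is non-negative as a KL divergence, and satisfies $f(\emptyset) = D(P^{(\emptyset)}\,\|\,\Pi^{(\emptyset)}) = 0$ under the convention $P^{(\emptyset)} = 1$. For completeness I would also include a self-contained proof following the standard analysis of greedy algorithms under bounded submodularity and supermodularity ratios, which I sketch below.

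Fix $S^\ast \in \argmax_{S \subseteq U;~ |S| = m} f(S)$, write $\mathrm{OPT} := f(S^\ast)$, and abbreviate $\rho_i := q_i\,\eta_{U,q_i}\,\gamma_{U,m}/m$. The core of the argument is the per-batch progress estimate
\[
 f(S_i) - f(S_{i-1}) \ \ge\ \rho_i\,\big(\mathrm{OPT} - f(S_{i-1})\big), \qquad i = 1,\ldots,l,
\]
which I would prove via the following chain. Let $T_i := S^\ast \setminus S_{i-1}$; since all elements added so far are new one has $|S_{i-1}| = \sum_{j<i} q_j$, whence $q_i \le |T_i| \le m$ (indeed $|T_i| = m - |S^\ast \cap S_{i-1}| \ge m - |S_{i-1}| = \sum_{j\ge i} q_j \ge q_i$, and $|T_i| \le |S^\ast| = m$). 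Applying the supermodularity ratio with base set $S_{i-1}$ and the $q_i$-element batch $Q$ selected by the algorithm gives $f(S_i) - f(S_{i-1}) \ge \eta_{U,q_i}\sum_{e\in Q}\big(f(S_{i-1}\cup\{e\}) - f(S_{i-1})\big)$. Because monotonicity makes all incremental gains non-negative and $Q$ collects the top $q_i$ gains over $U\setminus S_{i-1} \supseteq T_i$, an averaging argument gives $\sum_{e\in Q}\big(f(S_{i-1}\cup\{e\}) - f(S_{i-1})\big) \ge \tfrac{q_i}{|T_i|}\sum_{e\in T_i}\big(f(S_{i-1}\cup\{e\}) - f(S_{i-1})\big) \ge \tfrac{q_i}{m}\sum_{e\in T_i}\big(f(S_{i-1}\cup\{e\}) - f(S_{i-1})\big)$. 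The submodularity ratio applied with $S_{i-1}$ and $T_i$ bounds the last sum below by $\gamma_{U,m}\big(f(S_{i-1}\cup T_i) - f(S_{i-1})\big) = \gamma_{U,m}\big(f(S_{i-1}\cup S^\ast) - f(S_{i-1})\big)$, and monotonicity gives $f(S_{i-1}\cup S^\ast) \ge f(S^\ast) = \mathrm{OPT}$. Stringing the three inequalities together yields the displayed estimate.

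To finish, set $g_i := \mathrm{OPT} - f(S_i)$. The per-batch estimate rearranges to $g_i \le (1-\rho_i)\,g_{i-1}$, and $f(S_0) = f(\emptyset) = 0$ gives $g_0 = \mathrm{OPT}$; unrolling over $i=1,\ldots,l$ yields $g_l \le \big(\prod_{i=1}^l (1-\rho_i)\big)\,\mathrm{OPT}$, that is $f(S_l) \ge \big(1 - \prod_{i=1}^l (1-\rho_i)\big)\,\mathrm{OPT}$, which is the claimed bound.

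The step I expect to need the most care is the averaging inequality yielding the factor $q_i/m$: one must track cardinalities to see that $q_i \le |T_i| \le m$ (so that ``top $q_i$'' is meaningful and $q_i/|T_i| \ge q_i/m$), and then argue that the sum of the top $q_i$ incremental gains over $U\setminus S_{i-1}$ dominates $q_i/|T_i|$ times the total gain over $T_i$ — which rests on non-negativity of the gains, hence on monotonicity of $f$. The two ratio bounds are immediate from the definitions of $\eta_{U,q_i}$ and $\gamma_{U,m}$, provided one keeps straight which direction each ratio controls; everything else is the routine telescoping recursion ubiquitous in greedy analyses.
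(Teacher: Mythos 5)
Your proposal matches the paper's treatment: the paper does not prove this statement but simply recalls it as Theorem 7 of \cite{jagalur2021batch}, which is exactly your primary route of verifying the hypotheses (monotonicity, $f(\emptyset)=0$) and invoking that theorem. Your supplementary self-contained sketch is also sound — the cardinality bookkeeping $q_i \leq |T_i| \leq m$, the top-$q_i$ averaging step, and the two ratio inequalities are applied in the right directions and telescope to the stated bound — so it goes beyond, but is consistent with, what the paper provides.
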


Since we have a monotonically mon-decreasing map~\eqref{eq:map_dist2stationarity} with $D(P^{(\emptyset)} \| \Pi^{(\emptyset)}) = 0$, we can apply the Algorithm~\ref{alg:batch_greedy} (see Theorem 7 of~\cite{jagalur2021batch}) with the following lower bound.

\begin{corollary}
    Let $P \in \mathcal{L}(\mathcal{X})$ be $\pi$-stationary and $U = \llbracket d \rrbracket$ be the ground set. Let $f$ be \eqref{eq:map_dist2stationarity} which is a monotonically non-decreasing set function with $f(\emptyset) = 0$. \textcolor{black}{In view of Theorem~\ref{thm:lb_batch_greedy},} Algorithm~\ref{alg:batch_greedy} yields the following lower bound
    \begin{align*}
        f(S_l) \geq \left(1 - \prod_{i=1}^l \left(1 - \frac{q_i \cdot \eta_{U, q_i} \cdot \gamma_{U, m}}{m}\right)\right) \max_{S \subseteq \llbracket d \rrbracket;~ |S| = m} f(S),
    \end{align*}
    where $S_l$ is the output set of Algorithm~\ref{alg:batch_greedy}.
\end{corollary}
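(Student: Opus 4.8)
The plan is to check that the objective $f$ of \eqref{eq:map_dist2stationarity} meets every hypothesis of Theorem~\ref{thm:lb_batch_greedy}, after which the bound is an immediate instantiation of that theorem with $U = \llbracket d \rrbracket$ and the prescribed schedule $(q_i)_{i=1}^l$ satisfying $\sum_{i=1}^l q_i = m$.

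First I would record that $f$ is monotonically non-decreasing: this is precisely the content of Lemma~\ref{lem:dist2statnondecrease}, obtained by applying the partition lemma (Theorem~\ref{thm:part_lem}) to $P$ and $\Pi$ over any $S \subseteq T \subseteq \llbracket d \rrbracket$. Next I would verify the normalization $f(\emptyset) = 0$. By the convention $P^{(\emptyset)} = 1$ recalled in Section~\ref{subsec:infotheoryMC}, and since every row of $\Pi$ equals $\pi$ so that the keep-$S$-in construction applied to $\Pi$ also collapses to the trivial $1 \times 1$ matrix when $S = \emptyset$, i.e. $\Pi^{(\emptyset)} = 1$, we get $f(\emptyset) = D(P^{(\emptyset)} \| \Pi^{(\emptyset)}) = D(1\|1) = 0$. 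Non-negativity of $f$ then follows either directly from the non-negativity of KL divergence between Markov chains, or simply from monotonicity together with $f(\emptyset) = 0$; this is what makes the supermodularity ratio $\eta_{U,q_i}$ and submodularity ratio $\gamma_{U,m}$ of $f$ (as defined just above Theorem~\ref{thm:lb_batch_greedy}) well posed.

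Having confirmed these properties, I would invoke Theorem~\ref{thm:lb_batch_greedy} verbatim with ground set $U = \llbracket d \rrbracket$, which yields
\begin{align*}
    f(S_l) \geq \left(1 - \prod_{i=1}^l \left(1 - \frac{q_i \cdot \eta_{U, q_i} \cdot \gamma_{U, m}}{m}\right)\right) \max_{S \subseteq \llbracket d \rrbracket;~ |S| = m} f(S),
\end{align*}
where $S_l$ is the output of Algorithm~\ref{alg:batch_greedy}. This is exactly the asserted inequality.

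There is essentially no hard step here, since the corollary is a direct specialization. The only point requiring a moment's care is the identification $\Pi^{(\emptyset)} = 1$, which I would justify as above by observing that the rank-one structure of $\Pi$ (constant rows $\pi$) is preserved under the leave-$S$-out/keep-$S$-in operations, so the $S = \emptyset$ case degenerates to the scalar $1$ just as for $P$. One may additionally remark, as the paper does in analogous corollaries, that the bound is informative only when $\eta_{U,q_i}, \gamma_{U,m} > 0$, which holds as long as the relevant incremental gains of $f$ are strictly positive; but this observation is not needed for the inequality itself.
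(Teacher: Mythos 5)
Your proposal is correct and follows essentially the same route as the paper: establish monotonicity via Lemma~\ref{lem:dist2statnondecrease} (the partition lemma), note $f(\emptyset)=D(P^{(\emptyset)}\|\Pi^{(\emptyset)})=0$ by the convention $P^{(\emptyset)}=\Pi^{(\emptyset)}=1$, and then apply Theorem~\ref{thm:lb_batch_greedy} with $U=\llbracket d\rrbracket$. The extra remarks on non-negativity and the well-posedness of the ratios are fine but not needed beyond what the paper itself does.
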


We now consider the special case where the stationary distribution $\pi$ is of product form. In this case, we can show the supermodularity of the map~\eqref{eq:map_dist2stationarity}.

\begin{lemma}\label{lem:dist2stationarity}
    The map \eqref{eq:map_dist2stationarity} is supermodular if $P$ is $\pi$-stationary where $\pi$ is of product form.
\end{lemma}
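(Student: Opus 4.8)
The plan is to rewrite the map \eqref{eq:map_dist2stationarity} in closed form as an entropy term minus the entropy rate of $P^{(S)}$, and then invoke the known submodularity of $S \mapsto H(P^{(S)})$ (Theorem~\ref{thm:submod_mc}) together with the modularity of $S \mapsto H(\pi^{(S)})$ that the product-form hypothesis buys us.

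First I would record two preliminary facts. Since every row of $\Pi$ equals $\pi$, a direct substitution of $\Pi(x,y)=\pi(y)$ into the definition of the keep-$S$-in matrix shows that every row of $\Pi^{(S)}$ equals $\pi^{(S)}$, the marginal of $\pi$ on the coordinates in $S$; in particular $\Pi^{(S)}(x^{(S)},y^{(S)}) = \pi^{(S)}(y^{(S)})$. Second, when $P$ is $\pi$-stationary, $P^{(S)}$ is $\pi^{(S)}$-stationary: from the definition one has $\pi^{(S)}(x^{(S)})\,P^{(S)}(x^{(S)},y^{(S)}) = \sum_{x^{(-S)},\,y^{(-S)}} \pi(x) P(x,y)$, and summing this over $x^{(S)}$ and using $\pi P = \pi$ gives $\sum_{x^{(S)}} \pi^{(S)}(x^{(S)}) P^{(S)}(x^{(S)},y^{(S)}) = \pi^{(S)}(y^{(S)})$.

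Next I would expand the divergence. Writing $D(P^{(S)}\|\Pi^{(S)}) = D^{\pi^{(S)}}_{KL}(P^{(S)}\|\Pi^{(S)})$ and splitting the logarithm into $\ln P^{(S)} - \ln \Pi^{(S)}$, the $\ln P^{(S)}$ part contributes $-H(P^{(S)})$ by the definition of the entropy rate, while the $-\ln\Pi^{(S)}$ part equals $-\sum_{x^{(S)}} \pi^{(S)}(x^{(S)}) \sum_{y^{(S)}} P^{(S)}(x^{(S)},y^{(S)}) \ln \pi^{(S)}(y^{(S)})$, and collapsing the inner sum over $x^{(S)}$ via $\pi^{(S)}$-stationarity of $P^{(S)}$ turns this into $H(\pi^{(S)})$. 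Hence
$$D(P^{(S)}\|\Pi^{(S)}) = H(\pi^{(S)}) - H(P^{(S)}).$$

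Finally I would conclude: by Theorem~\ref{thm:submod_mc} the map $S\mapsto H(P^{(S)})$ is submodular, so $S\mapsto -H(P^{(S)})$ is supermodular; and under the product-form assumption $\pi=\otimes_{i=1}^d \pi_i$ we have $\pi^{(S)} = \otimes_{i\in S}\pi_i$, so $H(\pi^{(S)}) = \sum_{i\in S} H(\pi_i)$ is modular. A modular plus a supermodular function is supermodular, which is the claim. The only delicate points are the two preliminary facts (the row structure of $\Pi^{(S)}$ and the $\pi^{(S)}$-stationarity of $P^{(S)}$) and the sign bookkeeping in the divergence expansion; everything after the identity is immediate. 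I would also remark that the product-form hypothesis is essential here, since for a general $\pi$ the term $H(\pi^{(S)})$ is merely submodular (being the entropy of a marginal), and then $H(\pi^{(S)}) - H(P^{(S)})$ need not be supermodular.
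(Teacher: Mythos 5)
Your proof is correct and follows essentially the same route as the paper: expand the divergence to get the identity $D(P^{(S)}\|\Pi^{(S)}) = H(\pi^{(S)}) - H(P^{(S)})$ (using $\pi^{(S)}$-stationarity of $P^{(S)}$, which you justify more explicitly than the paper does), then combine supermodularity of $-H(P^{(S)})$ with modularity of $H(\pi^{(S)})$ under the product-form assumption. No gaps.
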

\begin{proof}
    \begin{align*}
        D(P^{(S)} \| \Pi ^{(S)}) &= \sum_{x^{(S)}} \sum_{y^{(S)}} \pi^{(S)}(x^{(S)}) P^{(S)}(x^{(S)}, y^{(S)}) \ln{\frac{P^{(S)} (x^{(S)}, y^{(S)})}{\pi^{(S)} (y^{(S)})}} \\
        &= -H(P^{(S)}) - \sum_{x^{(S)}} \sum_{y^{(S)}} \pi^{(S)}(x^{(S)}) P^{(S)}(x^{(S)}, y^{(S)}) \ln{\pi^{(S)} (y^{(S)})} \\
        &= -H(P^{(S)}) - \sum_{y^{(S)}} \ln{\pi^{(S)} (y^{(S)})} \sum_{x^{(S)}} \pi^{(S)}(x^{(S)}) P^{(S)}(x^{(S)}, y^{(S)})\\
        &= -H(P^{(S)}) + H(\pi^{(S)}).
    \end{align*}
    The last equation holds since $P$ is $\pi$-stationary and hence $$\pi^{(S)}(y^{(S)}) = \sum_{x^{(S)}} \pi^{(S)}(x^{(S)}) P^{(S)}(x^{(S)}, y^{(S)}).$$
    Since the stationary distribution $\pi$ is of product form, then 
    $\pi = \otimes_{i=1}^d \pi^{(i)}$, hence $H(\pi^{(S)}) = \sum_{i \in S} H(\pi^{(i)})$, which is a modular function.
    Also, since $H(P^{(S)})$ is submodular, then $-H(P^{(S)})$ is supermodular. Therefore, $D(P^{(S)} \| \Pi ^{(S)})$ is supermodular because it is a sum of a supermodular function and a modular function. 
\end{proof}

{\color{black} The supermodularity of map \eqref{eq:map_dist2stationarity} means that, when the stationary distribution $\pi$ of $P$ is of product form, the marginal gain of distance to stationarity by adding an element to a subset is at most equal to the marginal gain by adding the same element to a larger set. That is, when $S \subseteq T \subseteq \llbracket d \rrbracket$ and $e \notin T$, we have 
\[D(P^{(S \cup \{e\})} \| \Pi^{(S \cup \{e\})}) - D(P^{(S)} \| \Pi^{(S)})\leq D(P^{(T \cup \{e\})} \| \Pi^{(T \cup \{e\})}) - D(P^{(T)} \| \Pi^{(T)}).\]
}

We proceed to investigate the following \textcolor{black}{maximization task} when $P$ is $\pi$-stationary with product form $\pi$\textcolor{black}{:}
\begin{align*}
    \max_{S \subseteq \llbracket d \rrbracket;~|S| \leq m} f(S) = -D(P^{(S)} \| \Pi ^{(S)}).
\end{align*}

\textcolor{black}{In light of Theorem~\ref{thm:monotonize}, the following function $g$
\begin{align}\label{eq:g_dist2sta}
    g(S) &= f(S) - \beta + \sum_{e\in S} (H(P^{(-e)}) - H(\pi^{(-e)}) - H(P) + H(\pi)) \nonumber \\
    &= f(S) - \beta + \sum_{e\in S}(D(P\| P^{(e)} \otimes P^{(-e)}) + D(P^{(e)} \| \Pi^{(e)})).
\end{align}
is monotonically non-decreasing and submodular as $f$ is submodular. We again choose $\beta = 0$ and denote the following non-negative modular function as
\begin{align}\label{eq:c_dist2sta}
    c(S) = \sum_{e\in S}(D(P\| P^{(e)} \otimes P^{(-e)}) + D(P^{(e)} \| \Pi^{(e)})).
\end{align}
so that $f(S) = g(S) - c(S)$. It allows us to apply} Algorithm~\ref{alg:distorted_greedy} to obtain a result with the following lower bound:
\begin{corollary}\label{cor:dist2stat}
    Let $P\in \mathcal{L}(\mathcal{X})$ be $\pi$-stationary with $\pi$ to be product form. In Algorithm~\ref{alg:distorted_greedy}, we take $g$ as in~\eqref{eq:g_dist2sta}, $c$ as in~\eqref{eq:c_dist2sta}, and $\mathrm{OPT} = \argmax_{S \subseteq \llbracket d \rrbracket;~|S| \leq m} f(S)$. By Theorem~\ref{thm:lb_distgrdy}, we have the following lower bound
    \begin{align*}
        f(S_m) = -D(P^{(S_m)} \| \Pi ^{(S_m)}) \geq (1-e^{-1}) g(\mathrm{OPT}) - c(\mathrm{OPT}),
    \end{align*}
    where $S_m$ is the output set of Algorithm~\ref{alg:distorted_greedy}.
\end{corollary}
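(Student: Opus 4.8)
The plan is to check that the hypotheses of Theorem~\ref{thm:lb_distgrdy} are met by the stated $g$ and $c$, after which the bound is immediate. First I would recall, from the chain of equalities in the proof of Lemma~\ref{lem:dist2stationarity}, that under the product-form assumption $D(P^{(S)}\|\Pi^{(S)}) = -H(P^{(S)}) + H(\pi^{(S)})$, so that $f(S) = -D(P^{(S)}\|\Pi^{(S)}) = H(P^{(S)}) - H(\pi^{(S)})$; since $S \mapsto H(P^{(S)})$ is submodular (Theorem~\ref{thm:submod_mc}) and $S\mapsto H(\pi^{(S)}) = \sum_{i\in S}H(\pi^{(i)})$ is modular because $\pi$ is of product form, $f$ is submodular, and $f(\emptyset)=0$ by the usual conventions for $P^{(\emptyset)}$ and $\pi^{(\emptyset)}$. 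Note that $f$ is in fact monotonically non-increasing (Lemma~\ref{lem:dist2statnondecrease}), so the monotonization step below is genuinely needed.

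Next I would invoke Theorem~\ref{thm:monotonize} applied to this submodular $f$ with $\beta = 0$ and ground set $\llbracket d\rrbracket$: it guarantees that $S\mapsto f(S) + \sum_{e\in S}\bigl(f(\llbracket d\rrbracket\setminus\{e\}) - f(\llbracket d\rrbracket)\bigr)$ is monotonically non-decreasing and submodular. Substituting $f(\llbracket d\rrbracket) = H(P) - H(\pi)$ and $f(\llbracket d\rrbracket\setminus\{e\}) = H(P^{(-e)}) - H(\pi^{(-e)})$ identifies this function with $g$ as written in the first line of \eqref{eq:g_dist2sta}. The equivalence with the second line of \eqref{eq:g_dist2sta} is pure bookkeeping: one uses $H(P^{(-e)}) - H(P) = D(P\|P^{(e)}\otimes P^{(-e)}) - H(P^{(e)})$ (exactly as in \eqref{eq:centropyrate}), together with $H(\pi) - H(\pi^{(-e)}) = H(\pi^{(e)})$ from the product form and $H(\pi^{(e)}) - H(P^{(e)}) = D(P^{(e)}\|\Pi^{(e)})$, the latter being the same computation as in Lemma~\ref{lem:dist2stationarity} with $S = \{e\}$. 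Hence $g$ is monotonically non-decreasing, submodular, and $g(\emptyset) = f(\emptyset) = 0 \geq 0$.

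It then remains to observe that $c(S) = \sum_{e\in S}\bigl(D(P\|P^{(e)}\otimes P^{(-e)}) + D(P^{(e)}\|\Pi^{(e)})\bigr)$ is modular, being a sum of per-element weights, and non-negative, each weight being a sum of two KL divergences; in particular $c(\emptyset)=0$. By construction $g(S) - c(S) = f(S) - \beta = f(S)$, so Algorithm~\ref{alg:distorted_greedy} run with this pair $(g,c)$ solves exactly the cardinality-constrained maximization of $f$, and Theorem~\ref{thm:lb_distgrdy} yields $f(S_m) = g(S_m) - c(S_m) \geq (1-e^{-1})g(\mathrm{OPT}) - c(\mathrm{OPT})$, which is the claim. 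I do not anticipate a genuine obstacle here; the one place deserving care is confirming that the two expressions for $g$ displayed in \eqref{eq:g_dist2sta} coincide, i.e. the entropy-identity manipulation above, and checking that the product-form hypothesis is genuinely used there, since it is precisely what makes $S \mapsto H(\pi^{(S)})$ modular and lets $H(\pi) - H(\pi^{(-e)})$ collapse to $H(\pi^{(e)})$.
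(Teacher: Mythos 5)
Your proposal is correct and follows essentially the same route as the paper: identify $f(S)=-D(P^{(S)}\|\Pi^{(S)})=H(P^{(S)})-H(\pi^{(S)})$ as submodular via Lemma~\ref{lem:dist2stationarity} (product form making $H(\pi^{(S)})$ modular), monotonize with Theorem~\ref{thm:monotonize} at $\beta=0$ to obtain $g$ in \eqref{eq:g_dist2sta}, check that $c$ in \eqref{eq:c_dist2sta} is non-negative modular, and invoke Theorem~\ref{thm:lb_distgrdy}. Your extra verification that the two displayed forms of $g$ coincide via the entropy identities is exactly the bookkeeping the paper leaves implicit, so there is nothing to fix.
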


\subsection{Supermodular minimization of distance to stationarity of the complement set $D(P^{(-S)} \| \Pi ^{(-S)})$} \label{sec:dist2stat_c}

In this section, we investigate the following map:
\begin{align}\label{eq:map_dist2sta_c}
    2^{\llbracket d\rrbracket} \ni S \mapsto D(P^{(-S)} \| \Pi ^{(-S)}).
\end{align}

Owing to Lemma \ref{lem:dist2statnondecrease}, we first see that the map~\eqref{eq:map_dist2sta_c} is monotonically non-increasing. In addition, the map~\eqref{eq:map_dist2sta_c} is supermodular if $P$ is $\pi$-stationary with product form $\pi$ in view of Lemma~\ref{lem:complement_submodularity} and Lemma~\ref{lem:dist2stationarity}.

We are interested in the following optimization problem \begin{align*}
    \max_{S \subseteq \llbracket d \rrbracket;~|S|\leq m} f(S) = -D(P^{(-S)} \| \Pi ^{(-S)}),
\end{align*}
as solving the above allows us to identify coordinates whose complement set is the closest to equilibrium in one step.

Under the assumption of product form $\pi$, as the map~\eqref{eq:map_dist2sta_c} is monotonically non-increasing and supermodular, $f$ is monotonically non-decreasing and submodular. We apply the heuristic greedy algorithm (Section 4 of \cite{MR503866}) to obtain an approximate maximizer along with a $(1 - e^{-1})$-approximation guarantee.

\subsection{$k$-supermodular minimization of distance to stationarity of tensorized keep-$S_i$-in matrices $D(\otimes_{i=1}^k P^{(S_i)} \| \otimes_{i=1}^k \Pi^{(S_i)})$} \label{sec:gen_dist2stat}

In this section, for given $\mathbf{V} \in (k+1)^{\llbracket d \rrbracket}$, we \textcolor{black}{study} the following map:
\begin{align}\label{eq:map_gen_dist2sta}
    (k+1)^{\llbracket d \rrbracket} \ni \mathbf{S} &= (S_1,\ldots,S_k) \mapsto f(\mathbf{S}) = D(\otimes_{i=1}^k P^{(S_i)} \| \otimes_{i=1}^k \Pi^{(S_i)}).
\end{align}
We first give an orthant supermodularity result.
\begin{lemma}\label{lem:orthantsuperproduct}
    The map~\eqref{eq:map_gen_dist2sta} is orthant supermodular if $P$ is $\pi$-stationary where $\pi$ is of product form.
\end{lemma}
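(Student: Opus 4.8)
The plan is to follow the same route as the proof of Lemma~\ref{lem:dist2indp_orthant_supermodular}: reduce the $k$-dimensional divergence to a sum of one-dimensional distances to stationarity, rewrite each summand via Lemma~\ref{lem:dist2stationarity}, and then read off orthant supermodularity from the submodularity of $S \mapsto H(P^{(S)})$.

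First I would establish the additive decomposition
\[
D(\otimes_{i=1}^k P^{(S_i)} \| \otimes_{i=1}^k \Pi^{(S_i)}) = \sum_{i=1}^k D(P^{(S_i)} \| \Pi^{(S_i)}).
\]
This is the analogue of Lemma~\ref{lem:ind_dist2indp}. Since each $P^{(S_i)}$ is $\pi^{(S_i)}$-stationary and each $\Pi^{(S_i)}$ has all rows equal to $\pi^{(S_i)}$, the tensor products $\otimes_{i=1}^k P^{(S_i)}$ and $\otimes_{i=1}^k \Pi^{(S_i)}$ are both stationary with respect to $\otimes_{i=1}^k \pi^{(S_i)}$, and all three objects factorize over the $k$ (treated-as-independent) copies. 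Plugging these product forms into the definition of $D$ and using $\ln\prod_i a_i = \sum_i \ln a_i$ makes the sum over states separate and yields the claim; equivalently, one applies the chain rule for KL divergence of Markov chains exactly as in Lemma~\ref{lem:ind_dist2indp}, by induction on $k$. Note that this step uses only the product structure \emph{across} the $k$ copies, not product form of $\pi$ \emph{within} a single copy.

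Next, because $\pi$ is assumed to be of product form, Lemma~\ref{lem:dist2stationarity} gives $D(P^{(S_i)} \| \Pi^{(S_i)}) = -H(P^{(S_i)}) + H(\pi^{(S_i)})$, and moreover $H(\pi^{(S_i)}) = \sum_{e\in S_i} H(\pi^{(e)})$ is modular. Hence
\[
f(\mathbf{S}) = \sum_{i=1}^k \Big( -H(P^{(S_i)}) + \sum_{e\in S_i} H(\pi^{(e)}) \Big).
\]
Fix $\mathbf{S} \preceq \mathbf{T}$, $i \in \llbracket k \rrbracket$, and $e \notin \mathrm{supp}(\mathbf{T})$. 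The modular term contributes $H(\pi^{(e)})$ to both $\Delta_{e,i}f(\mathbf{S})$ and $\Delta_{e,i}f(\mathbf{T})$, so it cancels in the difference, leaving
\[
\Delta_{e,i}f(\mathbf{S}) - \Delta_{e,i}f(\mathbf{T}) = \big( H(P^{(T_i\cup\{e\})}) - H(P^{(T_i)}) \big) - \big( H(P^{(S_i\cup\{e\})}) - H(P^{(S_i)}) \big) \leq 0,
\]
where the inequality is precisely the submodularity of $S \mapsto H(P^{(S)})$ from Theorem~\ref{thm:submod_mc} applied to $S_i \subseteq T_i$. This is the defining inequality \eqref{eq:ort_submod} for orthant supermodularity, so $f$ is orthant supermodular.

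The only step requiring genuine care is the additive decomposition in the second paragraph, and in particular the bookkeeping of which reference distribution $D$ refers to once the index sets $S_i$ may share coordinates; but this is handled exactly as in Lemma~\ref{lem:ind_dist2indp}, so I do not expect a real obstacle. Everything after that is a short computation once Lemma~\ref{lem:dist2stationarity} is available.
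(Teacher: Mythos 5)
Your proposal is correct and follows essentially the same route as the paper: decompose $D(\otimes_{i=1}^k P^{(S_i)} \| \otimes_{i=1}^k \Pi^{(S_i)}) = \sum_{i=1}^k D(P^{(S_i)} \| \Pi^{(S_i)})$ via the chain rule/tensorization of KL divergence, then invoke the supermodularity of $S \mapsto D(P^{(S)} \| \Pi^{(S)})$ under product-form $\pi$ (Lemma~\ref{lem:dist2stationarity}) to get the orthant inequality. The only difference is cosmetic: you unfold that lemma's entropy decomposition $-H(P^{(S_i)}) + H(\pi^{(S_i)})$ and the cancellation of the modular term inline, whereas the paper cites the supermodularity conclusion as a black box.
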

\begin{proof}
    By the chain rule or tensorization property of KL divergence (see Theorem 2.15 and 2.16 of \cite{Polyanskiy_Wu_2025}), we see that
    \begin{align*}
        D(\otimes_{i=1}^k P^{(S_i)} \| \otimes_{i=1}^k \Pi^{(S_i)}) = \sum_{i=1}^k D(P^{(S_i)} \| \Pi^{(S_i)}).
    \end{align*}
    We now take $\mathbf{S} \preceq \mathbf{T}$ and $e\in \llbracket d \rrbracket \backslash T_i$. By \eqref{eq:ort_submod}, we aim to show that $\Delta_{e, i} f(\mathbf{S}) \leq \Delta_{e, i} f(\mathbf{T})$, which indeed holds since
    \begin{align*}
        \Delta_{e, i} f(\mathbf{S}) &= D(P^{(S_i \cup \{e\})} \| \Pi^{(S_i \cup \{e\})}) - D(P^{(S_i)} \| \Pi^{(S_i)})\\
        &\leq D(P^{(T_i \cup \{e\})} \| \Pi^{(T_i \cup \{e\})}) - D(P^{(T_i)} \| \Pi^{(T_i)}) = \Delta_{e, i} f(\mathbf{T}),
    \end{align*}
    because $S \mapsto D(P^{(S)} \| \Pi^{(S)})$ is supermodular (see Theorem~\ref{lem:dist2stationarity}). 
\end{proof}
\textcolor{black}{We now confine ourselves into} the following optimization problem \begin{align*}
    \max_{\mathbf{S} \preceq \mathbf{V};~|\mathrm{supp}(\mathbf{S}_m)| \leq m} -f(\mathbf{S}),
\end{align*}
where $f$ is orthant supermodular \textcolor{black}{when $\pi$ is of product form in view of Lemma \ref{lem:orthantsuperproduct}.} \textcolor{black}{In other words, we are looking for a coordinate partition $\mathbf{S}$ so that the factorized transition kernel $\otimes_{i=1}^k P^{(S_i)}$ is ``closest'' to the tensorized equilibrium of the factorization $\otimes_{i=1}^k \Pi^{(S_i)}$ given the total cardinality constraint $m$.}

{\color{black}As a consequence of Theorem~\ref{thm:monotonize-k}, the following function $g$ is
\begin{align}\label{eq:g_gen_dist2stat}
    g(\mathbf{S}) = -f(\mathbf{S}) - \beta + \sum_{i=1}^k \sum_{e\in S_i} (D(P^{(V_i)}\| P^{(e)} \otimes P^{(V_i \backslash e)}) + D(P^{(e)} \| \Pi^{(e)})).
\end{align}
monotonically non-decreasing and $k$-submodular. We further take $\beta = 0$, and choose $c$ to be
\begin{align}\label{eq:c_gen_dist2stat}
    c(\mathbf{S}) = \sum_{i=1}^k \sum_{e\in S_i} (D(P^{(V_i)}\| P^{(e)} \otimes P^{(V_i \backslash e)}) + D(P^{(e)} \| \Pi^{(e)})).
\end{align}
which is non-negative and modular.} Since $-f(\mathbf{S}) = g(\mathbf{S}) - c(\mathbf{S})$, we apply Algorithm~\ref{alg:generalized_distorted_greedy} to obtain an approximate maximizer along with a lower bound.

\begin{corollary}\label{cor:gen_distgrdy_dist2stat}
    Let $P\in \mathcal{L}(\mathcal{X})$ be $\pi$-stationary with $\pi$ be of product form and $\mathbf{V} \in (k+1)^{\llbracket d \rrbracket}$. In Algorithm~\ref{alg:generalized_distorted_greedy}, we take $g$ as in~\eqref{eq:g_gen_dist2stat}, $c$ as in~\eqref{eq:c_gen_dist2stat}, and $\mathbf{OPT} = \argmax_{\mathbf{S} \preceq \mathbf{V};~|\mathrm{supp}(\mathbf{S}_m)| \leq m} -f(\mathbf{S})$. Then Theorem~\ref{thm:lb_gen_distgrdy} gives the following lower bound
    \begin{align*}
        -f(\mathbf{S}_m) \geq (1 - e^{-1}) g(\mathbf{OPT}) - c(\mathbf{OPT}),
    \end{align*}
    where $\mathbf{S}_m = (S_{m, 1}, \ldots, S_{m, k})$ is the output of Algorithm~\ref{alg:generalized_distorted_greedy}.
\end{corollary}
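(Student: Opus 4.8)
The plan is to recognize $-f$ as the difference $g-c$ of a monotonically non-decreasing $k$-submodular function and a non-negative modular function over $\{\mathbf{S}\preceq\mathbf{V}\}$, and then invoke Theorem~\ref{thm:lb_gen_distgrdy} verbatim. First I would record two ingredients already available: the tensorization identity $f(\mathbf{S}) = \sum_{i=1}^k D(P^{(S_i)}\|\Pi^{(S_i)})$ (from the chain rule of KL divergence, exactly as in the orthant supermodularity lemma preceding this corollary), and the pointwise formula $D(P^{(S)}\|\Pi^{(S)}) = H(\pi^{(S)}) - H(P^{(S)})$ established in the proof of Lemma~\ref{lem:dist2stationarity} using $\pi$-stationarity. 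Since $\pi$ is of product form, $H(\pi^{(S)}) = \sum_{e\in S}H(\pi^{(e)})$ is modular; this is precisely where the product-form hypothesis enters, and it is what makes $f$ orthant supermodular, equivalently $-f$ orthant submodular.

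Next I would apply Theorem~\ref{thm:monotonize-k} to the orthant submodular function $h:=-f$ with $\beta=0$ and the given $\mathbf{V}$, obtaining a monotonically non-decreasing $k$-submodular function on $\{\mathbf{S}\preceq\mathbf{V}\}$. The only genuine computation is to check that the correction term produced by that theorem, namely $h(V_1,\dots,V_i\backslash\{e\},\dots,V_k) - h(V_1,\dots,V_i,\dots,V_k)$, equals $D(P^{(V_i)}\|P^{(e)}\otimes P^{(V_i\backslash e)}) + D(P^{(e)}\|\Pi^{(e)})$: expanding via the pointwise formula above, the summands with index $j\neq i$ cancel, the two $\pi$-entropies telescope to $H(\pi^{(e)})$ by product form, and the leftover $H(P^{(V_i\backslash e)}) + H(P^{(e)}) - H(P^{(V_i)})$ is exactly the distance to $(\{e\},V_i\backslash\{e\})$-factorizability of $P^{(V_i)}$. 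Hence the function delivered by Theorem~\ref{thm:monotonize-k} is precisely the $g$ of \eqref{eq:g_gen_dist2stat}, and $c := g - (-f) = g + f$ of \eqref{eq:c_gen_dist2stat} is a finite sum of singleton contributions (hence modular), each a KL divergence and therefore non-negative, with $c(\emptyset)=0$. This gives $-f(\mathbf{S}) = g(\mathbf{S}) - c(\mathbf{S})$ on $\{\mathbf{S}\preceq\mathbf{V}\}$.

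Finally I would verify the remaining hypotheses of Theorem~\ref{thm:lb_gen_distgrdy}: $g(\emptyset) = -f(\emptyset) = -\sum_{i=1}^k D(P^{(\emptyset)}\|\Pi^{(\emptyset)}) = 0 \geq 0$, $c(\emptyset) = 0$, and $c$ non-negative, all noted above, together with the monotone $k$-submodularity of $g$ from the previous step. Applying Theorem~\ref{thm:lb_gen_distgrdy} to the run of Algorithm~\ref{alg:generalized_distorted_greedy} with these $g$ and $c$ yields $g(\mathbf{S}_m) - c(\mathbf{S}_m) \geq (1-e^{-1})g(\mathbf{OPT}) - c(\mathbf{OPT})$, and substituting $g(\mathbf{S}_m) - c(\mathbf{S}_m) = -f(\mathbf{S}_m)$ gives the claim. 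I do not expect a substantive obstacle; the one step that needs care is the algebraic identification of the correction term with the stated $g$ and $c$, where the product-form assumption on $\pi$ is indispensable — without it $H(\pi^{(S)})$ is not modular, and neither the supermodularity of $f$ nor the clean closed form of $g$ and $c$ would survive.
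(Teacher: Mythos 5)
Your proposal is correct and follows essentially the same route as the paper: write $-f(\mathbf{S}) = g(\mathbf{S}) - c(\mathbf{S})$ using the KL tensorization, the identity $D(P^{(S)}\|\Pi^{(S)}) = H(\pi^{(S)}) - H(P^{(S)})$, and Theorem~\ref{thm:monotonize-k} applied to the orthant submodular $-f$ with $\beta = 0$, then invoke Theorem~\ref{thm:lb_gen_distgrdy}. Your explicit check that the correction term equals $D(P^{(V_i)}\|P^{(e)}\otimes P^{(V_i\backslash e)}) + D(P^{(e)}\|\Pi^{(e)})$ (using the product form of $\pi$) is exactly the identification the paper leaves implicit in deriving \eqref{eq:g_gen_dist2stat} and \eqref{eq:c_gen_dist2stat}.
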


\subsection{$k$-supermodular minimization of distance to stationarity of tensorized keep-$V_i\backslash S_i$-in matrices $D(\otimes_{i=1}^k P^{(V_i \backslash S_i)} \| \otimes_{i=1}^k \Pi^{(V_i \backslash S_i)})$} \label{sec:gen_dist2stat_c}

For given $\mathbf{V} \in (k+1)^{\llbracket d \rrbracket}$, we \textcolor{black}{concentrate on} the following map:
\begin{align}
    \{\mathbf{S} \in (k+1)^{\llbracket d \rrbracket};~\mathbf{S} \preceq \mathbf{V} \} \ni \mathbf{S} &=(S_1, \ldots, S_k) \mapsto D(\otimes_{i=1}^k P^{(V_i \backslash S_i)} \| \otimes_{i=1}^k \Pi^{(V_i \backslash S_i)}). \label{eq:map_gen_dist2sta_c}
\end{align}

\textcolor{black}{The auxiliary result of this subsection looks into $k$-supermodularity of the above map.}
\begin{lemma}\label{lem:ksupermodulardist2stat}
    The map~\eqref{eq:map_gen_dist2sta_c} is monotonically non-increasing and $k$-supermodular if $P$ is $\pi$-stationary where $\pi$ is of product form.
\end{lemma}
\begin{proof}
    By the chain rule or tensorization property of KL divergence (see Theorem 2.15 and 2.16 of \cite{Polyanskiy_Wu_2025}), we see that $$D(\otimes_{i=1}^k P^{(V_i \backslash S_i)} \| \otimes_{i=1}^k \Pi^{(V_i \backslash S_i)}) = \sum_{i=1}^k D(P^{(V_i \backslash S_i)} \| \Pi^{(V_i \backslash S_i)}),$$ \textcolor{black}{can be expressed as a sum of $k$ monotonically non-increasing (recall the partition lemma (Theorem \ref{thm:part_lem})) and supermodular functions, and hence $k$-supermodular making use of Lemma \ref{lem:sum_nonincr_supmod}.}
\end{proof}
\textcolor{black}{In this subsection, we focus on} the following optimization problem 
\begin{align}\label{eq:ksupermoddist2statg}
    \max_{\mathbf{S} \preceq \mathbf{V};~ |\mathrm{supp}(\mathbf{S})| \leq m} g(\mathbf{S}) = -D(\otimes_{i=1}^k P^{(V_i \backslash S_i)} \| \otimes_{i=1}^k \Pi^{(V_i \backslash S_i)}).
\end{align}
\textcolor{black}{With a product form $\pi$ and by Lemma \ref{lem:ksupermodulardist2stat}, the map~\eqref{eq:map_gen_dist2sta_c} is monotonically non-increasing and $k$-supermodular, and hence $g$ is monotonically non-decreasing and $k$-submodular.} We \textcolor{black}{then} apply Algorithm~\ref{alg:generalized_distorted_greedy} to obtain a $(1 - e^{-1})$-approximation guarantee.

\begin{corollary}
    Let $P \in \mathcal{L}(\mathcal{X})$ be $\pi$-stationary with product form $\pi$ and $\mathbf{V} \in (k+1)^{\llbracket d \rrbracket}$. We take $g$ as in \eqref{eq:ksupermoddist2statg}, $c=0$ and $\mathbf{OPT} = \argmax_{\mathbf{S} \preceq \mathbf{V};~ |\mathrm{supp}(\mathbf{S})| \leq m} g(\mathbf{S})$. According to Theorem~\ref{thm:lb_gen_distgrdy}, we have the following lower bound for Algorithm~\ref{alg:generalized_distorted_greedy}
    \begin{align*}
        g(\mathbf{S}_m) \geq (1 - e^{-1})g(\mathbf{OPT}),
    \end{align*}
    where $\mathbf{S}_m = (S_{m,1}, \ldots, S_{m, k})$ is the output of Algorithm~\ref{alg:generalized_distorted_greedy}.
\end{corollary}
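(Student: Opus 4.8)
The claim is a direct corollary of the generalized distorted greedy guarantee, Theorem~\ref{thm:lb_gen_distgrdy}, applied with the stated $g$ and with $c \equiv 0$; so the plan is simply to verify that $g$ and $c$ meet that theorem's hypotheses and then read off the conclusion. \textbf{Step 1 (structural hypotheses on $g$).} I would first invoke the theorem established just above this corollary: when $\pi$ is of product form, the map \eqref{eq:map_gen_dist2sta_c}, namely $\mathbf{S} \mapsto D(\otimes_{i=1}^k P^{(V_i \backslash S_i)} \| \otimes_{i=1}^k \Pi^{(V_i \backslash S_i)}) = \sum_{i=1}^k D(P^{(V_i \backslash S_i)} \| \Pi^{(V_i \backslash S_i)})$, is monotonically non-increasing and $k$-supermodular. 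Negating, $g$ as in \eqref{eq:ksupermoddist2statg} is monotonically non-decreasing and $k$-submodular on $\{\mathbf{S} \preceq \mathbf{V}\}$; in particular, by Theorem~\ref{thm:k-submod}, it is orthant submodular. These are exactly the structural properties of the objective required by Algorithm~\ref{alg:generalized_distorted_greedy} and its analysis.

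\textbf{Step 2 (bookkeeping hypotheses).} The modular function $c \equiv 0$ is trivially non-negative, modular, and satisfies $c(\emptyset) = 0$. The only hypothesis of Theorem~\ref{thm:lb_gen_distgrdy} that is not immediate is $g(\emptyset) \geq 0$; since $g(\emptyset) = -\sum_{i=1}^k D(P^{(V_i)} \| \Pi^{(V_i)})$ may be negative, one adds the constant $\alpha := \sum_{i=1}^k D(P^{(V_i)} \| \Pi^{(V_i)}) \geq 0$ to $g$, which leaves the run of Algorithm~\ref{alg:generalized_distorted_greedy}, its output $\mathbf{S}_m$, the maximizer $\mathbf{OPT}$, monotonicity, and $k$-submodularity all unchanged — the same harmless normalization implicit in the analogous corollary of Section~\ref{sec:gen_dist2indp_c}. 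With this in place, all hypotheses of Theorem~\ref{thm:lb_gen_distgrdy} hold.

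\textbf{Step 3 (conclusion).} Applying Theorem~\ref{thm:lb_gen_distgrdy} to this $g$ and $c \equiv 0$ yields $g(\mathbf{S}_m) - c(\mathbf{S}_m) \geq (1 - e^{-1}) g(\mathbf{OPT}) - c(\mathbf{OPT})$, and since $c$ vanishes identically this is precisely $g(\mathbf{S}_m) \geq (1 - e^{-1}) g(\mathbf{OPT})$, as claimed. I do not expect any genuine obstacle here: all the real work has already been done, both in proving the preceding theorem (monotonicity and $k$-supermodularity of \eqref{eq:map_gen_dist2sta_c}, via Lemma~\ref{lem:sum_nonincr_supmod} and the results of Section~\ref{sec:dist2stat_c}) and in proving Theorem~\ref{thm:lb_gen_distgrdy} itself; what remains is merely to match hypotheses, the only delicate point being the $g(\emptyset)$ normalization noted in Step~2.
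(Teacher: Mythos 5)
Your Steps 1 and 3 coincide with the paper's (implicit) proof: the corollary is read off directly from Theorem~\ref{thm:lb_gen_distgrdy}, once the theorem immediately preceding it gives that the map \eqref{eq:map_gen_dist2sta_c} is monotonically non-increasing and $k$-supermodular (so that $g$ in \eqref{eq:ksupermoddist2statg} is monotonically non-decreasing and $k$-submodular), with $c\equiv 0$ trivially non-negative and modular. Up to that point you are doing exactly what the paper does.

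The problem is Step 2, and it is a genuine one. Adding the constant $\alpha:=\sum_{i=1}^k D(P^{(V_i)}\,\|\,\Pi^{(V_i)})$ does leave the run of Algorithm~\ref{alg:generalized_distorted_greedy}, its output $\mathbf{S}_m$, the maximizer $\mathbf{OPT}$, monotonicity and $k$-submodularity unchanged, but the multiplicative guarantee of Theorem~\ref{thm:lb_gen_distgrdy} is \emph{not} invariant under such a shift: applying the theorem to $\tilde g:=g+\alpha$ gives $g(\mathbf{S}_m)+\alpha\geq(1-e^{-1})\bigl(g(\mathbf{OPT})+\alpha\bigr)$, i.e.\ $g(\mathbf{S}_m)\geq(1-e^{-1})g(\mathbf{OPT})-e^{-1}\alpha$, which is strictly weaker than the corollary's bound whenever $\alpha>0$ --- and $\alpha>0$ is the generic case, since $g(\emptyset)=-\sum_{i=1}^k D(P^{(V_i)}\,\|\,\Pi^{(V_i)})$ vanishes only when each $P^{(V_i)}$ is already at stationarity. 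So your Step 3 claim that this ``is precisely $g(\mathbf{S}_m)\geq(1-e^{-1})g(\mathbf{OPT})$'' is a non sequitur if the theorem is applied to $\tilde g$, while if it is applied to the original $g$ the hypothesis $g(\emptyset)\geq 0$ is exactly what is in question. The paper performs no normalization at all: it applies Theorem~\ref{thm:lb_gen_distgrdy} directly (just as in the analogous corollary of Section~\ref{sec:gen_dist2indp_c}), implicitly treating the hypothesis $g(\emptyset)\geq 0$ as satisfied. You have correctly isolated the delicate hypothesis, but your repair does not deliver the stated inequality; the honest options are to restrict to the case $g(\emptyset)\geq 0$, to record the weaker shifted bound, or to flag the hypothesis as an implicit assumption --- not to assert that the shift is harmless for the conclusion.
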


In the special case where $k=1$ and $\mathbf{V} = \llbracket d \rrbracket$, the above Corollary reduces to the $(1-e^{-1})$-approximation guarantee as in Section~\ref{sec:dist2stat_c}.

\section{Distance to factorizability over a fixed set $D(P^{(W \cup S)} \| P^{(W)} \otimes P^{(S)})$} \label{sec:dist2fact_fixed}

We fix a set $W\subseteq \llbracket d \rrbracket$ and \textcolor{black}{focus on} the following function:
\begin{align}\label{eq:map_dist2fact_fixedset}
    \{S \subseteq \llbracket d \rrbracket;~ S \cap W = \emptyset \}\ni S \mapsto f(S) = D(P^{(W \cup S)} \| P^{(W)} \otimes P^{(S)}).
\end{align}
We \textcolor{black}{now turn to} the following optimization problem with cardinality constraint \textcolor{black}{given by}
\begin{align*}
    \max_{S \subseteq \llbracket d \rrbracket;~ S \cap W = \emptyset;~|S| = m} f(S).
\end{align*}

We pick $S, T \subseteq \{S \subseteq \llbracket d \rrbracket;~ S \cap W = \emptyset \}$ with $S\subseteq T$ and compute that
\begin{align*}
    f(S) - f(T) = [H(P^{(T \cup W)}) - H(P^{(T)})] - [H(P^{(S \cup W)}) - H(P^{(S)})] \leq 0,
\end{align*}
where the inequality follows from the property that $S \mapsto H(P^{(S)})$ is submodular (see Theorem~\ref{thm:submod_mc}). Therefore $f$ is monotonically non-decreasing. Also, $f(\emptyset) = D(P^{(W)} \| P^{(W)} \otimes P^{(\emptyset)}) = 0$. As such, we can apply Algorithm~\ref{alg:batch_greedy} (see Theorem~\ref{thm:lb_batch_greedy}) with a lower bound.

\begin{corollary}\label{cor:dist2fact_fixed}
    Let $P \in \mathcal{L}(\mathcal{X})$ be $\pi$-stationary, $W \subseteq \llbracket d \rrbracket$, and $U = \llbracket d \rrbracket \backslash W$ be the ground set. Let $f$ be \eqref{eq:map_dist2fact_fixedset} which is a monotonically non-decreasing set function with $f(\emptyset) = 0$. Algorithm~\ref{alg:batch_greedy} yields the following lower bound
    \begin{align*}
        f(S_l) \geq \left(1 - \prod_{i=1}^l \left(1 - \frac{q_i \cdot \eta_{U, q_i} \cdot \gamma_{U, m}}{m}\right)\right) \max_{S \subseteq \llbracket d \rrbracket;~ S \cap W = \emptyset;~|S| = m} f(S),
    \end{align*}
    where $S_l$ is the output set of Algorithm~\ref{alg:batch_greedy}, \textcolor{black}{and $\eta$ and $\gamma$ denote the supermodularity ratio and submodularity ratio respectively \cite{jagalur2021batch}.}
\end{corollary}

\section{Numerical experiments} \label{sec:numericalexp}

We conduct a case study to evaluate the numerical performance of the submodular optimization algorithms on the information-theoretic properties of multivariate Markov chains. \textcolor{black}{We evaluate multivariate Markov chains associated with the Curie--Weiss model and the Bernoulli--Laplace level model. Both models come from probability, statistical physics and Markov chain Monte Carlo (MCMC) literature. They serve as established toy models in the analysis of multivariate Markov chains, see e.g. \cite{faulkner2024sampling}.} The code used in our numerical experiments is available at: \href{https://github.com/zheyuanlai/SubmodOptMC/}{\texttt{https://github.com/zheyuanlai/SubmodOptMC}}.

\subsection{Experiment settings - Curie--Weiss model}\label{sec:curie_weiss}

We consider a discrete $d$-dimensional hypercube state space given by
\begin{align*}
    \mathcal{X} = \{-1,+1\}^d.
\end{align*}
Let the Hamiltonian function be that of the Curie--Weiss model (see Chapter 13 of~\cite{bovier2016metastability}) on $\mathcal{X}$ with interaction coefficients $\frac{1}{2^{|j-i|}}$ and external magnetic field $h \in \mathbb{R}$, that is, for $x = (x^1, \ldots, x^d) \in \mathcal{X}$,
\begin{align*}
    \mathcal{H}(x) = - \sum_{i=1}^d \sum_{j=1}^d \dfrac{1}{2^{|j-i|}} x^i x^j - h \sum_{i=1}^d x^i.
\end{align*}
We consider a Glauber dynamics with a simple random walk proposal targeting the Gibbs distribution at temperature $T \geq 0$. At each step we pick uniformly at random one of the $d$ coordinates and flip it to the opposite sign, along with an acceptance-rejection filter, that is,
\begin{align*}
    P(x, y) = \begin{cases}
\dfrac{1}{d} e^{-\frac{1}{T} (\mathcal{H}(y) - \mathcal{H}(x))_+}, & \text{if } y = (x^1,x^2,\ldots,-x^i,\ldots,x^d), i \in \llbracket d \rrbracket, \\
1 - \sum_{y;~y \neq x} P(x, y), & \text{if } x = y, \\
0, & \text{otherwise},
\end{cases}
\end{align*}
where for $m \in \mathbb{R}$ we denote $m_+ := \max\{m,0\}$ the non-negative part of $m$. The stationary distribution of $P$ is the Gibbs distribution at temperature $T$ given by
\begin{align*}
    \pi(x) = \dfrac{e^{-\frac{1}{T} \mathcal{H}(x)}}{\sum_{z \in \mathcal{X}} e^{-\frac{1}{T} \mathcal{H}(z)}}.
\end{align*}
\paragraph{Parameters.} We aim to generate a $10$-dimensional Markov chain from the Curie--Weiss model. We choose $d=10$, and hence the state space is of product form with $\mathcal{X} = \{-1, +1\}^{10}$. The choices of Hamiltonian function $\mathcal H (x)$, transition matrix $P$, and the stationary distribution $\pi (x)$ are detailed in Section~\ref{sec:curie_weiss}, and we choose $T = 10$ as the temperature, $h = 1$ as the external magnetic field. For the numerical experiments of the generalized distorted greedy algorithm, we choose $\mathbf{V} = (V_1, V_2, V_3)$ where $V_1 = \{1, 2, 3, 4\}$, $V_2 = \{5, 6, 7\}$, and $V_3 = \{8, 9, 10\}$. 

\subsection{Experiment settings - Bernoulli--Laplace level model}
\label{sec:BLLM}
We consider a $d$-dimensional Bernoulli–Laplace level model as described in Section 4.2 of~\cite{MR2521887}. Let \begin{align*}
    \mathcal{X} = \{x = (x^1, \ldots, x^{d}) \in \mathbb{N}_0^d;~ x^1 + \ldots + x^{d} = N\}
\end{align*}
be the state space, where $x^i$ can be interpreted as the number of ``particles'' of type $i$ out of the total number $N$. The stationary distribution of such Markov chain, $\pi$, is given by the multivariate hypergeometric distribution described in Lemma 4.18 of~\cite{MR2521887}. Concretely, we have
\begin{align}\label{eq:sta_dist}
    \pi(x) = \frac{\prod_{i=1}^{d} {l_i \choose x^i}}{{l_1 + \ldots + l_{d} \choose N}}, \quad x \in \mathcal{X}, 
\end{align}
for some fixed parameters $l_1, \ldots, l_{d} \in \mathbb N$ representing the total number of ``particles'' of type $i$.

Following the spectral decomposition for reversible Markov chains (see Section 2.1 of \cite{MR2521887} for background), the transition matrix $P$ is written as:
\begin{align}\label{eq:spec_decomp}
    P(x, y) = \sum_{n=0}^N \beta_n \phi_n(x) \phi_n (y) \pi(y),
\end{align}
where $\beta_n$ are the eigenvalues and $\phi_n(x)$ is the eigenfunction.

From Definition 4.15 of~\cite{MR2521887}, in the Bernoulli--Laplace level model, $s$ is the swap size parameter satisfying $$0 \leq s \leq \min \left\{N, \sum_{i=1}^d l_i - N\right\},$$ where we consider $\sum_{i=1}^d l_i > N$. From Theorem 4.19 of~\cite{MR2521887}, the eigenvalues for the Bernoulli--Laplace level model are given by
\begin{align*}
    \beta_n = \sum_{k=0}^n {n \choose k} \frac{(N - s)_{[n-k]} s_{[k]}}{N_{[n-k]} \left(\sum_{i=1}^{d} l_i - N\right)_{[k]}}, \quad 0\leq n \leq N,
\end{align*}
where $a_{[k]} = a (a-1) \cdots (a - k + 1)$, and we apply the convention that $a_{[0]} = 1$. 

In this case, we choose the eigenfunction as \begin{align*}
    \phi_n (x) = \left\{\mathbf{Q_n}\left(x; N, -\sum_{i=1}^{d} l_i\right)\right\}_{|\mathbf{n}| = n},
\end{align*}
where $\mathbf{Q_n}$ are the multivariate Hahn polynomials for the hypergeometric distribution as defined in Proposition 2.3 of~\cite{MR2521887}.

\paragraph{Parameters.} We aim to generate a $10$-dimensional Markov chain from the Bernoulli--Laplace level model. We consider the special case where $s=1$ and choose $d=11$, $l_1 = \ldots = l_{10} = 1$, $N = 10$, and $l_{11} = 10$. We let $x^{11} = N - \sum_{i=1}^{10} x^i$, and hence the state space is of product form with $\mathcal{X} = \{0, 1\}^{10}$.

The transition probabilities follow the dynamics as in~\eqref{eq:spec_decomp}, where particles hop between coordinates while respecting capacity constraints, and the stationary distribution $\pi$ is computed as in~\eqref{eq:sta_dist}. For the numerical experiments of the generalized distorted greedy algorithm, we choose $\mathbf{V} = (V_1, V_2, V_3)$ where $V_1 = \{1, 2, 3, 4\}$, $V_2 = \{5, 6, 7\}$, and $V_3 = \{8, 9, 10\}$. 

{\color{black}
\subsection{An experiment on improving sampling of MCMC}\label{subsec:MCMC}
We give a small-scale MCMC experiment illustrating how the subset selection algorithm is able to identify optimal subset of a multivariate Markov chain under a cardinality constraint that is closest to stationarity, and how this chosen subset can be used to improve sampling of $\pi$. In this section, we consider the Curie--Weiss model from Section~\ref{sec:curie_weiss} with a dimension $d=8$ so that $\mathcal{X} = \{-1,+1\}^8$. We choose the cardinality constraint as $m=7$. In other words, we want to choose a subset containing $7$ coordinates that is closest to equilibrium among all feasible subsets. This is equivalent to the following optimization problem:
\[
    i^\star = \argmin_{i \in \llbracket 8 \rrbracket} D(P^{(-i)} \| \Pi^{(-i)}),
\]
The algorithm from Section~\ref{sec:dist2stat_c} gives $i^\star = 4$, hence the resulting subset of coordinates $\{1,2,3,5,6,7,8\}$ is the closest to equilibrium.

We proceed to compare the mixing rate of different choices of subsets. For each $i \in \llbracket 8\rrbracket$, we calculate its worst-case total variation distance between $(P^{(-i)})^n$ and $\Pi^{(-i)}$:
\[
    d_{\mathrm{TV}}^{(i)}(n) := \max_{x^{(-i)} \in \mathcal{X}^{(-i)}} \frac{1}{2} \sum_{y^{(-i)} \in \mathcal{X}^{(-i)}} \left|(P^{(-i)})^n(x^{(-i)}, y^{(-i)}) - \Pi^{(-i)}(x^{(-i)}, y^{(-i)})\right|.
\]

We plot $1000\times d_{\mathrm{TV}}^{(i)}(n)$ as a function of $n$ for all $i\in \llbracket 8 \rrbracket$, with the vertical axis representing the scaled total variation distance and the horizontal axis representing $n$, see Figure~\ref{fig:exp_mcmc}. The curve for $i=4$ decays the fastest among the curves of all coordinates, indicating that $P^{(-4)}$ mixes fastest, which justifies the subset-selection criterion and algorithm introduced in Section \ref{sec:dist2stat_c}.

\begin{figure}[H]
    \centering
    \includegraphics[width=0.85\linewidth]{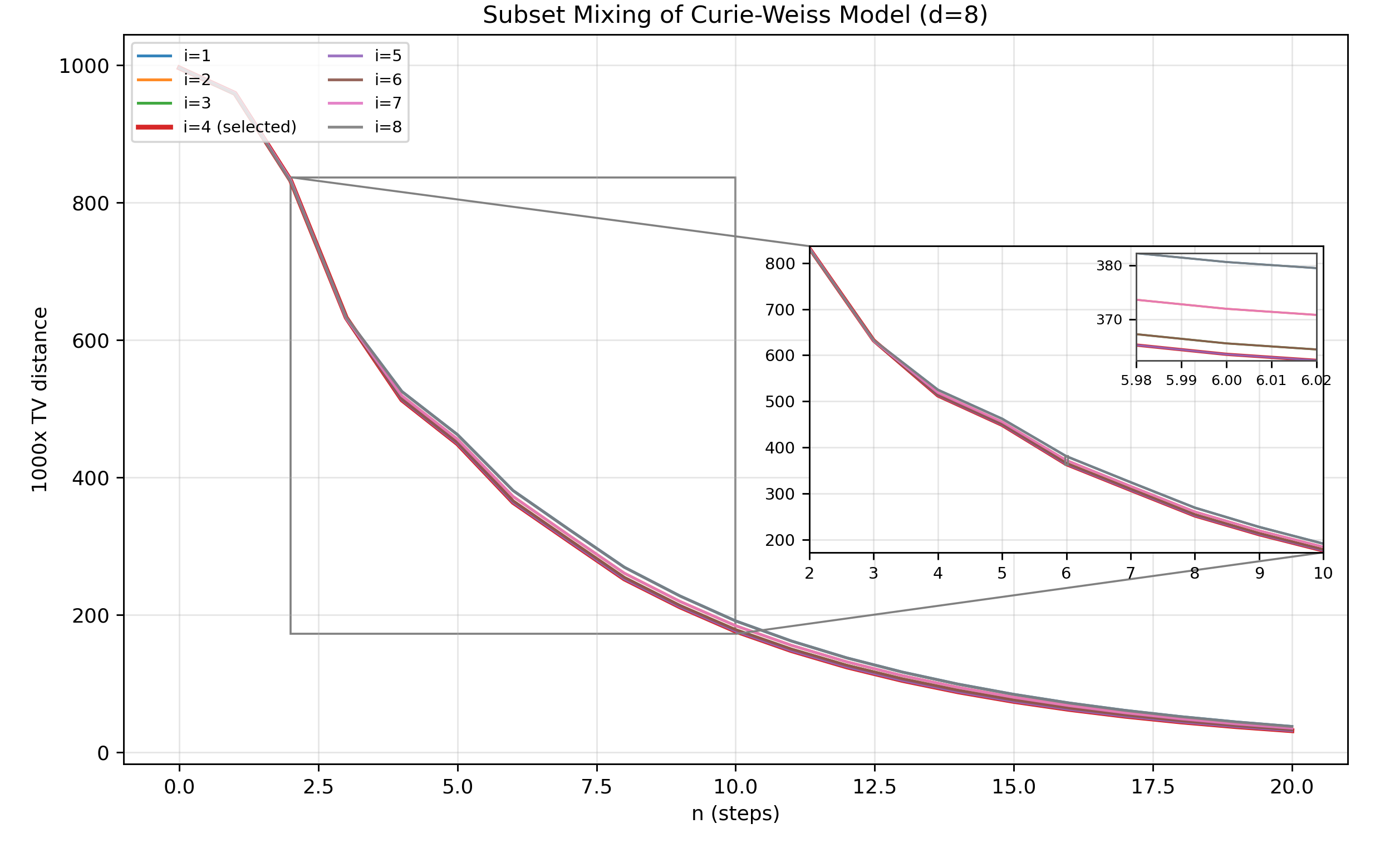}
    \caption{The leave-one-out mixing analysis of the Curie-Weiss model ($d=8$).}
    \label{fig:exp_mcmc}
\end{figure}

One heuristic argument from the above numerical results is that the coordinate 4 is ``far-off'' from equilibrium, so we separate it out and run an individual sampler $P^{(4)}$ to equilibrate it. Thus, instead of considering $P$, we run the tensorized transition matrix $P^{(-4)} \otimes P^{(4)}$. Note that this latter matrix targets $\pi^{(-4)} \otimes \pi^{(4)}$, which is different from $\pi$.

To validate the heuristics, we proceed to compare the convergence of the factorized design $P^{(-4)} \otimes P^{(4)}$ with the original chain $P$. From Figure~\ref{fig:exp_mcmc}, we see that the TV distance between $(P^{(-4)})^n$ and $\Pi^{(-4)}$ reaches below $0.2$ after 10 steps. We proceed to compute the worst-case total variation distance of both designs in comparison to stationarity, that is,
\[\max_{x \in \mathcal{X}}\|P^{10}(x,\cdot) - \pi\|_\mathrm{TV} = 0.22, \qquad \max_{x \in \mathcal{X}}\|(P^{(-4)})^{10} \otimes (P^{(4)})^{10}(x,\cdot) - \pi \|_\mathrm{TV} = 0.19,\]
where we define $\|\mu - \nu \|_\mathrm{TV} := \frac{1}{2} \sum_{x\in \mathcal{X}} | \mu(x) - \nu(x) |$ for $\mu, \nu$ being two probability distributions on $\mathcal{X}$. Thus, there is an improvement of approximately $\frac{0.03}{0.22} \approx 14\%$. We also run simulations for $1000$ times on both designs of transition kernels across four different seeds, and plot the empirical cumulative distribution function (CDF) of the results of both models in Figure~\ref{fig:exp_mcmc_cdf}. In addition, we compute the total variation distance between the empirical distributions $\hat\pi$ formed by the samples and the true target distribution $\pi$. From Figure~\ref{fig:exp_mcmc_cdf}, it can be visually inferred that the empirical CDF of the factorized design $(P^{(-4)})^{10} \otimes (P^{(4)})^{10}$ is ``closer'' to the stationary distribution $\pi$ than that of the baseline $P^{10}$, while the total variation distance of the factorized design $(P^{(-4)})^{10} \otimes (P^{(4)})^{10}$ is smaller than that of $P^{10}$ across all seeds except seed $200$ where these two are still comparable.

These results from numerical validation collectively showcase that the factorized design $P^{(-4)} \otimes P^{(4)}$ can be used as a faster substitute of $P$ for approximate sampling of $\pi$.

\begin{figure}[H]
    \centering
    \includegraphics[width=1\linewidth]{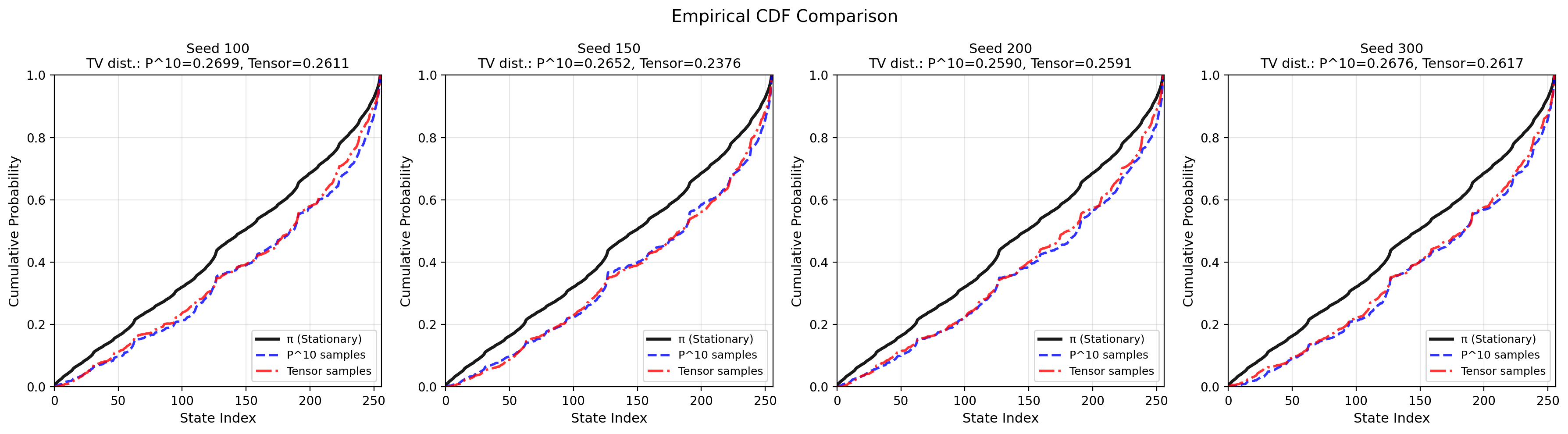}
    \caption{Empirical CDF comparison of MCMC simulation. Here, ``Tensor samples" refer to the 1000 samples generated by the transition matrix $(P^{(-4)})^{10} \otimes (P^{(4)})^{10}$.}
    \label{fig:exp_mcmc_cdf}
\end{figure}

\paragraph{Limitations.} This heuristic argument can be applied to design factorizable transition matrix when the dimension of the original Markov chain is low (e.g. $d=8$ in the above simulation). When the dimension $d$ is large, we may not be able to explicitly compute the target $\pi$, and hence submodular optimization algorithms may not be readily applicable. We also may not be able to simulate the relevant keep-$S$-in and leave-$S$-out matrices due to computational constraints when the dimension is large.
}

\subsection{Experiment results of Section~\ref{sec:entropy_rate}}

In this section, we report the numerical experiment results related to Section~\ref{sec:entropy_rate}, which contains the performance of the heuristic greedy algorithm (see Section 4 of~\cite{MR503866}), the distorted greedy algorithm (see Corollary~\ref{cor:entropyrate}), and the generalized distorted greedy algorithm (see Corollary~\ref{cor:gen_entropyrate}) on the Bernoulli--Laplace level model (see Section~\ref{sec:BLLM}) and the Curie--Weiss model (see Section~\ref{sec:curie_weiss}). For each experiment, we conduct submodular optimization with cardinality constraint $m$, with $m$ ranging from 1 to 10. 

\begin{table}[H]
\centering
\begin{tabular}{|c|c|c|c|c|}
\hline
  & \multicolumn{2}{c|}{\textbf{Greedy}}
  & \multicolumn{2}{c|}{\textbf{Distorted Greedy}} \\
\hline
$m$
 & Subset $S_m$
 & $H\bigl(P^{(S_m)}\bigr)$
 & Subset $S_m$
 & $H\bigl(P^{(S_m)}\bigr)$ \\
\hline
1  & $\{10\}$ & 0.46094 & $\{10\}$ & 0.46094 \\
2  & $\{3,\,10\}$ & 0.83616 & $\{1,\,10\}$ & 0.83573 \\
3  & $\{1,\,3,\,10\}$ & 1.17940 & $\{1,\,2,\,5\}$ & 1.18116 \\
4  & $\{1,\,2,\,3,\,10\}$ & 1.49461 & $\{1,\,2,\,3,\,5\}$ & 1.50706 \\
5  & $\{1,\,2,\,3,\,4,\,10\}$ & 1.77855 & $\{1,\,2,\,3,\,4,\,5\}$ & 1.80193 \\
6  & $\{1,\,2,\,3,\,4,\,5,\,10\}$ & 2.03516 & $\{1,\,2,\,3,\,4,\,5,\,6\}$ & 2.06105 \\
7  & $\{1,\,2,\,3,\,4,\,5,\,6,\,10\}$ & 2.25729 & $\{1,\,2,\,3,\,4,\,5,\,6,\,7\}$ & 2.28328 \\
8  & $\{1,\,2,\,3,\,4,\,5,\,6,\,7,\,10\}$ & 2.43498 & $\{1,\,2,\,3,\,4,\,5,\,6,\,7,\,8\}$ & 2.45453 \\
9  & $\{1,\,2,\,3,\,4,\,5,\,6,\,7,\,8,\,10\}$ & 2.51897 & $\{1,\,2,\,3,\,4,\,5,\,6,\,7,\,8,\,10\}$ & 2.51897 \\
10 & $\{1,\,2,\,3,\,4,\,5,\,6,\,7,\,8,\,10\}$ & 2.51897 & $\{1,\,2,\,3,\,4,\,5,\,6,\,7,\,8,\,10\}$ & 2.51897 \\
\hline
\end{tabular}
\caption{Comparison of the greedy algorithm and the distorted greedy algorithm. Entropy rate of the full chain of the Bernoulli--Laplace level model is $H(P) = 1.96068$.}
\label{tab:BLLM_greedy_comparison_entropyrate}
\end{table}

\begin{figure}[H]
\centering
\begin{subfigure}[b]{0.46\linewidth}
    \centering
    \includegraphics[width=\linewidth]{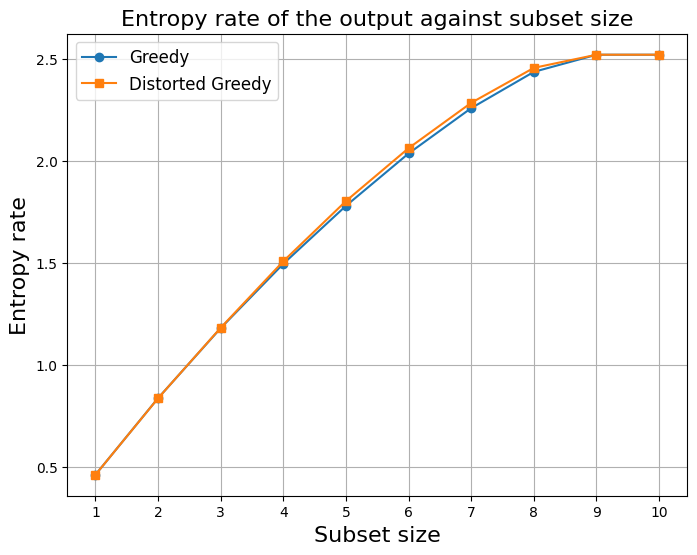}
    \caption{Greedy and Algorithm~\ref{alg:distorted_greedy}}
    \label{fig:BLLM_greedy_entropyrate}
\end{subfigure}
\begin{subfigure}[b]{0.48\linewidth}
    \centering
    \includegraphics[width=\linewidth]{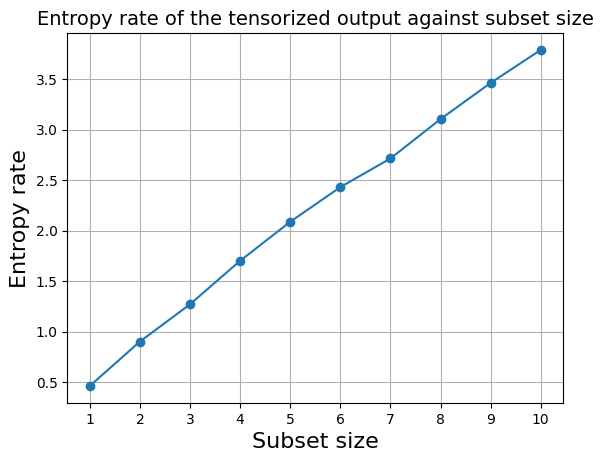}
    \caption{Algorithm~\ref{alg:generalized_distorted_greedy}}
    \label{fig:BLLM_gen_distgrdy_entropyrate}
\end{subfigure}
\caption{Entropy rate against subset size for the three algorithms (B--L model).}
\label{fig:BLLM_entropyrate}
\end{figure}

\begin{table}[H]
\centering
\begin{tabular}{|c|c|c|c|c|}
\hline
\textbf{Cardinality \(m\)} 
  & \textbf{Subset \(S_{m,1}\)} 
  & \textbf{Subset \(S_{m,2}\)} 
  & \textbf{Subset \(S_{m,3}\)} 
  & \textbf{\(H(\otimes_{i=1}^3 P^{(S_{m,i})})\)} \\
\hline
1  & \(\emptyset\) & \(\emptyset\) & \(\{10\}\) & 0.46094 \\
2  & \(\emptyset\) & \(\{7\}\)      & \(\{10\}\) & 0.90046 \\
3  & \(\emptyset\) & \(\{7\}\)      & \(\{8,\,9\}\) & 1.26966 \\
4  & \(\{4\}\)      & \(\{7\}\)      & \(\{8,\,9\}\) & 1.70072 \\
5  & \(\{4\}\)      & \(\{5,\,7\}\)  & \(\{8,\,9\}\) & 2.08692 \\
6  & \(\{4\}\)      & \(\{5,\,6,\,7\}\) & \(\{8,\,9\}\) & 2.43035 \\
7  & \(\{4\}\)      & \(\{5,\,6,\,7\}\) & \(\{8,\,9,\,10\}\) & 2.71405 \\
8  & \(\{3,\,4\}\)  & \(\{5,\,6,\,7\}\) & \(\{8,\,9,\,10\}\) & 3.10451 \\
9  & \(\{1,\,2,\,4\}\)  & \(\{5,\,6,\,7\}\) & \(\{8,\,9,\,10\}\) & 3.46267 \\
10 & \(\{1,\,2,\,3,\,4\}\)  & \(\{5,\,6,\,7\}\) & \(\{8,\,9,\,10\}\) & 3.78968 \\
\hline
\end{tabular}
\caption{Performance evaluation of the generalized distorted greedy algorithm. Entropy rate of the full chain of the Bernoulli--Laplace level model is $H(P) = 1.96068$.}
\label{tab:BLLM_gen_distgrdy_entropyrate}
\end{table}

For the Bernoulli--Laplace level model, Table \ref{tab:BLLM_greedy_comparison_entropyrate} and Figure \ref{fig:BLLM_greedy_entropyrate} show the entropy rates of the output of the greedy algorithm and the distorted greedy algorithm (Algorithm~\ref{alg:distorted_greedy}); Table \ref{tab:BLLM_gen_distgrdy_entropyrate} and Figure \ref{fig:BLLM_gen_distgrdy_entropyrate} show the entropy rates of the tensorized output of the generalized distorted greedy algorithm (Algorithm \ref{alg:generalized_distorted_greedy}).

\begin{table}[H]
\centering
\begin{tabular}{|c|c|c|c|c|}
\hline
 & \multicolumn{2}{c|}{\textbf{Greedy}}
 & \multicolumn{2}{c|}{\textbf{Distorted Greedy}} \\
\hline
$m$
 & Subset $S_m$
 & $H\bigl(P^{(S_m)}\bigr)$
 & Subset $S_m$
 & $H\bigl(P^{(S_m)}\bigr)$ \\
\hline
1  & $\{1\}$                        & 0.29085 & $\{1\}$                        & 0.29085 \\
2  & $\{1,\,10\}$                   & 0.57371 & $\{1,\,10\}$                   & 0.57371 \\
3  & $\{1,\,9,\,10\}$               & 0.83933 & $\{1,\,9,\,10\}$               & 0.83933 \\
4  & $\{1,\,2,\,9,\,10\}$           & 1.09570 & $\{1,\,2,\,9,\,10\}$           & 1.09570 \\
5  & $\{1,\,2,\,6,\,9,\,10\}$       & 1.33953 & $\{1,\,2,\,6,\,9,\,10\}$       & 1.33953 \\
6  & $\{1,\,2,\,4,\,6,\,9,\,10\}$   & 1.57098 & $\{1,\,2,\,4,\,6,\,9,\,10\}$   & 1.57098 \\
7  & $\{1,\,2,\,4,\,6,\,8,\,9,\,10\}$ & 1.78757 & $\{1,\,2,\,4,\,6,\,8,\,9,\,10\}$ & 1.78757 \\
8  & $\{1,\,2,\,3,\,4,\,6,\,8,\,9,\,10\}$ & 1.98500 
   & $\{1,\,2,\,3,\,4,\,6,\,7,\,9,\,10\}$ & 1.98458 \\
9  & $\{1,\,2,\,3,\,4,\,6,\,7,\,8,\,9,\,10\}$ & 2.15793
   & $\{1,\,2,\,3,\,4,\,6,\,7,\,8,\,9,\,10\}$ & 2.15793 \\
10 & $\{1,\,2,\,3,\,4,\,5,\,6,\,7,\,8,\,9,\,10\}$ & 2.29109
   & $\{1,\,2,\,3,\,4,\,5,\,6,\,7,\,8,\,9,\,10\}$ & 2.29109 \\
\hline
\end{tabular}
\caption{Comparison of the greedy algorithm and the distorted greedy algorithm. Entropy rate of the full chain of the Curie--Weiss model is $H(P) = 2.29109$.}
\label{tab:CW_greedy_comparison_entropyrate}
\end{table}

\begin{figure}[H]
\centering
\begin{subfigure}[b]{0.48\linewidth}
    \centering
    \includegraphics[width=\linewidth]{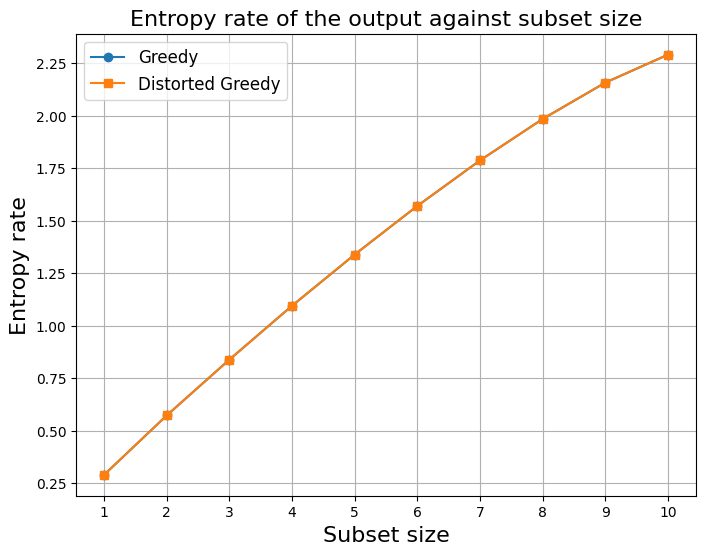}
    \caption{Greedy and Algorithm~\ref{alg:distorted_greedy}}
    \label{fig:CW_greedy_entropyrate}
\end{subfigure}
\begin{subfigure}[b]{0.49\linewidth}
    \centering
    \includegraphics[width=\linewidth]{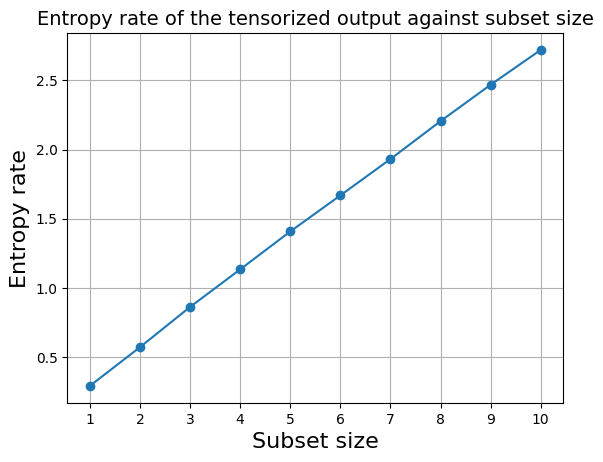}
    \caption{Algorithm~\ref{alg:generalized_distorted_greedy}}
    \label{fig:CW_gen_distgrdy_entropyrate}
\end{subfigure}
\caption{Entropy rate against subset size for the three algorithms (C--W model).}
\label{fig:CW_entropyrate}
\end{figure}

\begin{table}[H]
\centering
\begin{tabular}{|c|c|c|c|c|}
\hline
\textbf{Cardinality \(m\)} 
  & \textbf{Subset \(S_{m,1}\)} 
  & \textbf{Subset \(S_{m,2}\)} 
  & \textbf{Subset \(S_{m,3}\)} 
  & \textbf{\(H(\otimes_{i=1}^3 P^{(S_{m,i})})\)} \\
\hline
1  & \(\{1\}\)           & \(\emptyset\)      & \(\emptyset\)       & 0.29085 \\
2  & \(\{1\}\)           & \(\{7\}\)            & \(\emptyset\)       & 0.57067 \\
3  & \(\{1\}\)           & \(\{7\}\)            & \(\{10\}\)           & 0.86152 \\
4  & \(\{1\}\)           & \(\{5,7\}\)          & \(\{10\}\)           & 1.13316 \\
5  & \(\{1\}\)           & \(\{5,7\}\)          & \(\{9,10\}\)         & 1.40732 \\
6  & \(\{1\}\)           & \(\{5,6,7\}\)        & \(\{9,10\}\)         & 1.66816 \\
7  & \(\{1\}\)           & \(\{5,6,7\}\)        & \(\{8,9,10\}\)       & 1.93090 \\
8  & \(\{1,2\}\)         & \(\{5,6,7\}\)        & \(\{8,9,10\}\)       & 2.20505 \\
9  & \(\{1,2,4\}\)       & \(\{5,6,7\}\)        & \(\{8,9,10\}\)       & 2.46832 \\
10 & \(\{1,2,3,4\}\)     & \(\{5,6,7\}\)        & \(\{8,9,10\}\)       & 2.72011 \\
\hline
\end{tabular}
\caption{Performance evaluation of the generalized distorted greedy algorithm. Entropy rate of the full chain of the Curie--Weiss model is $H(P) = 2.29109$.}
\label{tab:CW_gen_distgrdy_entropyrate}
\end{table}

For the Curie--Weiss model, Table \ref{tab:CW_greedy_comparison_entropyrate} and Figure \ref{fig:CW_greedy_entropyrate} show the entropy rates of the output of the greedy algorithm and the distorted greedy algorithm (Algorithm~\ref{alg:distorted_greedy}); Table \ref{tab:CW_gen_distgrdy_entropyrate} and Figure \ref{fig:CW_gen_distgrdy_entropyrate} show the entropy rates of the tensorized output of the generalized distorted greedy algorithm (Algorithm \ref{alg:generalized_distorted_greedy}).

Notably, in Table~\ref{tab:BLLM_greedy_comparison_entropyrate} and Figure~\ref{fig:BLLM_greedy_entropyrate}, the distorted greedy algorithm outperforms the heuristic greedy algorithm when the cardinality constraint equals to $m = 3, 4, 5, 6, 7, 8$. This is because, in the distorted greedy algorithm, the distortion term $(1 - \frac{1}{m})^{m - (i+1)}$ at each step is different with different cardinality constraint $m$, which results in possibly better or different results than the heuristic greedy algorithm. However, the distorted greedy algorithm does not necessarily select better subset than the heuristic greedy algorithm, see the example of $m=2$ in Table~\ref{tab:BLLM_greedy_comparison_entropyrate} and $m=8$ in Table~\ref{tab:CW_greedy_comparison_entropyrate}.

\subsection{Experiment results of Section~\ref{sec:dist2fact}}
We report the numerical experiment results related to Section~\ref{sec:dist2fact}, which contains the performance of the heuristic greedy algorithm (Section 4 of \cite{MR503866}), the distorted greedy algorithm (Algorithm~\ref{alg:distorted_greedy}), and the generalized distorted greedy algorithm (Algorithm~\ref{alg:generalized_distorted_greedy}) on the Curie--Weiss model as detailed in Section~\ref{sec:curie_weiss}. 

\begin{table}[H]
\centering
\begin{tabular}{|c|c|c|c|c|}
\hline
 & \multicolumn{2}{c|}{\textbf{Greedy}}
 & \multicolumn{2}{c|}{\textbf{Distorted Greedy}} \\
\hline
$m$
 & Subset $S_m$
 & $D\left(P \| P^{(S_m)} \otimes P^{(-S_m)}\right)$
 & Subset $S_m$
 & $D\left(P \| P^{(S_m)} \otimes P^{(-S_m)}\right)$ \\
\hline
1 & $\{6\}$ & 0.14837 & $\{6\}$ & 0.14837 \\
2 & $\{2,6\}$ & 0.24497 & $\{3,10\}$ & 0.24496 \\
3 & $\{2,6,9\}$ & 0.30927 & $\{3,7\}$ & 0.24525 \\
4 & $\{2,5,6,9\}$ & 0.34590 & $\{2,7,10\}$ & 0.30905 \\
5 & $\{2,3,5,6,9\}$ & 0.35758 & $\{2,3,6,10\}$ & 0.34590 \\
\hline
\end{tabular}
\caption{Comparison of the greedy algorithm and the distorted greedy algorithm \textcolor{black}{(C--W model)}.}
\label{tab:CW_greedy_comparison_dist2fact}
\end{table}

\begin{table}[H]
\centering
\begin{tabular}{|c|c|c|c|c|}
\hline
\(m\)
  & \textbf{Subset \(S_{m,1}\)} 
  & \textbf{Subset \(S_{m,2}\)} 
  & \textbf{Subset \(S_{m,3}\)} 
  & $D\left(P \| \left(\otimes_{i=1}^3 P^{(S_{m, i})}\right) \otimes P^{(-\cup_{i=1}^3 S_{m, i})}\right)$ \\
\hline
1  & \(\emptyset\)    & \(\{6\}\)          & \(\emptyset\)      & 0.14836 \\
2  & \(\emptyset\)    & \(\{7\}\)          & \(\{8\}\)          & 0.25388 \\
3  & \(\{4\}\)       & \(\{7\}\)          & \(\{8\}\)          & 0.33529 \\
4  & \(\{4\}\)       & \(\{5,7\}\)        & \(\{8\}\)          & 0.39056 \\
5  & \(\{2,4\}\)     & \(\{5,7\}\)        & \(\{8\}\)          & 0.43104 \\
6  & \(\{2,4\}\)     & \(\{5,7\}\)        & \(\{8,10\}\)       & 0.45978 \\
7  & \(\{2,4\}\)     & \(\{5,6,7\}\)      & \(\{8,10\}\)       & 0.46887 \\
8  & \(\{2,4\}\)     & \(\{5,6,7\}\)      & \(\{8,10\}\)       & 0.46887 \\
9  & \(\{2,4\}\)     & \(\{5,6,7\}\)      & \(\{8,10\}\)       & 0.46887 \\
10 & \(\{2,4\}\)     & \(\{5,6,7\}\)      & \(\{8,10\}\)       & 0.46887 \\
\hline
\end{tabular}
\caption{Performance evaluation of the generalized distorted greedy algorithm \textcolor{black}{(C--W model)}.}
\label{tab:CW_gen_distgrdy_dist2fact}
\end{table}

\begin{figure}[H]
\centering
\begin{subfigure}[b]{0.46\linewidth}
    \centering
    \includegraphics[width=\linewidth]{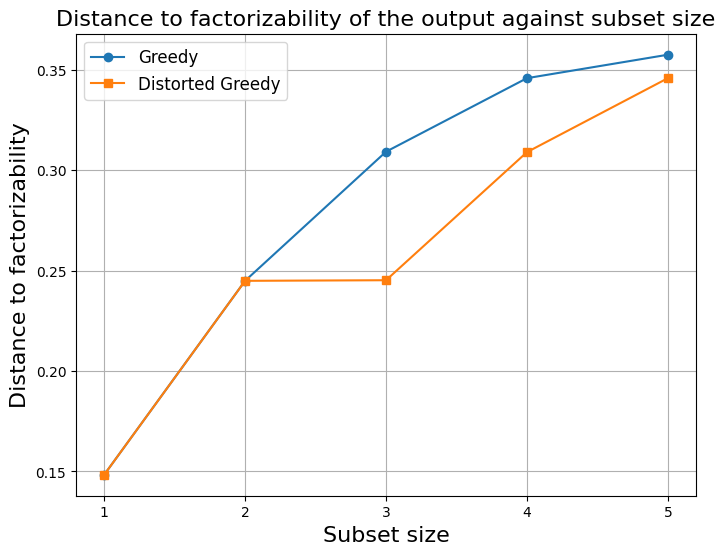}
    \caption{Greedy and Algorithm~\ref{alg:distorted_greedy}}
    \label{fig:CW_greedy_distfact}
\end{subfigure}
\begin{subfigure}[b]{0.48\linewidth}
    \centering
    \includegraphics[width=\linewidth]{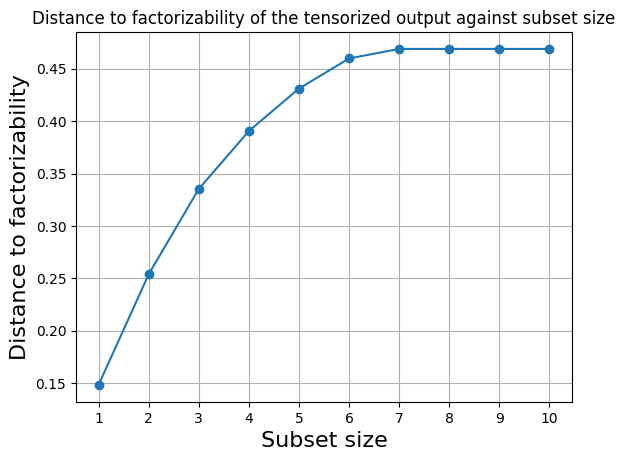}
    \caption{Algorithm~\ref{alg:generalized_distorted_greedy}}
    \label{fig:CW_gen_distgrdy_distfact}
\end{subfigure}
\caption{Distance to factorizability against subset size for the three algorithms \textcolor{black}{(C--W model)}.}
\label{fig:CW_dist2fact}
\end{figure}

For the experiments related to heuristic greedy and distorted greedy algorithms, since the map $S \mapsto D(P \| P^{(S)} \otimes P^{(-S)})$ is symmetric, we conduct submodular maximization with cardinality constraint $m$, with $m$ only ranging from 1 to 5. The results are shown on Table~\ref{tab:CW_greedy_comparison_dist2fact} and Figure~\ref{fig:CW_greedy_distfact}. 
\textcolor{black}{We observe that the distorted greedy algorithm does not consistently outperform the heuristic greedy algorithm (e.g., at $m=2$). This is expected in the context of non-monotone submodular maximization: while the distorted greedy algorithm provides a worst-case approximation guarantee (Corollary~\ref{cor:dist2fact}), the heuristic greedy algorithm may still exploit local gradients effectively in specific instances.}
We also conduct the generalized distorted greedy algorithm on the Curie--Weiss model as detailed in Corollary~\ref{cor:gen_dist2fact} with cardinality constraint $m$ ranging from 1 to 10, and the results are shown on Table~\ref{tab:CW_gen_distgrdy_dist2fact} and Figure~\ref{fig:CW_gen_distgrdy_distfact}.

We conduct similar numerical experiments \textcolor{black}{of the three algorithms} on the Bernoulli--Laplace level model (see Section~\ref{sec:BLLM}). Among all cardinality constraints, the greedy algorithm and the distorted greedy algorithm output $S_m =\{10\}$, and the generalized distorted greedy algorithm outputs $S_{m, 1} = S_{m, 2} = \emptyset$, $S_{m, 3} = \{10\}$. The reason behind it is that for a 10-dimensional Markov chain \textcolor{black}{associated with the Bernoulli--Laplace level model}, the coordinate 10 is ``far'' from other coordinates, \textcolor{black}{which leads to the sparse result.}

\subsection{Experiment results of Section~\ref{sec:dist2indp}}

We report the numerical experiment results related to Section~\ref{sec:dist2indp}, which contains the performance of the heuristic greedy algorithm (see Section 4 of~\cite{MR503866}), the distorted greedy algorithm (see Corollary~\ref{cor:dist2indp}), and the generalized distorted greedy algorithm (see Corollary~\ref{cor:gen_dist2indp}) on the Bernoulli--Laplace level model (see Section~\ref{sec:BLLM}) and the Curie--Weiss model (see Section~\ref{sec:curie_weiss}). For each experiment, we conduct supermodular minimization with different cardinality constraint $m$'s.

For the Bernoulli--Laplace level model, Table \ref{tab:BLLM_greedy_comparison_dist2indp} and Figure \ref{fig:BLLM_greedy_dist2indp} show the distance to independence of the outputs.
\textcolor{black}{We observe that the distorted greedy algorithm consistently identifies subsets with a smaller distance to independence (lower $\mathbb{I}(P^{(S)})$) compared to the standard greedy algorithm.}
Table \ref{tab:BLLM_gen_distgrdy_dist2indp} and Figure \ref{fig:BLLM_gen_distgrdy_dist2indp} show the distance to independence of the tensorized outputs of the generalized distorted greedy algorithm (Algorithm \ref{alg:generalized_distorted_greedy}) \textcolor{black}{subject to varying cardinality constraints}.

\begin{table}[H]
\centering
\begin{tabular}{|c|c|c|c|c|}
\hline
 & \multicolumn{2}{c|}{\textbf{Greedy}} & \multicolumn{2}{c|}{\textbf{Distorted Greedy}} \\
\hline
$m$ & Subset $S_m$ & $\mathbb{I}(P^{(S_m)})$ & Subset $S_m$ & $\mathbb{I}(P^{(S_m)})$ \\
\hline
2  & $\{1,\,10\}$                   & 0.05140 & $\{1,\,2\}$                   & 0.03406 \\
3  & $\{1,\,2,\,10\}$               & 0.13505 & $\{1,\,2,\,3\}$               & 0.10318 \\
4  & $\{1,\,2,\,3,\,10\}$           & 0.24989 & $\{1,\,2,\,3,\,4\}$           & 0.20793 \\
5  & $\{1,\,2,\,3,\,4,\,10\}$       & 0.39701 & $\{1,\,2,\,3,\,4,\,5\}$       & 0.34753 \\
6  & $\{1,\,2,\,3,\,4,\,5,\,10\}$   & 0.57523 & $\{1,\,2,\,3,\,4,\,5,\,6\}$   & 0.52441 \\
7  & $\{1,\,2,\,3,\,4,\,5,\,6,\,10\}$ & 0.78911 & $\{1,\,2,\,3,\,4,\,5,\,6,\,7\}$ & 0.74171 \\
8  & $\{1,\,2,\,3,\,4,\,5,\,6,\,7,\,10\}$ & 1.05094 & $\{1,\,2,\,3,\,4,\,5,\,6,\,7,\,8\}$ & 1.01576 \\
9  & $\{1,\,2,\,3,\,4,\,5,\,6,\,7,\,8,\,10\}$ & 1.41226 & $\{1,\,2,\,3,\,4,\,5,\,6,\,7,\,8,\,10\}$ & 1.41226 \\
10 & $\{1,\,2,\,3,\,4,\,5,\,6,\,7,\,8,\,9,\,10\}$ & 2.41825 & $\{1,\,2,\,3,\,4,\,5,\,6,\,7,\,8,\,9,\,10\}$ & 2.41825 \\
\hline
\end{tabular}
\caption{Comparison of the greedy algorithm and the distorted greedy algorithm (B--L model).}
\label{tab:BLLM_greedy_comparison_dist2indp}
\end{table}

\begin{table}[H]
\centering
\begin{tabular}{|c|c|c|c|c|}
\hline
\(m\) & \textbf{Subset \(S_{m,1}\)} & \textbf{Subset \(S_{m,2}\)} & \textbf{Subset \(S_{m,3}\)} & $\mathbb I \left(\otimes_{i=1}^3 P^{(S_{m, i})} \right)$ \\
\hline
4  & \(\{1,\,2\}\)       & \(\{5\}\)       & \(\{8\}\)       & 0.03406 \\
5  & \(\{1,\,2\}\)       & \(\{5,\,6\}\)   & \(\{8\}\)       & 0.07999 \\
6  & \(\{1,\,2\}\)       & \(\{5,\,6\}\)   & \(\{8,\,9\}\)   & 0.14286 \\
7  & \(\{1,\,2,\,3\}\)   & \(\{5,\,6\}\)   & \(\{8,\,9\}\)   & 0.21199 \\
8  & \(\{1,\,2,\,3\}\)   & \(\{5,\,6,\,7\}\) & \(\{8,\,9\}\)   & 0.30727 \\
9  & \(\{1,\,2,\,3,\,4\}\) & \(\{5,\,6,\,7\}\) & \(\{8,\,9\}\)   & 0.41202 \\
10 & \(\{1,\,2,\,3,\,4\}\) & \(\{5,\,6,\,7\}\) & \(\{8,\,9,\,10\}\) & 0.58925 \\
\hline
\end{tabular}
\caption{Performance evaluation of the generalized distorted greedy algorithm (B--L model).}
\label{tab:BLLM_gen_distgrdy_dist2indp}
\end{table}

\begin{figure}[H]
\centering
\begin{subfigure}[b]{0.46\linewidth}
    \centering
    \includegraphics[width=\linewidth]{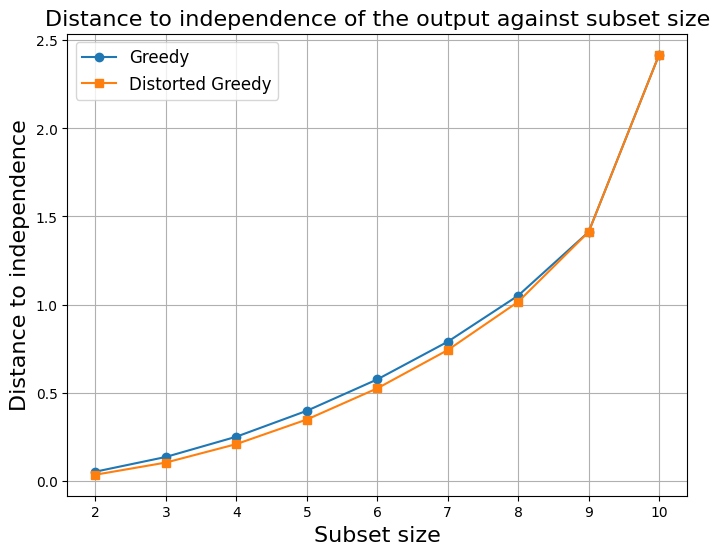}
    \caption{Greedy and Algorithm~\ref{alg:distorted_greedy}}
    \label{fig:BLLM_greedy_dist2indp}
\end{subfigure}
\begin{subfigure}[b]{0.52\linewidth}
    \centering
    \includegraphics[width=\linewidth]{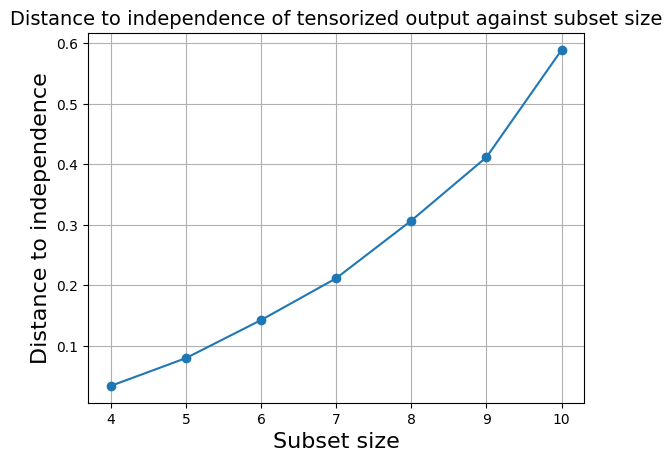}
    \caption{Algorithm~\ref{alg:generalized_distorted_greedy}}
    \label{fig:BLLM_gen_distgrdy_dist2indp}
\end{subfigure}
\caption{Distance to independence against subset size for the three algorithms (B--L model).}
\label{fig:BLLM_dist2indp}
\end{figure}

\begin{table}[H]
\centering
\begin{tabular}{|c|c|c|c|c|}
\hline
 & \multicolumn{2}{c|}{\textbf{Greedy}} & \multicolumn{2}{c|}{\textbf{Distorted Greedy}} \\
\hline
$m$ & Subset $S_m$ & $\mathbb{I}(P^{(S_m)})$ & Subset $S_m$ & $\mathbb{I}(P^{(S_m)})$ \\
\hline
2  & $\{4,\,10\}$             & 0.00757 & $\{1,\,7\}$             & 0.00757 \\
3  & $\{4,\,7,\,10\}$         & 0.02350 & $\{1,\,6,\,10\}$         & 0.02398 \\
4  & $\{2,\,4,\,7,\,10\}$      & 0.04889 & $\{1,\,5,\,7,\,10\}$      & 0.04961 \\
5  & $\{2,\,4,\,6,\,7,\,10\}$   & 0.08592 & $\{1,\,3,\,5,\,7,\,10\}$  & 0.08591 \\
6  & $\{2,\,4,\,6,\,7,\,8,\,10\}$ & 0.13555 & $\{1,\,3,\,5,\,7,\,8,\,10\}$ & 0.13533 \\
7  & $\{2,\,3,\,4,\,6,\,7,\,8,\,10\}$ & 0.19989 & $\{1,\,3,\,4,\,5,\,7,\,8,\,10\}$ & 0.20017 \\
8  & $\{2,\,3,\,4,\,5,\,6,\,7,\,8,\,10\}$ & 0.28356 & $\{1,\,3,\,4,\,5,\,6,\,7,\,8,\,10\}$ & 0.28399 \\
9  & $\{2,\,3,\,4,\,5,\,6,\,7,\,8,\,9,\,10\}$ & 0.39102 & $\{1,\,3,\,4,\,5,\,6,\,7,\,8,\,9,\,10\}$ & 0.39191 \\
10 & $\{1,\,2,\,3,\,4,\,5,\,6,\,7,\,8,\,9,\,10\}$ & 0.53813 & $\{1,\,2,\,3,\,4,\,5,\,6,\,7,\,8,\,9,\,10\}$ & 0.53813 \\
\hline
\end{tabular}
\caption{Comparison of the greedy algorithm and the distorted greedy algorithm (C--W model).}
\label{tab:CW_greedy_comparison_dist2indp}
\end{table}

\begin{table}[H]
\centering
\begin{tabular}{|c|c|c|c|c|}
\hline
\(m\) & \textbf{Subset \(S_{m,1}\)} & \textbf{Subset \(S_{m,2}\)} & \textbf{Subset \(S_{m,3}\)} & $\mathbb I \left(\otimes_{i=1}^3 P^{(S_{m, i})} \right)$ \\
\hline
4  & \(\{1\}\)           & \(\{5,\,7\}\)       & \(\{8\}\)         & 0.00778 \\
5  & \(\{1,\,4\}\)       & \(\{5,\,7\}\)       & \(\{8\}\)         & 0.01556 \\
6  & \(\{1,\,4\}\)       & \(\{5,\,7\}\)       & \(\{8,\,10\}\)     & 0.02376 \\
7  & \(\{1,\,3,\,4\}\)   & \(\{5,\,7\}\)       & \(\{8,\,10\}\)     & 0.04172 \\
8  & \(\{1,\,3,\,4\}\)   & \(\{5,\,6,\,7\}\)   & \(\{8,\,10\}\)     & 0.06029 \\
9  & \(\{1,\,3,\,4\}\)   & \(\{5,\,6,\,7\}\)   & \(\{8,\,9,\,10\}\)  & 0.07972 \\
10 & \(\{1,\,2,\,3,\,4\}\) & \(\{5,\,6,\,7\}\)   & \(\{8,\,9,\,10\}\)  & 0.10911 \\
\hline
\end{tabular}
\caption{Performance evaluation of the generalized distorted greedy algorithm (C--W model).}
\label{tab:CW_gen_distgrdy_dist2indp}
\end{table}

\begin{figure}[H]
\centering
\begin{subfigure}[b]{0.46\linewidth}
    \centering
    \includegraphics[width=\linewidth]{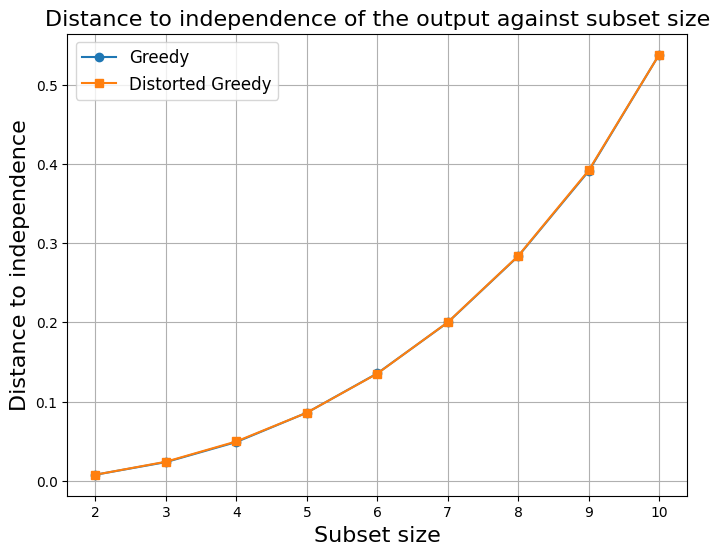}
    \caption{Greedy and Algorithm~\ref{alg:distorted_greedy}}
    \label{fig:CW_greedy_dist2indp}
\end{subfigure}
\begin{subfigure}[b]{0.52\linewidth}
    \centering
    \includegraphics[width=\linewidth]{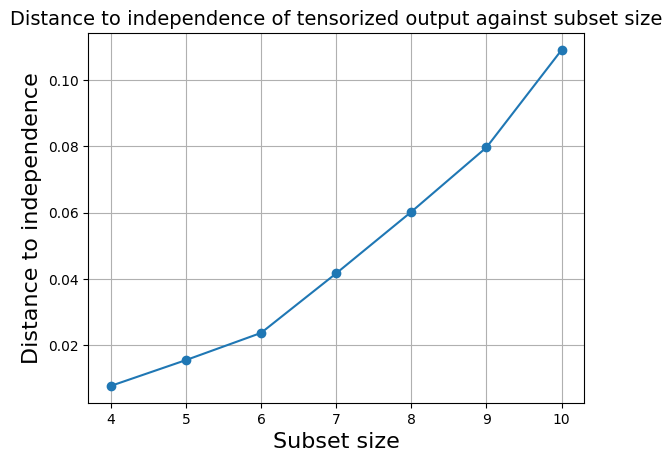}
    \caption{Algorithm~\ref{alg:generalized_distorted_greedy}}
    \label{fig:CW_gen_distgrdy_dist2indp}
\end{subfigure}
\caption{Distance to independence against subset size for the three algorithms (C--W model).}
\label{fig:CW_dist2indp}
\end{figure}

For the Curie--Weiss model, Table \ref{tab:CW_greedy_comparison_dist2indp} and Figure \ref{fig:CW_greedy_dist2indp} show the distance of independence of the outputs of the greedy algorithm and the distorted greedy algorithm (Algorithm~\ref{alg:distorted_greedy}), \textcolor{black}{where the distorted greedy algorithm is slightly worse than the heuristic greedy algorithm.} Table \ref{tab:CW_gen_distgrdy_dist2indp} and Figure \ref{fig:CW_gen_distgrdy_dist2indp} show the distance of independence of the tensorized outputs of the generalized distorted greedy algorithm (Algorithm \ref{alg:generalized_distorted_greedy}).

In addition, we report the numerical experiment results related to the distance to independence of the complement set, as detailed in Section~\ref{sec:dist2indp_c} and Section~\ref{sec:gen_dist2indp_c}. The performance of the greedy algorithm on the two models is shown in Table~\ref{tab:greedy_dist2indp_c} and Figure~\ref{fig:greedy_dist2indp_c}, while the performance of the generalized distorted greedy algorithm can be seen from Table~\ref{tab:gen_distgrdy_dist2indp_c} and Figure~\ref{fig:gen_distgrdy_dist2indp_c}. \textcolor{black}{Note that under the settings of Section~\ref{sec:dist2indp_c}, the heuristic greedy algorithm (Section 4 of \cite{MR503866}) gives $(1 - e^{-1})$-approximation guarantee as the objective function is monotone, and as such we do not conduct experiments using the distorted greedy algorithm.}

\begin{table}[H]
\centering
\begin{tabular}{|c|c|c|c|c|}
\hline
 & \multicolumn{2}{c|}{\textbf{Bernoulli--Laplace}} & \multicolumn{2}{c|}{\textbf{Curie--Weiss}} \\
\hline
\(m\) & \textbf{Subset $S_m$} & $\mathbb{I} (P^{(-S_m)})$ & \textbf{Subset $S_m$} & $\mathbb{I} (P^{(-S_m)})$ \\
\hline
1 & \(\{9\}\) & 1.41226 & \(\{1\}\) & 0.39102 \\
2 & \(\{9,\,10\}\) & 1.01576 & \(\{1,\,10\}\) & 0.28314 \\
3 & \(\{8,\,9,\,10\}\) & 0.74171 & \(\{1,\,5,\,10\}\) & 0.19981 \\
4 & \(\{7,\,8,\,9,\,10\}\) & 0.52441 & \(\{1,\,5,\,7,\,10\}\) & 0.13517 \\
5 & \(\{6,\,7,\,8,\,9,\,10\}\) & 0.34753 & \(\{1,\,3,\,5,\,7,\,10\}\) & 0.08523 \\
6 & \(\{5,\,6,\,7,\,8,\,9,\,10\}\) & 0.20793 & \(\{1,\,3,\,5,\,7,\,8,\,10\}\) & 0.04845 \\
7 & \(\{4,\,5,\,6,\,7,\,8,\,9,\,10\}\) & 0.10318 & \(\{1,\,3,\,4,\,5,\,7,\,8,\,10\}\) & 0.02304 \\
8 & \(\{3,\,4,\,5,\,6,\,7,\,8,\,9,\,10\}\) & 0.03406 & \(\{1,\,3,\,4,\,5,\,7,\,8,\,9,\,10\}\) & 0.00736 \\
\hline
\end{tabular}
\caption{Performance evaluation of greedy algorithm.}
\label{tab:greedy_dist2indp_c}
\end{table}

\begin{table}[H]
\centering
\begin{tabular}{|c|cccc|cccc|}
\hline
 & \multicolumn{4}{c|}{\textbf{Bernoulli--Laplace}} & \multicolumn{4}{c|}{\textbf{Curie--Weiss}} \\
\hline
    \(m\) & \(S_{m,1}\) & \(S_{m,2}\) & \(S_{m,3}\) & $\mathbb{I}\left(\otimes_{i=1}^3 P^{(-S_{m, i})}\right)$ & \(S_{m,1}\) & \(S_{m,2}\) & \(S_{m,3}\) & $\mathbb{I}\left(\otimes_{i=1}^3 P^{(-S_{m, i})}\right)$ \\
\hline
1 & $\emptyset$     & $\emptyset$      & \(\{10\}\)      & 0.41202 & \(\{2\}\)      & $\emptyset$      & $\emptyset$       & 0.07972 \\
2 & \(\{4\}\)     & $\emptyset$      & \(\{10\}\)      & 0.30727 & \(\{2\}\)      & $\emptyset$      & \(\{9\}\)      & 0.06029 \\
3 & \(\{4\}\)     & \(\{7\}\)     & \(\{10\}\)      & 0.21198 & \(\{2\}\)      & \(\{6\}\)     & \(\{9\}\)      & 0.04172 \\
4 & \(\{3,\,4\}\) & \(\{7\}\)     & \(\{10\}\)      & 0.14286 & \(\{2,\,3\}\) & \(\{6\}\)     & \(\{9\}\)      & 0.02376 \\
5 & \(\{3,\,4\}\) & \(\{7\}\)     & \(\{9,\,10\}\) & 0.07999 & \(\{2,\,3\}\) & \(\{6\}\)     & \(\{9,\,10\}\) & 0.01556 \\
6 & \(\{3,\,4\}\) & \(\{5,\,7\}\) & \(\{9,\,10\}\) & 0.03406 & \(\{1,\,2,\,3\}\) & \(\{6\}\) & \(\{9,\,10\}\) & 0.00778 \\
\hline
\end{tabular}
\caption{Performance evaluation of the generalized distorted greedy algorithm.}
\label{tab:gen_distgrdy_dist2indp_c}
\end{table}

\begin{figure}[H]
\centering
\begin{subfigure}[b]{0.49\linewidth}
    \centering
    \includegraphics[width=\linewidth]{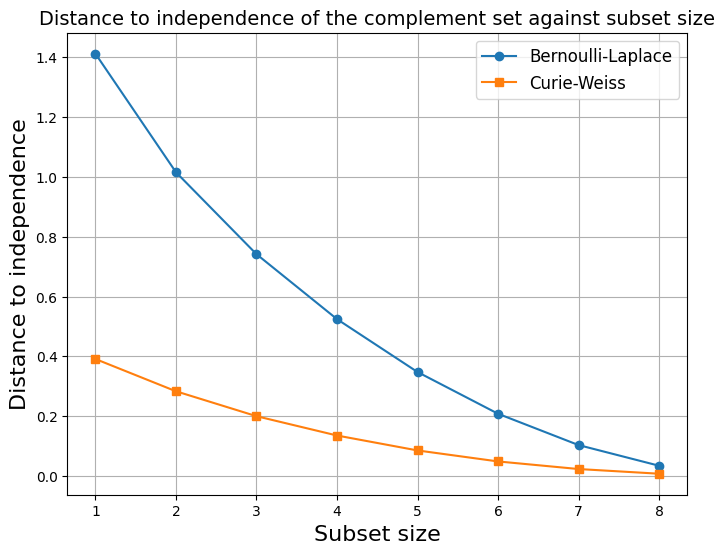}
    \caption{Greedy}
    \label{fig:greedy_dist2indp_c}
\end{subfigure}
\begin{subfigure}[b]{0.49\linewidth}
    \centering
    \includegraphics[width=\linewidth]{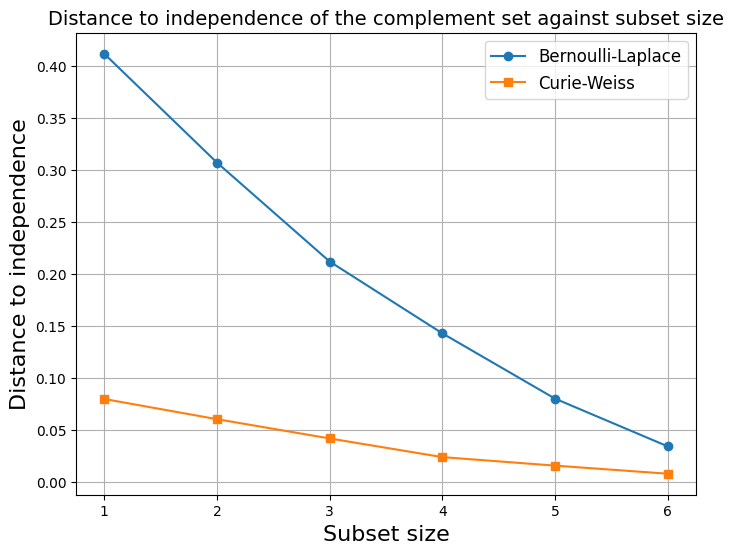}
    \caption{Algorithm~\ref{alg:generalized_distorted_greedy}}
    \label{fig:gen_distgrdy_dist2indp_c}
\end{subfigure}
\caption{Distance to independence of the complement set against subset size.}
\label{fig:dist2indp_c}
\end{figure}

\subsection{Experiment results of Section~\ref{sec:dist2stat}}
We first report the numerical experiment results related to Algorithm~\ref{alg:batch_greedy}. For both the Bernoulli--Laplace level model and the Curie--Weiss model, we consider the following two configurations of the batch greedy algorithm to maximize $D(P^{(S)} \| \Pi^{(S)})$ subject to the cardinality constraint $m$: \begin{itemize}
    \item Approach 1: $l=m$ and $q_i = 1$ for $i\in \llbracket l \rrbracket$;
    \item Approach 2: $l = \lceil \frac{m}{2} \rceil$, $q_i = 2$ for $i \in \llbracket l-1 \rrbracket$; $q_l = 2$ if $m$ is even, $q_l = 1$ if $m$ is odd.
\end{itemize}
In Approach 1, we recover the heuristic greedy algorithm since we are adding one element per iteration. We compare the performance of Approach 1 and Approach 2 for both models, and the results are shown in Table~\ref{tab:bg_BLLM_dist2stat} and Table~\ref{tab:bg_CW_dist2stat}. Although the stationary distribution $\pi$ of the Bernoulli--Laplace level model and the Curie--Weiss model are not of product form, we still apply the heuristic distorted greedy algorithm as in Corollary~\ref{cor:dist2stat}, and the results are summarized in Table~\ref{tab:distgrdy_dist2stat}, \textcolor{black}{and the theoretical guarantee in Corollary \ref{cor:dist2stat} does not apply.} The comparison of these algorithms on the two models is shown in Figure~\ref{fig:bg_dist2stat}, \textcolor{black}{where for both Bernoulli--Laplace and Curie--Weiss models, the distorted greedy algorithm gives the best results, as the resulting coordinate subset chain is closest to the equilibrium under every cardinality constraint $m$, while Approach 1 gives the worst results.}

\begin{table}[H]
\centering
\begin{tabular}{|c|cc|cc|}
\hline
 & \multicolumn{2}{c|}{\textbf{Approach 1}} & \multicolumn{2}{c|}{\textbf{Approach 2}} \\
\hline
\(m\) & \textbf{Subset \(S_l\)} & $D(P^{(S_l)} \| \Pi^{(S_l)})$ & \textbf{Subset \(S_l\)} & $D(P^{(S_l)} \| \Pi^{(S_l)})$ \\
\hline
1  & \(\{1\}\)                          & 0.26693  & \(\{1\}\)                          & 0.26693  \\
2  & \(\{1,2\}\)                        & 0.59421  & \(\{1,2\}\)                        & 0.59421  \\
3  & \(\{1,2,7\}\)                      & 0.98856  & \(\{1,2,7\}\)                      & 0.98856  \\
4  & \(\{1,2,7,10\}\)                   & 1.47330  & \(\{1,2,4,7\}\)                    & 1.46082  \\
5  & \(\{1,2,7,9,10\}\)                 & 2.07889  & \(\{1,2,4,7,10\}\)                 & 2.03226  \\
6  & \(\{1,2,7,8,9,10\}\)               & 2.85834  & \(\{1,2,4,7,9,10\}\)               & 2.73225  \\
7  & \(\{1,2,6,7,8,9,10\}\)             & 3.70196  & \(\{1,2,4,7,8,9,10\}\)             & 3.64286  \\
8  & \(\{1,2,5,6,7,8,9,10\}\)           & 4.69790  & \(\{1,2,4,6,7,8,9,10\}\)           & 4.65621  \\
9  & \(\{1,2,4,5,6,7,8,9,10\}\)         & 5.91911  & \(\{1,2,4,5,6,7,8,9,10\}\)         & 5.91911  \\
10 & \(\{1,2,3,4,5,6,7,8,9,10\}\)       & 7.56130  & \(\{1,2,3,4,5,6,7,8,9,10\}\)       & 7.56130  \\
\hline
\end{tabular}
\caption{Comparison of different configurations of the batch greedy algorithm (B--L model).}
\label{tab:bg_BLLM_dist2stat}
\end{table}

\begin{table}[H]
\centering
\begin{tabular}{|c|cc|cc|}
\hline
 & \multicolumn{2}{c|}{\textbf{Approach 1}} & \multicolumn{2}{c|}{\textbf{Approach 2}} \\
\hline
\(m\) & \textbf{Subset \(S_l\)} & $D(P^{(S_l)} \| \Pi^{(S_l)})$ & \textbf{Subset \(S_l\)} & $D(P^{(S_l)} \| \Pi^{(S_l)})$ \\
\hline
1  & \(\{6\}\)                           & 0.40245  & \(\{6\}\)                           & 0.40245  \\
2  & \(\{3,\,6\}\)                       & 0.81082  & \(\{5,\,6\}\)                       & 0.80739  \\
3  & \(\{3,\,6,\,8\}\)                    & 1.22606  & \(\{5,\,6,\,8\}\)                    & 1.22234  \\
4  & \(\{3,\,4,\,6,\,8\}\)                 & 1.64626  & \(\{3,\,5,\,6,\,8\}\)                 & 1.64615  \\
5  & \(\{3,\,4,\,6,\,8,\,9\}\)              & 2.07613  & \(\{2,\,3,\,5,\,6,\,8\}\)              & 2.07601  \\
6  & \(\{2,\,3,\,4,\,6,\,8,\,9\}\)           & 2.51741  & \(\{2,\,3,\,5,\,6,\,8,\,9\}\)           & 2.51771  \\
7  & \(\{2,\,3,\,4,\,5,\,6,\,8,\,9\}\)        & 2.97051  & \(\{2,\,3,\,4,\,5,\,6,\,8,\,9\}\)        & 2.97051  \\
8  & \(\{1,\,2,\,3,\,4,\,6,\,8,\,9\}\)        & 3.44141  & \(\{2,\,3,\,4,\,5,\,6,\,7,\,8,\,9\}\)    & 3.44085  \\
9  & \(\{1,\,2,\,3,\,4,\,6,\,8,\,9,\,10\}\)    & 3.93647  & \(\{1,\,2,\,3,\,4,\,5,\,6,\,7,\,8,\,9\}\) & 3.93568  \\
10 & \(\{1,\,2,\,3,\,4,\,5,\,6,\,7,\,8,\,9,\,10\}\) & 4.46975  & \(\{1,\,2,\,3,\,4,\,5,\,6,\,7,\,8,\,9,\,10\}\) & 4.46975  \\
\hline
\end{tabular}
\caption{Comparison of different configurations of the batch greedy algorithm (C--W model).}
\label{tab:bg_CW_dist2stat}
\end{table}

\begin{table}[H]
\centering
\begin{tabular}{|c|cc|cc|}
\hline
 & \multicolumn{2}{c|}{\textbf{Bernoulli--Laplace level model}} & \multicolumn{2}{c|}{\textbf{Curie--Weiss model}} \\
\hline
\(m\) & \textbf{Subset \(S_m\)} & $D(P^{(S_m)} \| \Pi^{(S_m)})$ & \textbf{Subset \(S_m\)} & $D(P^{(S_m)} \| \Pi^{(S_m)})$ \\
\hline
1  & \(\{10\}\)                             & 0.23219  & \(\{1\}\)                               & 0.39435  \\
2  & \(\{1,\,10\}\)                         & 0.57719    & \(\{1,\,10\}\)                          & 0.79669   \\
3  & \(\{1,\,2,\,10\}\)                      & 0.98552   & \(\{1,\,2,\,10\}\)                       & 1.20915   \\
4  & \(\{1,\,2,\,3,\,5\}\)                   & 1.45314   & \(\{1,\,2,\,9,\,10\}\)                   & 1.63086    \\
5  & \(\{1,\,2,\,3,\,4,\,5\}\)                & 1.99871   & \(\{1,\,2,\,3,\,9,\,10\}\)               & 2.06307    \\
6  & \(\{1,\,2,\,3,\,4,\,5,\,6\}\)             & 2.63821   & \(\{1,\,2,\,3,\,8,\,9,\,10\}\)            & 2.50704   \\
7  & \(\{1,\,2,\,3,\,4,\,5,\,6,\,7\}\)          & 3.39168    & \(\{1,\,2,\,3,\,4,\,8,\,9,\,10\}\)         & 2.96498    \\
8  & \(\{1,\,2,\,3,\,4,\,5,\,6,\,7,\,8\}\)       & 4.30094    & \(\{1,\,2,\,3,\,4,\,5,\,8,\,9,\,10\}\)      & 3.43971    \\
9  & \(\{1,\,2,\,3,\,4,\,5,\,6,\,7,\,8,\,10\}\)   & 5.46950    & \(\{1,\,2,\,3,\,4,\,5,\,6,\,8,\,9,\,10\}\)   & 3.93647    \\
10 & \(\{1,\,2,\,3,\,4,\,5,\,6,\,7,\,8,\,9,\,10\}\) & 7.56130     & \(\{1,\,2,\,3,\,4,\,5,\,6,\,7,\,8,\,9,\,10\}\) & 4.46975    \\
\hline
\end{tabular}
\caption{Performance evaluation of the distorted greedy algorithm.}
\label{tab:distgrdy_dist2stat}
\end{table}

\begin{figure}[H]
\centering
\begin{subfigure}[b]{0.47\linewidth}
    \centering
    \includegraphics[width=\linewidth]{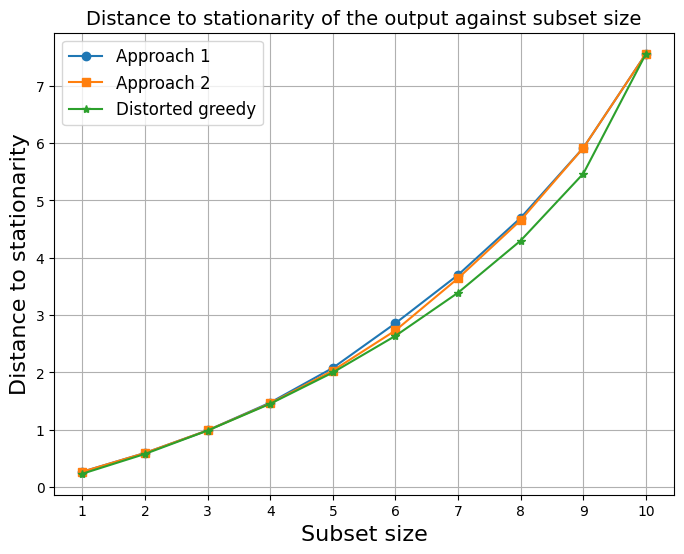}
    \caption{Bernoulli--Laplace level model}
    \label{fig:bg_BLLM_dist2stat}
\end{subfigure}
\begin{subfigure}[b]{0.48\linewidth}
    \centering
    \includegraphics[width=\linewidth]{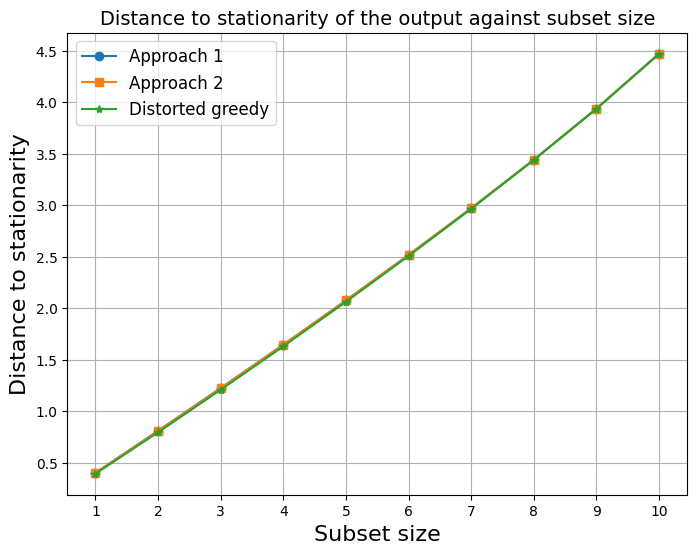}
    \caption{Curie--Weiss model}
    \label{fig:bg_CW_dist2stat}
\end{subfigure}
\caption{Distance to stationarity of the output against subset size.}
\label{fig:bg_dist2stat}
\end{figure}

We then report the numerical experiment results in Section~\ref{sec:gen_dist2stat}, see Table~\ref{tab:gen_distgrdy_dist2stat} and Figure~\ref{fig:gen_dist2stat}. Note that since the stationary distributions of the Bernoulli--Laplace level model (see Section~\ref{sec:BLLM}) and the Curie--Weiss model (see Section~\ref{sec:curie_weiss}) are not of product form, these simulations are heuristic in nature, \textcolor{black}{where} Corollary~\ref{cor:gen_distgrdy_dist2stat} does not provide a theoretical guarantee in this setting.

\begin{table}[H]
\centering
\begin{tabular}{|c|cccc|cccc|}
\hline
 & \multicolumn{4}{c|}{\textbf{Bernoulli--Laplace level model}} & \multicolumn{4}{c|}{\textbf{Curie--Weiss model}} \\
\hline
\(m\) & \(S_{m,1}\) & \(S_{m,2}\) & \(S_{m,3}\) & Value & \(S_{m,1}\) & \(S_{m,2}\) & \(S_{m,3}\) & Value \\
\hline
1  & \(\emptyset\)    & \(\emptyset\)    & \(\{10\}\)       & 0.23191  & \(\{1\}\)       & \(\emptyset\)   & \(\emptyset\)    & 0.39436 \\
2  & \(\emptyset\)    & \(\{7\}\)        & \(\{10\}\)       & 0.48566  & \(\{1\}\)       & \(\emptyset\)   & \(\{10\}\)       & 0.78871 \\
3  & \(\{4\}\)        & \(\{7\}\)        & \(\{10\}\)       & 0.74787  & \(\{1\}\)       & \(\{7\}\)      & \(\{10\}\)       & 1.19100 \\
4  & \(\{3,4\}\)      & \(\{7\}\)        & \(\{10\}\)       & 1.07820  & \(\{1\}\)       & \(\{7\}\)      & \(\{9,10\}\)    & 1.59492 \\
5  & \(\{3,4\}\)      & \(\{5,7\}\)      & \(\{10\}\)       & 1.41218  & \(\{1,2\}\)     & \(\{7\}\)      & \(\{9,10\}\)    & 1.99886 \\
6  & \(\{3,4\}\)      & \(\{5,7\}\)      & \(\{8,10\}\)     & 1.76157  & \(\{1,2\}\)     & \(\{6,7\}\)    & \(\{9,10\}\)    & 2.40381 \\
7  & \(\{1,3,4\}\)    & \(\{5,7\}\)      & \(\{8,10\}\)     & 2.15778  & \(\{1,2\}\)     & \(\{5,6,7\}\)  & \(\{9,10\}\)    & 2.81582 \\
8  & \(\{1,3,4\}\)    & \(\{5,6,7\}\)    & \(\{8,10\}\)     & 2.56632  & \(\{1,2,3\}\)   & \(\{5,6,7\}\)  & \(\{9,10\}\)    & 3.22828 \\
9  & \(\{1,3,4\}\)    & \(\{5,6,7\}\)    & \(\{8,9,10\}\)   & 3.02745  & \(\{1,2,3\}\)   & \(\{5,6,7\}\)  & \(\{8,9,10\}\)   & 3.64075 \\
10 & \(\{1,2,3,4\}\)  & \(\{5,6,7\}\)    & \(\{8,9,10\}\)   & 3.49326  & \(\{1,2,3,4\}\) & \(\{5,6,7\}\)  & \(\{8,9,10\}\)   & 4.06242 \\
\hline
\end{tabular}
\caption{Performance evaluation of Algorithm~\ref{alg:generalized_distorted_greedy}. ``Value" refers to $D(\otimes_{i=1}^3 P^{(S_{m, i})} \| \otimes_{i=1}^3 \Pi^{(S_{m, i})})$.}
\label{tab:gen_distgrdy_dist2stat}
\end{table}

\begin{figure}[H]
\centering
\begin{subfigure}[b]{0.48\linewidth}
    \centering
    \includegraphics[width=\linewidth]{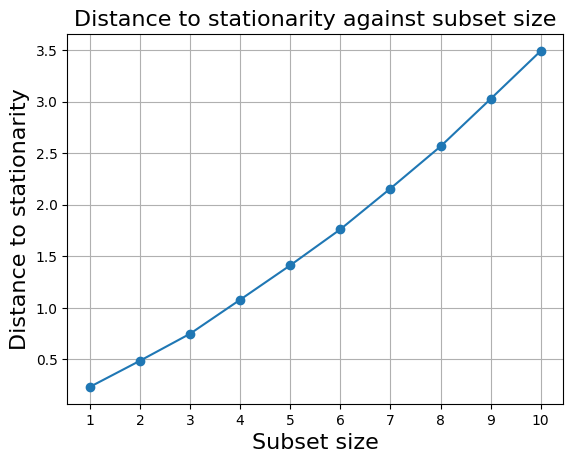}
    \caption{Bernoulli--Laplace level model}
    \label{fig:gen_distgrdy_BLLM_dist2stat}
\end{subfigure}
\begin{subfigure}[b]{0.48\linewidth}
    \centering
    \includegraphics[width=\linewidth]{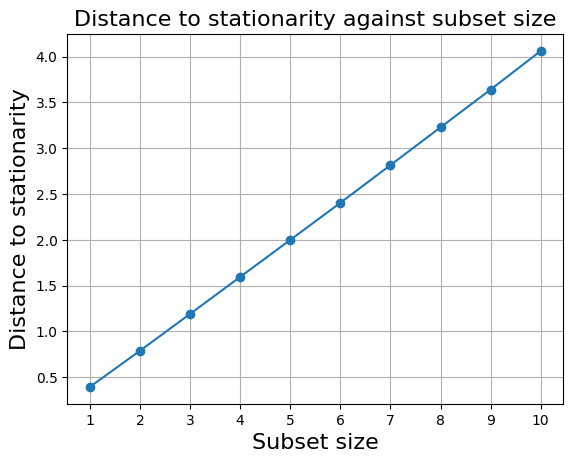}
    \caption{Curie--Weiss model}
    \label{fig:gen_distgrdy_CW_dist2stat}
\end{subfigure}
\caption{Performance evaluation of the generalized distorted greedy algorithm.}
\label{fig:gen_dist2stat}
\end{figure}

We proceed to present the numerical experiment results in Section~\ref{sec:dist2stat_c} and Section~\ref{sec:gen_dist2stat_c} (see Table~\ref{tab:greedy_dist2stat_c}, Table~\ref{tab:gen_distgrdy_dist2stat_c}, and Figure~\ref{fig:dist2stat_c}). Note that since the stationary distribution $\pi$ of both models is not of product form, \textcolor{black}{the application of greedy algorithm is thus a heuristic in this context, and} we do not have the $(1 - e^{-1})$-approximation guarantee.

\begin{table}[H]
\centering
\begin{tabular}{|c|cc|cc|}
\hline
 & \multicolumn{2}{c|}{\textbf{Bernoulli--Laplace level model}} & \multicolumn{2}{c|}{\textbf{Curie--Weiss model}} \\
\hline
\(m\) & \textbf{Subset \(S_m\)} & $D(P^{(-S_m)} \| \Pi^{(-S_m)})$ & \textbf{Subset \(S_m\)} & $D(P^{(-S_m)} \| \Pi^{(-S_m)})$ \\
\hline
1  & \(\{9\}\)                           & 5.46950    & \(\{10\}\)                          & 3.93568  \\
2  & \(\{9,\,10\}\)                       & 4.30094    & \(\{9,\,10\}\)                       & 3.43908   \\
3  & \(\{8,\,9,\,10\}\)                    & 3.39168    & \(\{8,\,9,\,10\}\)                    & 2.96487  \\
4  & \(\{7,\,8,\,9,\,10\}\)                & 2.63821   & \(\{7,\,8,\,9,\,10\}\)                & 2.507645  \\
5  & \(\{6,\,7,\,8,\,9,\,10\}\)             & 1.99871   & \(\{6,\,7,\,8,\,9,\,10\}\)             & 2.06420  \\
6  & \(\{4,\,6,\,7,\,8,\,9,\,10\}\)         & 1.45314   & \(\{5,\,6,\,7,\,8,\,9,\,10\}\)         & 1.63242  \\
7  & \(\{3,\,4,\,6,\,7,\,8,\,9,\,10\}\)      & 0.98630   & \(\{4,\,5,\,6,\,7,\,8,\,9,\,10\}\)      & 1.21075  \\
8  & \(\{1,\,3,\,4,\,6,\,7,\,8,\,9,\,10\}\)   & 0.58961   & \(\{3,\,4,\,5,\,6,\,7,\,8,\,9,\,10\}\)   & 0.79828   \\
9  & \(\{1,\,2,\,3,\,4,\,6,\,7,\,8,\,9,\,10\}\) & 0.25830  & \(\{2,\,3,\,4,\,5,\,6,\,7,\,8,\,9,\,10\}\) & 0.39435 \\
10 & \(\{1,\,2,\,3,\,4,\,5,\,6,\,7,\,8,\,9,\,10\}\)& 0.00000                    & \(\{1,\,2,\,3,\,4,\,5,\,6,\,7,\,8,\,9,\,10\}\)& 0.00000 \\
\hline
\end{tabular}
\caption{Performance evaluation of the greedy algorithm.}
\label{tab:greedy_dist2stat_c}
\end{table}

\begin{table}[H]
\centering
\begin{tabular}{|c|cccc|cccc|}
\hline
 & \multicolumn{4}{c|}{\textbf{Bernoulli--Laplace level model}} & \multicolumn{4}{c|}{\textbf{Curie--Weiss model}} \\
\hline
\(m\) & \(S_{m,1}\) & \(S_{m,2}\) & \(S_{m,3}\) & Value & \(S_{m,1}\) & \(S_{m,2}\) & \(S_{m,3}\) & Value \\
\hline
1  & \(\{4\}\)           & \(\emptyset\)      & \(\emptyset\)      & 3.02668  & \(\{4\}\)           & \(\emptyset\)      & \(\emptyset\)      & 3.64075 \\
2  & \(\{4\}\)           & \(\emptyset\)      & \(\{9\}\)         & 2.56554  & \(\{4\}\)           & \(\emptyset\)      & \(\{8\}\)         & 3.22828 \\
3  & \(\{4\}\)           & \(\{6\}\)         & \(\{9\}\)         & 2.15700  & \(\{3,4\}\)         & \(\emptyset\)      & \(\{8\}\)         & 2.81582 \\
4  & \(\{1,4\}\)         & \(\{6\}\)         & \(\{9\}\)         & 1.76235  & \(\{3,4\}\)         & \(\{5\}\)         & \(\{8\}\)         & 2.40381 \\
5  & \(\{1,4\}\)         & \(\{6\}\)         & \(\{8,9\}\)       & 1.41297  & \(\{3,4\}\)         & \(\{5,6\}\)       & \(\{8\}\)         & 1.99886 \\
6  & \(\{1,4\}\)         & \(\{5,6\}\)       & \(\{8,9\}\)       & 1.07899  & \(\{2,3,4\}\)       & \(\{5,6\}\)       & \(\{8\}\)         & 1.59492 \\
7  & \(\{1,2,4\}\)       & \(\{5,6\}\)       & \(\{8,9\}\)       & 0.74955  & \(\{2,3,4\}\)       & \(\{5,6\}\)       & \(\{8,9\}\)       & 1.19099 \\
8  & \(\{1,2,3,4\}\)     & \(\{5,6\}\)       & \(\{8,9\}\)       & 0.48566  & \(\{2,3,4\}\)       & \(\{5,6,7\}\)     & \(\{8,9\}\)       & 0.78871 \\
9  & \(\{1,2,3,4\}\)     & \(\{5,6,7\}\)     & \(\{8,9\}\)       & 0.23191  & \(\{2,3,4\}\)       & \(\{5,6,7\}\)     & \(\{8,9,10\}\)    & 0.39436 \\
10 & \(\{1,2,3,4\}\)     & \(\{5,6,7\}\)     & \(\{8,9,10\}\)    & 0.00000  & \(\{1,2,3,4\}\)     & \(\{5,6,7\}\)     & \(\{8,9,10\}\)    & 0.00000 \\
\hline
\end{tabular}
\caption{Performance evaluation of Algorithm~\ref{alg:generalized_distorted_greedy}. ``Value" refers to $D(\otimes_{i=1}^3 P^{(V_i \backslash S_{m, i})} \| \otimes_{i=1}^3 \Pi^{(V_i \backslash S_{m, i})})$.}
\label{tab:gen_distgrdy_dist2stat_c}
\end{table}

\begin{figure}[H]
\centering
\begin{subfigure}[b]{0.48\linewidth}
    \centering
    \includegraphics[width=\linewidth]{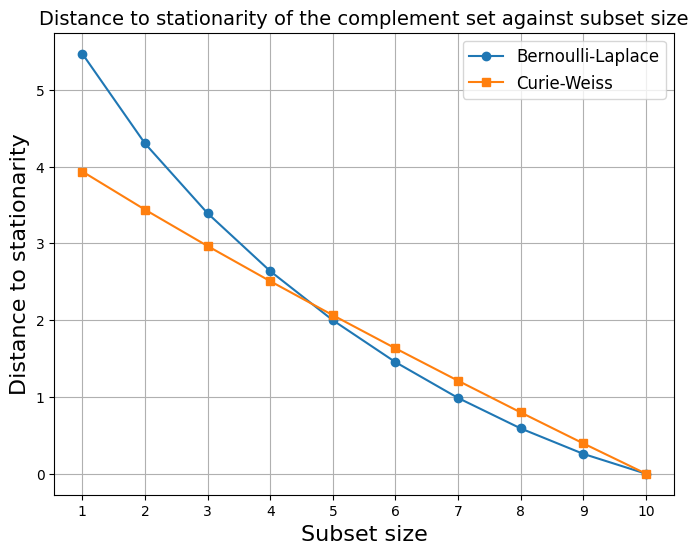}
    \caption{Greedy}
    \label{fig:greedy_dist2stat_c}
\end{subfigure}
\begin{subfigure}[b]{0.49\linewidth}
    \centering
    \includegraphics[width=\linewidth]{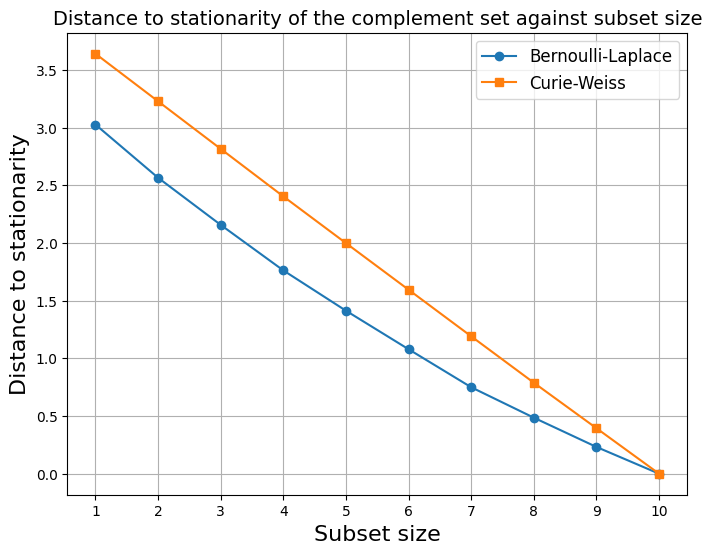}
    \caption{Algorithm~\ref{alg:generalized_distorted_greedy}}
    \label{fig:gen_distgrdy_dist2stat_c}
\end{subfigure}
\caption{Distance to stationarity of the complement set against subset size.}
\label{fig:dist2stat_c}
\end{figure}

\subsection{Experiment results of Section~\ref{sec:dist2fact_fixed}}
We perform Algorithm~\ref{alg:batch_greedy} with the following configuration: $l = \lceil \frac{m}{2} \rceil$, $q_i = 2$ for $i \in \llbracket l-1 \rrbracket$; $q_l = 2$ if $m$ is even, $q_l = 1$ if $m$ is odd. We choose the fixed subset as $W = \{1, 2, 3\}$. The performance of the batch greedy algorithm on the two models is shown in Table~\ref{tab:batch_greedy_dist2fact_fixed} and Figure~\ref{fig:batch_greedy_dist2fact_fixed}.

\begin{table}[H]
\centering
\begin{tabular}{|c|cc|cc|}
\hline
 & \multicolumn{2}{c|}{\textbf{Bernoulli--Laplace level model}} & \multicolumn{2}{c|}{\textbf{Curie--Weiss model}} \\
\hline
\(m\) & Subset $S_l$ & $D(P^{(W \cup S_l)} \| P^{(W)} \otimes P^{(S_l)})$ & Subset $S_l$ & $D(P^{(W \cup S_l)} \| P^{(W)} \otimes P^{(S_l)})$ \\
\hline
1 & \(\{10\}\) & 0.14671 & \(\{4\}\) & 0.02751 \\
2 & \(\{9,10\}\) & 0.26354 & \(\{4,10\}\) & 0.05651 \\
3 & \(\{8,9,10\}\) & 0.37787 & \(\{4,5,10\}\) & 0.08919 \\
4 & \(\{7,8,9,10\}\) & 0.49198 & \(\{4,5,9,10\}\) & 0.12616 \\
5 & \(\{6,7,8,9,10\}\) & 0.61908 & \(\{4,5,6,9,10\}\) & 0.17028 \\
6 & \(\{5,6,7,8,9,10\}\) & 0.79889 & \(\{4,5,6,8,9,10\}\) & 0.22527 \\
7 & \(\{4,5,6,7,8,9,10\}\) & 1.06993 & \(\{4,5,6,7,8,9,10\}\) & 0.30491 \\
\hline
\end{tabular}
\caption{Performance evaluation of the batch greedy algorithm.}
\label{tab:batch_greedy_dist2fact_fixed}
\end{table}

\begin{figure}[H]
    \centering
    \includegraphics[width=0.5\linewidth]{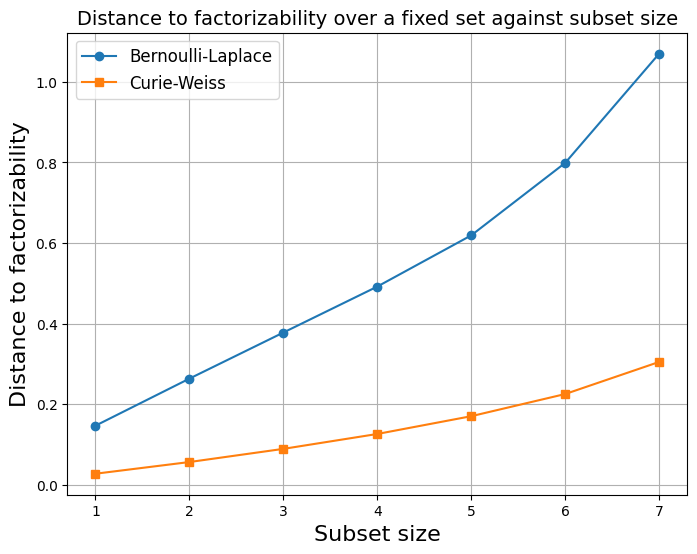}
    \caption{Performance evaluation of the batch greedy algorithm.}
    \label{fig:batch_greedy_dist2fact_fixed}
\end{figure}

\section*{Acknowledgements}
\textcolor{black}{We thank the careful reading and constructive comments of three reviewers that have significantly improved the quality of the manuscript.
Michael Choi acknowledges the financial support of the projects A-8001061-00-00, NUSREC-HPC-00001, NUSREC-CLD-00001, A-0000178-02-00 and A-8003574-00-00 at National University of Singapore. }

\section*{Data availability}

No data was used for the research described in the article.

\section*{Declarations}

\textbf{Conflict of interests} The authors have no relevant financial or non-financial interests to disclose.

\bibliographystyle{plain}
\bibliography{submodular}
\end{document}